\newtheorem{definition}{Definition}
\newtheorem{theorem}{Theorem}
\newtheorem{example}{Example}
\begin{document}
	
	\begin{titlepage}
		\Large
		\begin{center}
			{\bf \Huge
			QSR-Dissipativity and Passivity Analysis of Event-Triggered Networked Control Cyber-Physical Systems
			}
		\end{center}
		
		\begin{center}
				Technical Report of the ISIS Group\\
				at the University of Notre Dame\\
				ISIS-2016-002\\ 
				February 2016 
		\end{center}
		\vspace{1.5in}
		
		\begin{center}
			
			Arash Rahnama$^1$, Meng Xia$^1$, and Panos J. Antsaklis$^1$\\
			$^1$Department of Electrical Engineering\\
			University of Notre Dame\\
			Notre Dame, IN 46556\\
		\end{center}
		\vspace{1in}
		\begin{center}
			{\bf \Large Interdisciplinary Studies in Intelligent Systems}
		\end{center}
		\vspace{0.5in}
		\begin{center}
			{\textbf {Acknowledgements}    The support of the National Science Foundation under the CPS Grant No. CNS-1035655 and CNS-1446288 is gratefully acknowledged.
			}
		\end{center}
	\end{titlepage}

	\begin{abstract}
		Input feed-forward output feedback passive (IF-OFP) systems define a great number of dynamical systems. In this report, we show that dissipativity and passivity-based control combined with event-triggered networked control systems (NCS) provide a powerful platform for the design of cyber-physical systems (CPS). We propose \emph{QSR-dissipativity}, passivity and $L_2$-stability conditions for an event-triggered networked control system in three cases where: (i) an input-output event-triggering sampler condition is located on the plant's output side, (ii) an input-output event-triggering sampler condition is located on controller's output side, (iii)  input-output event-triggering sampler conditions are located on the outputs of both the plant and controller. We will show that this leads to a large decrease in communicational load amongst sub-units in networked control structures. We show that passivity and stability conditions depend on passivity levels for the plant and controller. Our results also illustrate the trade-off among passivity levels, stability, and system's dependence on the rate of communication  between the plant and controller.
	\end{abstract}
	\clearpage
	\section{Introduction}
	
	While classical control relies on the study of interconnected dynamical systems with dedicated communicational links, networked control systems rely on interconnected systems that communicate over channels. Hence, networked Control Systems' lie at the intersection of control and communication theory and are greatly useful for the design of Cyber-Physical Systems (CPS). As a form of spatially distributed systems consisting of many sub-units, NCS support the communication between sensors, actuators, and controllers through a shared communication network 	(figure \ref{fig:NCS}). This means that under NCS framework, sensors and actuators attached to the plant communicate with a remote controller over a multi-purpose shared network resulting in a flexible architecture with a reducing cost in installation and maintenance. In the recent years, and given the modern technological advances including the production of cheap, small, and efficient power processors with great sensing, information processing, and communication capabilities, NCS have been attracting a significant amount of interest in academic research and industry replacing the traditional control systems that rely on dedicated connections between sensor, controllers, and actuators. Due to its low cost of installation, flexibility in design, and ease of maintenance, NCS are identified as one the promising future direction for control \cite{murray2003future,baillieul2007control}, and have been finding application in a broad range of areas such as unmanned aerial vehicles \cite{seiler2005h}, remote surgery \cite{meng2004remote}, mobile sensor networks \cite{ogren2004cooperative}, and haptics collaboration over the Internet\cite{ogren2004cooperative,hikichi2002evaluation}.  A survey on recent developments in this field can be found in \cite{hespanha2007survey}.
	
	However, current NCS designs suffer from several challenges that are raised due to the innate limitations in communication networks such as sampling, quantization, packet dropouts, and delays that are resulted from continuous signals being encoded in a digital format, transmitted over the channels, and then being decoded at the receiving end. The delay in this process can be highly variable due to variant network access times and transmission delays. It is also possible that data may be lost while in transition through networks. Some results in the literature dealing with the problem of delays propose upper bounds for allowable delays in NCS called the maximum allowable transfer interval (MATI) \cite{walsh2002stability}, some of other possible solutions are given in \cite{branicky2000stability,lin2003robust}. Additionally, packet dropouts are usually modeled as stochastic incidents such as Bernoulli process \cite{sinopoli2004kalman} or finite-state Markov chains \cite{smith2003estimation}. Ways of dealing with quantization relay on dynamical and static quantization methods. A static quantizer is memoryless with fixed quantization levels, and dynamic quantizers are more complicated and use memory to adopt their quantization levels \cite{brockett2000quantized,fu2005sector}. Other issues related to NCS include security and safety of these interconnections. Additionally, most results in the literature focus on stability, and issues related to performance have been largely overlooked. Even most works on stability rely on worst case scenarios leading to rather conservative results.  Moreover, any communication network can only carry a finite amount of information per unit of time. This puts significant amount of constraint on the operation of NCS so any design that would decrease the plants' reliance on constant communication with the controllers is of great interest.
	
     In contrast to the classic periodical sensing and actuation control \cite{franklin1994feedback}, under an event-based framework, information between the plant and controller is only exchanged when it is necessary. This usually happens when a certain controlled value in the system deviates from its desired value for larger than a certain threshold. In other words, the information is exchanged only when something "significant" happens, and control is not executed unless it is required. Under this framework, one can obtain most control objectives by an open loop controller, while uncertainty is inevitable in real systems, a close-loop event-based framework can robustly deal with these uncertainties. Event-based control over networks has regained research interest since it creates a better balance between control performance and communication, and computational load  compared to the time-based counterpart \cite{aastrom1999comparison}. This shows that event-based control has a great potential for decreasing the bandwidth requirements for the network \cite{henningsson2006event}. This motivates the development of various event-based networked control schemes  \cite{donkers2010output,heemels2015event,tolic2015input,molin2010optimal}. A comprehensive survey on event-triggered control is given in \cite{lemmon2010event}.

 Overall, the shift of control technology to NCS motivates us to consider control and communication in a unified way. The theory of dissipativity, and QSR-dissipativity and passivity in particular can be used as a unifying force in the design of NCS. Passivity and dissipativity encompass the energy consumption characterizations of a dynamical system. Passivity is preserved under parallel and feedback interconnections \cite{bao2007process}. Passivity also implies stability under mild assumptions \cite{bao2007process,khalil2002nonlinear} making them a great alternative for designing compositional large-scale control systems. Additionally, the centralized control system approaches are not suitable for the design of CPS as they do not meet the basic requirements such as decentralization of control, integrated diagnostics, quick and easy maintenance and low cost. Motivated by the previous works of our colleagues in   \cite{yu2013event,zhu2014passivity,zhu2014passivityy}, and our recent work in regard to passivation of finite-gain nonlinear and linear systems \cite{xia2014passivity, xia2015guaranteeing}, and based on innate flexible qualities of passivity and event-triggered  networked control systems, we propose that combining both passivity and event-based control will lead to a suitable systematic asynchronous design framework for the control of large-scale cyber-physical systems.
 
 This report is organized as the following: Section \ref{sec:qsrpassivty} gives the preliminarily mathematical definitions on dissipativity and passivity. The problem statement being addressed is clearly stated in \ref{sec:prob}. Section \ref{sec:main} includes our main results with illustrative examples. Section \ref{sec:con} concludes the report and gives a brief overview of our future work.  
 
	\begin{figure}[!t]
		\centering
		\includegraphics[scale = 0.8]{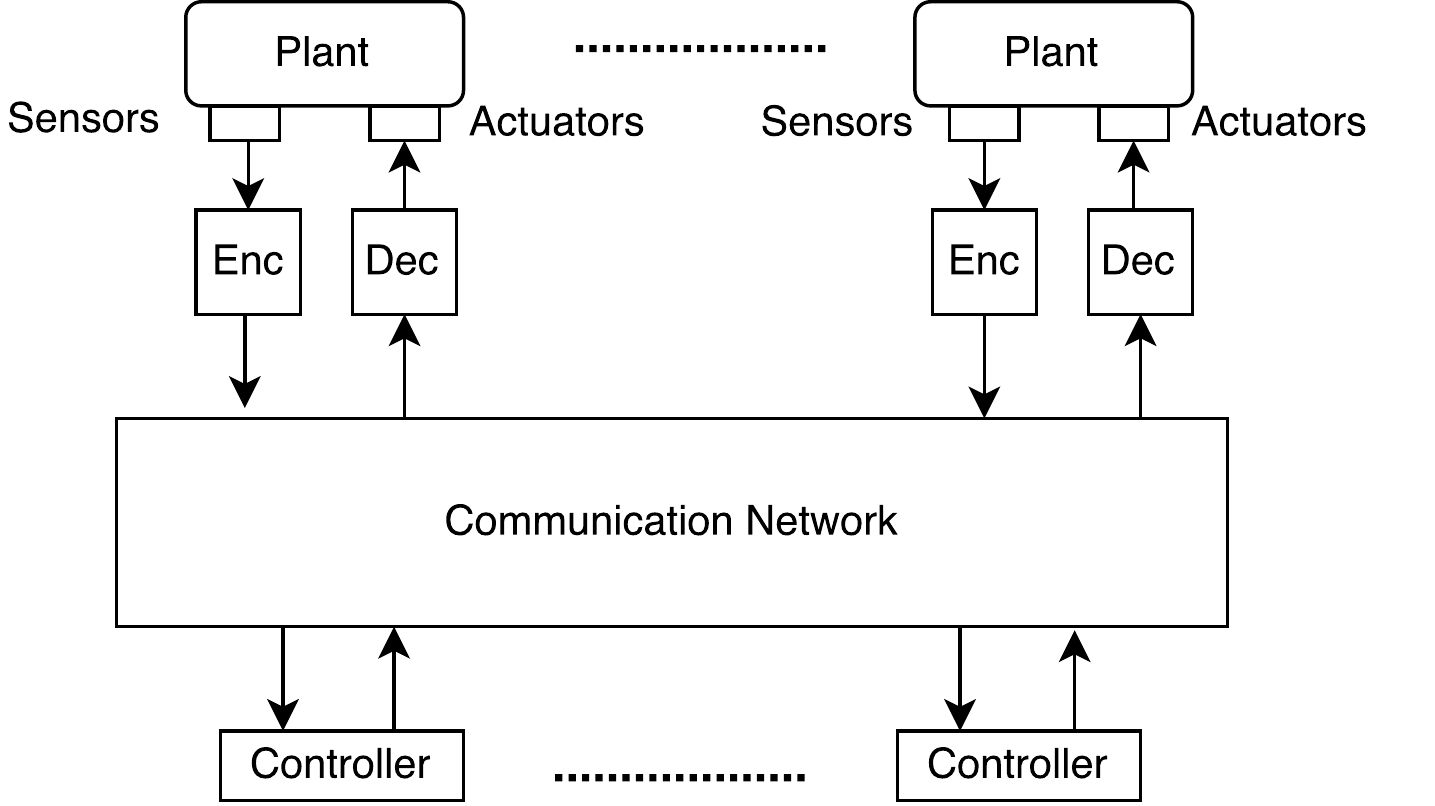}
		\caption{A Networked Control System Framework.}
		\label{fig:NCS}
	\end{figure}
	
	\clearpage
	\section{Mathematical Background}
	\label{sec:qsrpassivty}
	Consider the following linear or nonlinear dynamical system $G$, 
	\begin{align}
	\label{dynsys}
	G:
	\begin{cases}
	\dot{x}(t)= f(x(t),u(t) &  \\
	y(t)= h(x(t),u(t)), & 
	\end{cases}
	\end{align}
	where $x(t) \in X \subset R^n$, and $u(t) \in U \subset R^m$, and $y(t) \in Y \subset R^m$  are respectively the state, input and output of the system, and $X$, $U$ and $Y$ are respectively the state, input and output spaces. 
	Dissipativity and passivity are energy-based notions that characterize a dynamical system by its input/output behavior. A system is dissipative or passive if the increase in the system's stored energy is less than the entire energy supplied to it. The energy supplied to the system is defined by the storage function, and the energy stored in the system is defined by the storage function:
	\begin{definition}
		\label{diss}(\cite{willems1972dissipative}) System $G$ is dissipative with respect to the well-defined supply rate $\omega(u,y)$, if there exists a nonnegative storage function $V(x): X \to R^+$ such that for all $t_0$, $t_1$ where $t_1\geq t_0$, and all solutions $x(t)=x \in X$, $u(t)=u\in U$, $y(t)=y \in Y$:
		\begin{align}
		\label{eq:dissipative}
		V(x(t_1))-V(x(t_0)) \leq \int_{t_0}^{t_1}\omega(u(t),y(t)) dt 
		\end{align}
		is satisfied. If the storage function is differentiable, then \eqref{eq:dissipative} can be written as:
		\begin{align}
		\label{eq:dissipative2}
		\dot{V}(x(t)) \leq \omega(u(t),y(t)),~ \forall t 
		\end{align} 
	\end{definition}
	Accordingly, we call a system \emph{QSR-disspative} if it is dissipative with respect to the well-defined supply rate:
	\begin{align}
	\label{eq:QSR}
	\omega(u,y) = y^T Q y + 2 y^T S u + u^T R u, 
	\end{align}
	where $Q$, $R$, and $S$ are constant matrices of appropriate dimensions, and $Q$ and $R$ are symmetric \cite{hill1976stability}. 
	\begin{definition}(\cite{khalil2002nonlinear}) System $G$ is called finite gain $L_2$-stable, if there exists a positive-semi definite function $V(x): X \to R^+$, and a scalar constant $\gamma>0$ such that for all $u(t)=u\in U$, $y(t)=y \in Y$ and $t_1>0$
		\begin{align}
		\label{eq:l2}
		V(x(t_1))-V(x(0)) \leq \int_{0}^{t_1}(\gamma ^2 u^T(t)u(t)-y^T(t)y(t)) dt 
		\end{align}
	\end{definition}
	The relation between \emph{QSR-disspativity} and $L_2$-stability is well-established:
	\begin{theorem}(\cite{hill1976stability})
		If system $G$ is \emph{QSR-disspative} with $Q<0$, then it is $L_2$-stable.
	\end{theorem}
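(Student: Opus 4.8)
The plan is to reduce the claim to a single pointwise algebraic inequality on the supply rate and then integrate. By \emph{QSR-dissipativity} there is a nonnegative storage function $V_0$ satisfying $V_0(x(t_1)) - V_0(x(t_0)) \le \int_{t_0}^{t_1}\omega(u,y)\,dt$ with $\omega(u,y) = y^T Q y + 2 y^T S u + u^T R u$. Since the $L_2$-stability condition lets me choose both the storage function and the gain $\gamma$ freely, I would look for constants $\alpha>0$ and $\gamma>0$ such that the pointwise bound $\alpha\,\omega(u,y)\le \gamma^2 u^T u - y^T y$ holds for all $y,u$. Setting $V := \alpha V_0$ (still nonnegative, hence positive semidefinite) and multiplying the dissipation inequality by $\alpha$ would then reproduce \eqref{eq:l2} verbatim.

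Establishing that pointwise inequality is the crux. I would rewrite $\gamma^2 u^T u - y^T y - \alpha\,\omega(u,y)$ as the quadratic form
\[
\gamma^2 u^T u - y^T y - \alpha\,\omega(u,y)
= -\begin{pmatrix} y \\ u \end{pmatrix}^{T}
\begin{pmatrix} \alpha Q + I & \alpha S \\ \alpha S^{T} & \alpha R - \gamma^2 I \end{pmatrix}
\begin{pmatrix} y \\ u \end{pmatrix},
\]
so that the desired bound is equivalent to the displayed $2\times 2$ block matrix being negative semidefinite. Here the hypothesis $Q<0$ enters decisively: because $Q$ is negative definite we have $\lambda_{\max}(Q)<0$, so I may pick any $\alpha > -1/\lambda_{\max}(Q)>0$, which makes the leading block $\alpha Q + I$ negative definite and therefore invertible.

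With $\alpha Q + I<0$ fixed, I would complete the square in $y$, equivalently take the Schur complement about the top-left block. The $y$-dependent part is then manifestly nonpositive, and the residual quadratic form in $u$ has matrix $\alpha R - \gamma^2 I - \alpha^2 S^{T}(\alpha Q + I)^{-1} S$. Since $(\alpha Q + I)^{-1}<0$, the matrix $W := \alpha R - \alpha^2 S^{T}(\alpha Q + I)^{-1} S$ is a \emph{fixed} symmetric matrix, so choosing any $\gamma>0$ with $\gamma^2 \ge \lambda_{\max}(W)$ renders the residual negative semidefinite and completes the pointwise estimate. Integrating, $V(x(t_1)) - V(x(0)) \le \int_{0}^{t_1}(\gamma^2 u^T u - y^T y)\,dt$ with $V=\alpha V_0\ge 0$, which is exactly finite-gain $L_2$-stability.

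I expect the main obstacle to be conceptual rather than computational. The naive attempt keeps the original storage function ($\alpha=1$) and fails whenever $-1<\lambda_{\max}(Q)<0$, because then $Q+I$ is not negative semidefinite and no $\gamma$ can absorb the $-y^T y$ term; the key realization is that rescaling $V_0$ by $\alpha$ is both permissible under the definition and precisely what makes $\alpha Q + I<0$ and removes this obstruction. Once that is seen, the cross term $S$ is handled routinely by the Schur complement, and the only remaining care is to verify that $-1/\lambda_{\max}(Q)$ is well defined and positive, which follows directly from $Q<0$.
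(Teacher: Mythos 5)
Your proof is correct. Note that the paper does not prove this statement at all --- it is quoted from Hill and Moylan \cite{hill1976stability} as a known result --- so there is no in-paper argument to compare against; your write-up is a valid self-contained proof along the standard lines of the original reference. The two genuinely load-bearing observations are both present and handled properly: (i) the definition of finite-gain $L_2$-stability only asserts the \emph{existence} of some nonnegative storage function and some $\gamma>0$, so you are free to rescale $V_0$ by $\alpha$, and the choice $\alpha>-1/\lambda_{\max}(Q)$ (well defined since $Q<0$) is exactly what makes $\alpha Q+I<0$ and lets the $-y^Ty$ term be absorbed; (ii) once the leading block is negative definite, the Schur complement $\alpha R-\gamma^2 I-\alpha^2 S^T(\alpha Q+I)^{-1}S$ is a fixed symmetric matrix, so a sufficiently large $\gamma$ always exists. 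An equivalent, slightly more elementary route bounds $\omega(u,y)\le \lambda_{\max}(Q)\,y^Ty+2\|S\|\,\|y\|\,\|u\|+\|R\|\,\|u\|^2$ and applies Young's inequality to the cross term before rescaling, but your block-matrix version is cleaner and proves the same thing. The only cosmetic caveat is that if $\lambda_{\max}(W)\le 0$ you should still take $\gamma$ strictly positive, which your phrasing already permits.
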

	
	\begin{definition}(\cite{bao2007process}) As a special case of dissipativity, system $G$ is called passive, if there exists a nonnegative storage function $V(x): X \to R^+$ such that:
		\label{pass}
		\begin{align}
		\label{eq:passivity}
		V(x(t_1))-V(x(t_0)) \leq \int_{t_0}^{t_1} u^T(t)y(t) dt 
		\end{align}
		is satisfied for all $t_0$, $t_1$ where $t_1\geq t_0$, and all solutions $x(t)=x \in X$, $u(t)=u\in U$, $y(t)=y \in Y$. If the storage function is differentiable, then \eqref{eq:passivity} can be written as:
		\begin{align}
		\label{eq:passivity2}
		\dot{V}(x(t)) \leq u^T(t)y(t), \forall t 
		\end{align}
	\end{definition}
	Under certain conditions, passivity coincides with input/output stability, and for zero-state detectable dynamical systems, guarantees the stability of the origin \cite{khalil2002nonlinear}.
	\begin{definition}(\cite{khalil2002nonlinear}) System $G$ is considered to be Input Feed-forward Output Feedback Passive (IF-OFP), if it is dissipative with respect to the well-defined supply rate:
		\label{IFOF}
		\begin{align}
		\label{eq:IFOFP}
		\omega(u,y)=u^Ty-\rho y^Ty-\nu u^Tu, \forall t\geq0,
		\end{align}
		for some $\rho, \nu \in R$.
	\end{definition}
	
	IF-OFP property presents a more general form for the concept of passivity. Based on definition \ref{IFOF}, we can denote an IF-OFP system with IF-OFP($\nu$,$\rho$). $\nu$ is called the input passivity index and $\rho$ is called the output passivity index. Passivity indices are a means to measure the shortage and excess of passivity in dynamical systems \cite{khalil2002nonlinear}, and are useful in passivity-based analysis and control of systems \cite{yu2013event, xia2014passivity}. A positive value for either one of two passivity indices points to an excess in passivity; and a negative value for either of two passivity indices points to a shortage in passivity.  An excess of passivity in one system can compensate for the shortage of passivity in another system leading to a passive feedback or feed-forward interconnection \cite{hirche2012human}. Moreover, passivity indices can be useful for analyzing the performance of passive systems. Further, if only $\nu>0$, then the system is said to be \emph{input strictly passive} (ISP); if only $\rho>0$, then the system is said to be \emph{output strictly passive} (OSP). Similarly, if $\nu>0$ and $\rho>0$, then the system is said to be \emph{very strictly passive} (VSP). 
	\clearpage
\section{Problem Statement}
\label{sec:prob}
As shown in figures \ref{fig:ETNCSPS}, \ref{fig:ETNCSCS}, and \ref{fig:ETNCSPSCS}, we are exploring the interconnection of two IFOF passive systems. The main plant has passivity levels $\rho_p$, $\nu_p$, and the controller has passivity levels $\rho_c$, $\nu_c$. The indices can take positive or negative values indicating the extent that each sub-system is passive or non-passive. Most linear and nonlinear systems can be represented by this definition given that their passivity indices are known. The triggering mechanisms in the setups are representing the situations, in which new information is sent every time a violation of the triggering condition occurs. 

In figure \ref{fig:ETNCSPS}, an event-detector is located on the output of the plant to monitor the behavior of plant's output. An updated measure of $y_p$ is sent to the communication network when the error between the last information sent ($y_p(t_{k})$) and the current one  $e_p(t)=y_p(t)-y_p(t_k)$ (for $t \in [t_k, t_{k+1})$) exceeds a predetermined threshold established by the designer. A similar setup is also presented in figure \ref{fig:ETNCSCS} where the event-detector is located on the output of the controller, and an updated measure of $y_c$ is sent to the communication network when the error between the last information sent from the controller ($y_c(t_{k})$) and the current one  $e_c(t)=y_c(t)-y_c(t_k)$ (for $t \in [t_k, t_{k+1})$) exceeds a predetermined threshold value. In figure \ref{fig:ETNCSPSCS}, the event-detectors are located on both sides of the interconnection, this results in a great decrease in the amount of information required to be exchanged between sub-systems in order for them to maintain the desirable performance. In all cases the inputs to the controller or plant are held constant based on the last value received. The updating process is governed by the event-detectors. This structure is commonly used in literature to analyze the behavior of networked interconnections as it can capture different NCS configurations \cite{hespanha2007survey}. $w_1(t)$ denotes a reference input or an external disturbance on the plant side, and $w_2(t)$ denotes an external disturbance on the controller side. Additionally, we consider a simple triggering condition for each side:
\begin{align}
||e_p(t)||_2^2 > \delta_p||y_p(t)||_2^2 ~~~~ 0 < \delta_p \leq 1\\
||e_c(t)||_2^2 > \delta_c||y_c(t)||_2^2 ~~~~ 0 < \delta_c \leq 1
\end{align}

This will facilitate the design process and also make it easier for the designer to understand and analyze the trade-off between performance, finite-gain $L_2$-stability, \emph{QSR-disspativity}, passivity, and channel utilization, and to make design decisions accordingly. Another advantage of the above conditions is that we do not need the exact dynamical models for each sub-system before making design decisions, and that an access only to each sub-system's output is sufficient. 

In this paper, we will show \emph{QSR-disspativity}/passivity and finite-gain $L_2$-stability conditions for each interconnection under the triggering conditions mentioned above. We will seek to answer the following question: Given $G_p$, and $G_c$ how can we design an appropriate triggering interval so that we can efficiently utilize the band-limited communication channel meanwhile reaching a stable performance? We will show that the answer to this question depends on the passivity levels for each sub-unit, and the flexibility of our triggering conditions: passivity indices determine the communication rate by directly affecting the triggering conditions, and passivity has a direct relationship with performance, in the sense that systems that are more passive utilize the communication network less frequently meanwhile maintaining a stable performance.

\begin{figure}[!t]
	\centering
	\includegraphics[scale = 0.5]{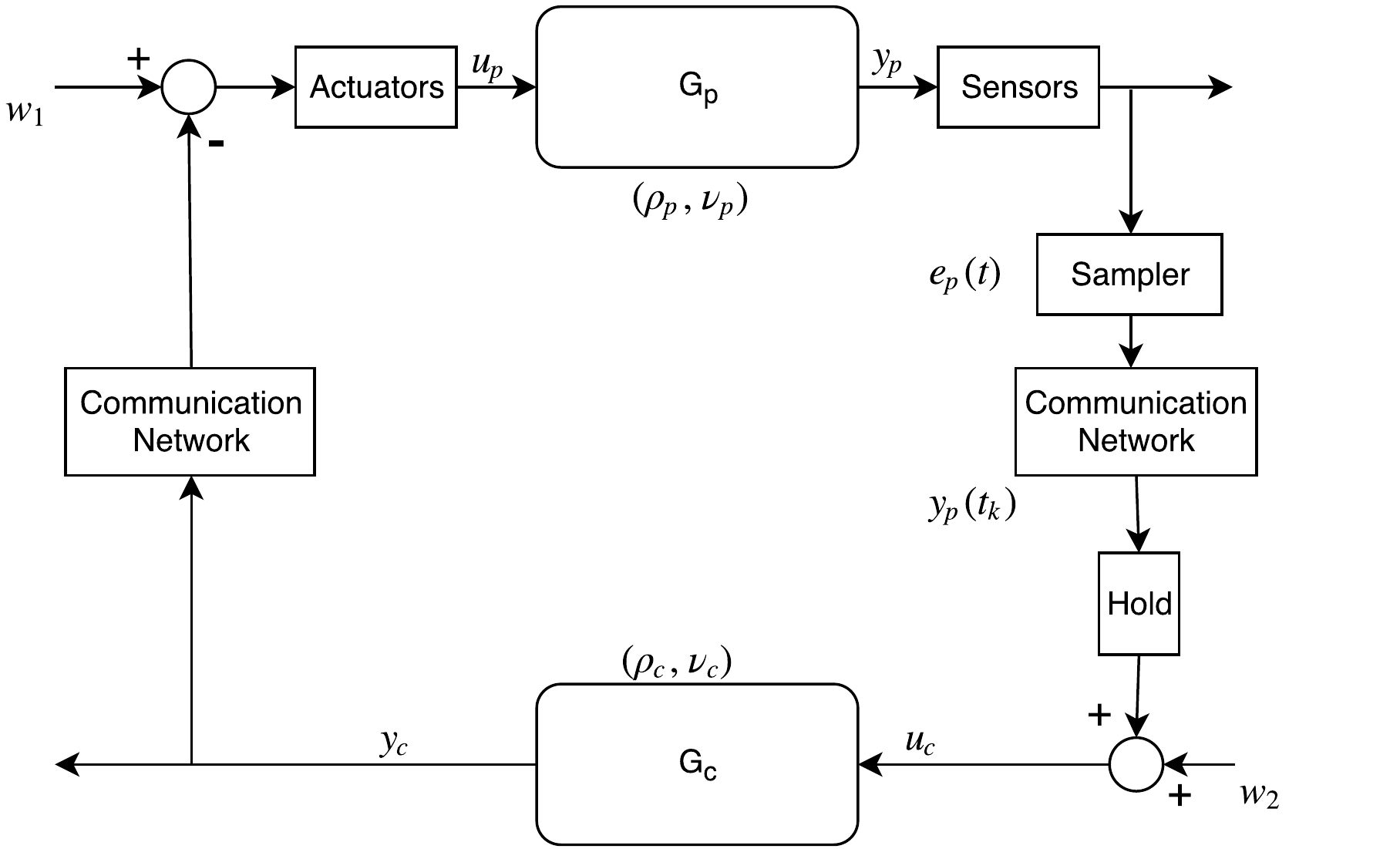}
	\caption{A Networked Control System Interconnection of two IF-OFP systems - the event-triggering condition on plant's output side.}
	\label{fig:ETNCSPS}
\end{figure}
\begin{figure}[!t]
	\centering
	\includegraphics[scale = 0.5]{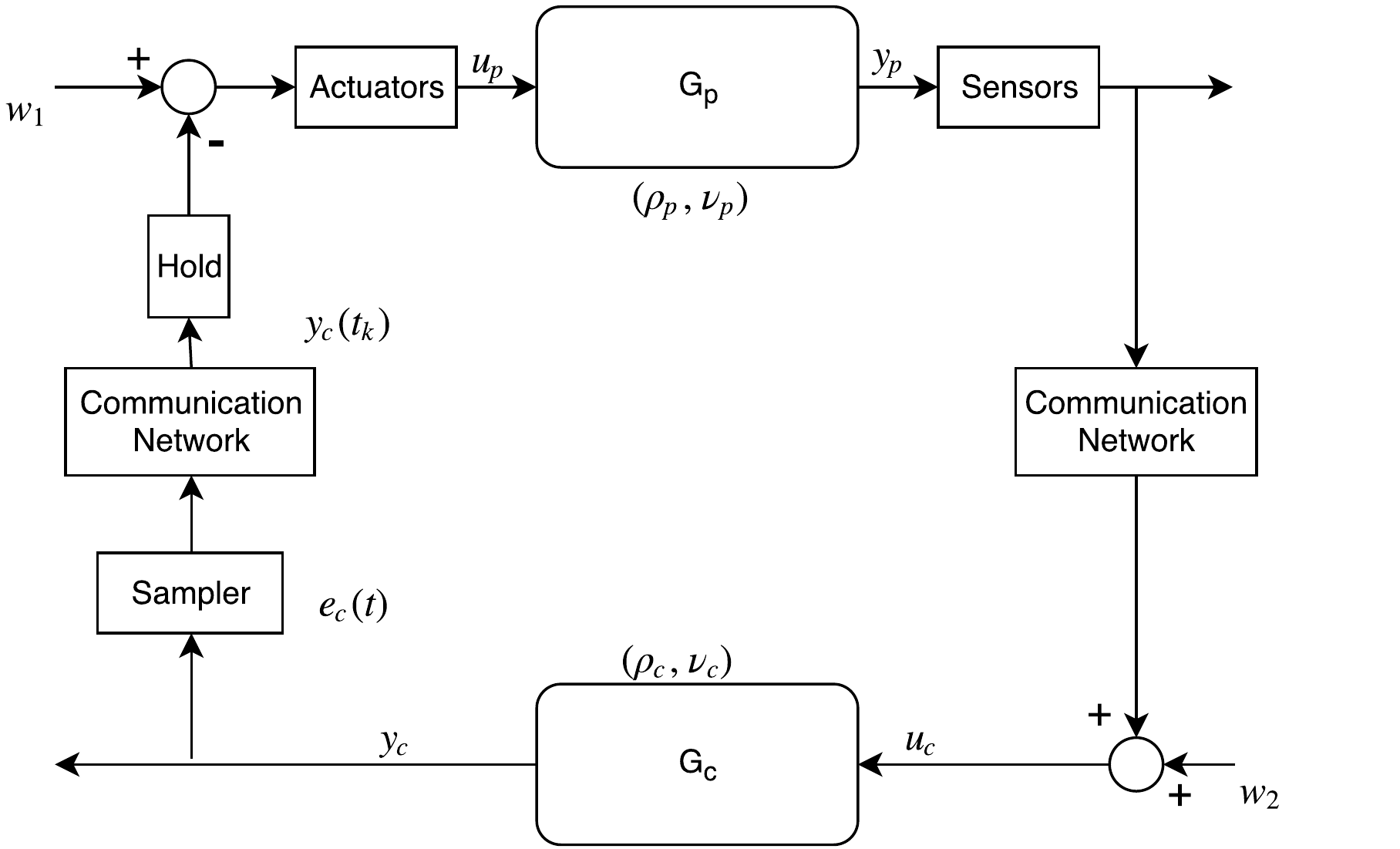}
	\caption{A Networked Control System Interconnection of two IF-OFP systems - the event-triggering condition on controller's output side.}
	\label{fig:ETNCSCS}
\end{figure}
\begin{figure}[!t]
	\centering
	\includegraphics[scale = 0.5]{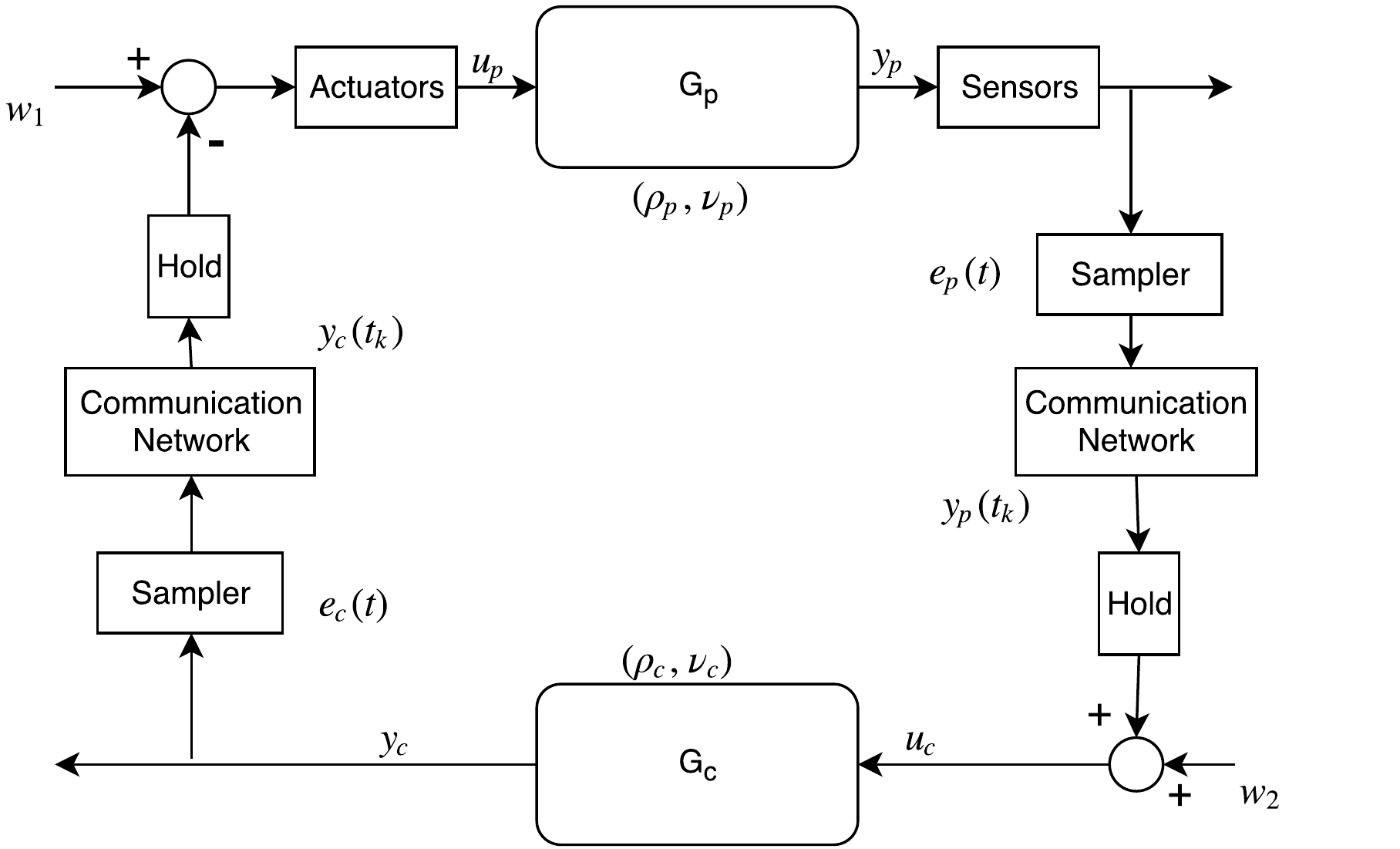}
	\caption{A Networked Control System Interconnection of two IF-OFP systems - the event-triggering conditions on plant's and controller's outputs.}
	\label{fig:ETNCSPSCS}
\end{figure}
\clearpage
\section{Main Results}
\label{sec:main}
\subsection{\emph{QSR-dissipativity} and Passivity Analysis - Event-triggering condition on the Plant's side - Figure \ref{fig:ETNCSPS})}
\subsubsection{\emph{QSR-dissipativity} for NCS (Event-triggering condition on Plant's Output - Figure \ref{fig:ETNCSPS}) }
\begin{theorem}
	Consider the feedback interconnection of two systems $G_p$ and $G_c$ in figure \ref{fig:ETNCSPS} with respective passivity indices of $\nu_p$, $\rho_p$, $\nu_c$ and $\rho_c$. If the event instance $t_k$ is explicitly determined by the triggering condition $||e_p(t)||_2^2>\delta||y_p(t)||_2^2$ where $\delta  \in(0,1]$, then the event-triggered networked control system is \emph{QSR-dissipative} with respect to the inputs $w(t)=\begin{bmatrix}
	w_1(t) \\ w_2(t)   
	\end{bmatrix}$, and the outputs
	$y(t)=\begin{bmatrix}
	y_p(t) \\ y_c(t)   
	\end{bmatrix}$, and satisfies the relation:
	\begin{align*}
		&\dot{V}(t) \leq w^T(t)Rw(t)+ 2w^T(t)Sy(t)+y^T(t)Qy(t)
	\end{align*}
	where,\\
	
	$S=\begin{bmatrix}
	\frac{1}{2}I & \nu_pI\\
	-\nu_cI & \frac{1}{2}I      
	\end{bmatrix}$, $R=\begin{bmatrix}
	-\nu_pI & 0I\\
	0I & -(\nu_c-|\nu_c|)I      
	\end{bmatrix}$, and $Q=\begin{bmatrix}
	-\beta(\nu_c)I & 0I\\
	0I & -(\rho_c+\nu_p-\frac{1}{4})I\end{bmatrix}$
	and:\\
	
	\[\beta(\nu_c) =
	\begin{cases}
	\rho_p-\nu_c-\delta(1+\nu_c)      & \quad \text{if } \nu_c > 0\\
	\rho_p-\delta       & \quad \text{if } \nu_c = 0\\
	\rho_p+2\nu_c-\delta(1-3\nu_c)  & \quad \text{if } \nu_c < 0\\
	\end{cases}
	\]
	
	Additionally, if $Q<0$ meaning $\beta(\nu_c)>0$ and $\rho_c+\nu_p-\frac{1}{4}>0$ then the interconnection is $L_2$-stable.
	
\end{theorem}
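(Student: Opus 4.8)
The plan is to build a single storage function $V = V_p + V_c$ from the two IF-OFP storage functions and add the dissipation inequalities. Since $G_p$ is IF-OFP$(\nu_p,\rho_p)$ and $G_c$ is IF-OFP$(\nu_c,\rho_c)$, assuming differentiable storage functions and summing the supply-rate inequalities of Definition \ref{IFOF} gives
\[
\dot V \le u_p^T y_p + u_c^T y_c - \rho_p\|y_p\|^2 - \rho_c\|y_c\|^2 - \nu_p\|u_p\|^2 - \nu_c\|u_c\|^2 .
\]
Next I would substitute the interconnection relations read off from Figure \ref{fig:ETNCSPS}: the controller output feeds back to the plant continuously, $u_p = w_1 - y_c$, while the controller only sees the \emph{sampled} plant output, $u_c = w_2 + \hat y_p$ with $\hat y_p = y_p(t_k) = y_p - e_p$ on the inter-event interval $[t_k,t_{k+1})$. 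Expanding the products, the coupling terms $-y_c^T y_p$ and $+y_p^T y_c$ cancel, and the $w_1$-dependent terms immediately produce the $-\nu_p\|w_1\|^2$, $w_1^T y_p$ and $2\nu_p w_1^T y_c$ entries of $R$ and $S$. What remains is a quadratic form in $y_p, y_c, w_2$ and the sampling error $e_p$, plus the single troublesome cross-term $-e_p^T y_c$.

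The second step is to absorb that cross-term by completing a square: the elementary identity $-e_p^T y_c - \tfrac14\|y_c\|^2 = -\|e_p+\tfrac12 y_c\|^2 + \|e_p\|^2 \le \|e_p\|^2$ simultaneously accounts for the $-\tfrac14$ in the $(2,2)$ block of $Q$ and trades the error–output coupling for a pure $\|e_p\|^2$ penalty. Keeping $-\nu_c\|u_c\|^2$ in the compact form $-\nu_c\|w_2+y_p-e_p\|^2$ and expanding then shows that the $\|w_2\|^2$ and $w_2^T y_p$ contributions already align with the $-2\nu_c$ off-diagonal entry of $S$, leaving something of the shape $(1-\nu_c)\|e_p\|^2 + (\cdots)\|y_p\|^2 - |\nu_c|\,\|w_2\|^2 + 2\nu_c w_2^T e_p + 2\nu_c y_p^T e_p$ that must be dominated by the target supply rate.

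The heart of the argument, and the step I expect to be the main obstacle, is the case analysis on the sign of $\nu_c$, since this is where both the $-(\nu_c-|\nu_c|)$ structure of $R$ and the three branches of $\beta(\nu_c)$ arise. In each regime I would bound the error cross-terms $2\nu_c w_2^T e_p$ and $2\nu_c y_p^T e_p$ by weighted Young inequalities chosen so that the spurious $\|w_2\|^2$ contribution is exactly cancelled (yielding $0$ when $\nu_c>0$ and the extra $-2\nu_c\|w_2\|^2$ when $\nu_c<0$), then collect all $\|e_p\|^2$ terms and invoke the triggering condition $\|e_p\|^2 \le \delta\|y_p\|^2$, which holds on $[t_k,t_{k+1})$ because an event fires precisely when it is violated. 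The delicate point is selecting the Young weights so that, after substituting the triggering bound, the leftover always reduces to a nonnegative multiple of $\|e_p\|^2-\delta\|y_p\|^2$ — for instance $(1+\nu_c)(\|e_p\|^2-\delta\|y_p\|^2)\le 0$ for $\nu_c>0$ and $(1-3\nu_c)(\|e_p\|^2-\delta\|y_p\|^2)\le 0$ for $\nu_c<0$ — while the net $\|y_p\|^2$ coefficient collapses to exactly $-\beta(\nu_c)$ in each case.

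Finally, the $L_2$-stability claim requires no new work: once QSR-dissipativity with the stated $Q,S,R$ is in hand, the matrix $Q=\mathrm{diag}\!\big(-\beta(\nu_c)I,\,-(\rho_c+\nu_p-\tfrac14)I\big)$ is negative definite exactly when $\beta(\nu_c)>0$ and $\rho_c+\nu_p-\tfrac14>0$, so the conclusion follows directly from Theorem 1 (QSR-dissipativity with $Q<0$ implies $L_2$-stability).
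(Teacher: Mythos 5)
Your proposal is correct and follows essentially the same route as the paper's proof: the summed storage function $V=V_p+V_c$, substitution of $u_p=w_1-y_c$ and $u_c=w_2+y_p-e_p$, the completion of squares on $-e_p^Ty_c$ producing the $\tfrac14\|y_c\|^2$ term, Young-type bounds on the $\nu_c$ cross-terms split by the sign of $\nu_c$, absorption of $\|e_p\|^2$ via the triggering bound $\|e_p\|^2\le\delta\|y_p\|^2$ on $[t_k,t_{k+1})$, and the final appeal to the QSR-with-$Q<0$ implies $L_2$-stability theorem. (Your Young weights for the $\nu_c>0$ branch in fact yield a slightly tighter $y_p$-coefficient than the paper's, which still implies the stated $\beta(\nu_c)$.)
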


\begin{proof} 
	\label{thm:NCSYpEp}
	Given the systems $G_p$ and $G_c$ with passivity indices $\nu_p$, $\rho_p$, $\nu_c$ and $\rho_c$, there exists $V_p(t)$ and $V_c(t)$ such that:
	
	\begin{align*}
		& \dot{V}_p(t) \leq u^T_p(t)y_p(t)-\nu_pu^T_p(t)u_p(t)-\rho_p y^T_p(t)y_p(t) 
		\\&\dot{V}_c(t) \leq u^T_c(t)y_c(t)-\nu_c u^T_c(t)u_c(t)-\rho_c y^T_c(t)y_c(t).\\
	\end{align*}
	
	Additionally, according to the setup portrayed in figure \ref{fig:ETNCSPS}, the following relationships stand for $t \in [t_k,t_{k+1})$:
	
	\begin{align*}
		& u_p(t)=w_1(t)-y_c(t)
		\\&e_p(t)=y_p(t)-y_p(t_k)
		\\&u_c(t)=w_2(t)+y_p(t_k)=w_2(t)+y_p(t)-e_p(t),
	\end{align*}
	
	we design the triggering condition based on the following rule ($||e_p(t)||_2^2>\delta||y_p(t)||_2^2 $):
	
	\begin{align*}
		& \langle e_p,e_p\rangle>\delta \langle y_p,y_p\rangle, ~ 0 < \delta \leq 1,
	\end{align*}

	We consider the following storage function for the interconnection:
	
	\begin{align*}
		& V(t) = V_p(t) + V_c(t),
	\end{align*} 
	
	as a results, we have:
	\begin{align*}
		& \dot{V}(t)=\dot{V}_p(t)+\dot{V}_c(t)\\&~~~~~~ \leq u^T_p(t)y_p(t)-\nu_pu^T_p(t)u_p(t)-\rho_p y^T_p(t)y_p(t)+u^T_c(t)y_c(t)-\nu_c u^T_c(t)u_c(t)-\rho_c y^T_c(t)y_c(t).
	\end{align*} 
	
	We know that $u_p(t)=w_1(t)-y_c(t)$, $u_c(t)=w_2(t)+y_p(t)-e_p(t)$, consequently for any $t \in [t_k,t_{k+1})$ we have:
	
	\begin{align*}
		&\dot{V}(t) \leq (w_1(t)-y_c(t))^T y_p(t)-\nu_p(w_1(t)-y_c(t))^T(w_1(t)-y_c(t))-\rho_p y^T_p(t)y_p(t)\\&+(w_2(t)+y_p(t)-e_p(t))^Ty_c(t)-\nu_c (w_2(t)+y_p(t)-e_p(t))^T(w_2(t)+y_p(t)-e_p(t))-\rho_c y^T_c(t)y_c(t)
		\\&= w_1^T(t)y_p(t)+w_2^T(t)y_c(t)-\nu_pw_1^T(t)w_1(t)-\nu_cw_2^T(t)w_2(t)-(\rho_p+\nu_c)y^T_p(t)y_p(t)-(\rho_c+\nu_p)y^T_c(t)y_c(t)\\&+2\nu_pw^T_1(t)y_c(t)-2\nu_cw^T_2(t)y_p(t)+2\nu_cy^T_p(t)e_p(t)+2\nu_cw^T_2(t)e_p(t)-e_p^T(t)y_c(t)-\nu_ce_p^T(t)e_p(t)
	\end{align*} 
	
	Given that $2\nu_cw^T_2(t)e_p(t) \leq |\nu_c|w_2^T(t)w_2(t)+|\nu_c|e_p^T(t)e_p(t)$ we have:
	
	$\dot{V}(t) \leq 2\begin{bmatrix}
	w^T_1(t) & w^T_2(t)      
	\end{bmatrix}\begin{bmatrix}
	\frac{1}{2} & \nu_p\\
	-\nu_c & \frac{1}{2}      
	\end{bmatrix}\begin{bmatrix}
	y_p(t) \\ y_c(t)      
	\end{bmatrix}+\begin{bmatrix}
	w^T_1(t) & w^T_2(t)      
	\end{bmatrix}\begin{bmatrix}
	-\nu_p & 0\\
	0 & |\nu_c|-\nu_c      
	\end{bmatrix}\begin{bmatrix}
	w_1(t) \\ w_2(t)      
	\end{bmatrix}\\
	+\begin{bmatrix}
	y^T_p(t) & y^T_c(t)      
	\end{bmatrix}\begin{bmatrix}
	-(\rho_p+\nu_c) & 0\\
	0 & -(\rho_c+\nu_p)     
	\end{bmatrix}\begin{bmatrix}
	y_p(t) \\ y_c(t)      
	\end{bmatrix}+2\nu_cy^T_p(t)e_p(t)+|\nu_c|e^T_p(t)e_p(t)-e_p^T(t)y_c(t)\\-\nu_ce_p^T(t)e_p(t)$\\
	
	Additionally we have: $-e_p^T(t)y_c(t)=-( e_p(t) + \frac{1}{2} y_c(t))^2+ e_p^T(t)e_p(t) + 
	\frac{1}{4} y_c^T(t)y_c(t)$ , and $e_p^T(t)e_p(t) \leq \delta y_p^T(t)y_p(t)$ so we have:

	$\dot{V}(t) \leq 2\begin{bmatrix}
	w^T_1(t) & w^T_2(t)      
	\end{bmatrix}\begin{bmatrix}
	\frac{1}{2} & \nu_p\\
	-\nu_c & \frac{1}{2}      
	\end{bmatrix}\begin{bmatrix}
	y_p(t) \\ y_c(t)      
	\end{bmatrix}+\begin{bmatrix}
	w^T_1(t) & w^T_2(t)      
	\end{bmatrix}\begin{bmatrix}
	-\nu_p & 0\\
	0 & |\nu_c|-\nu_c      
	\end{bmatrix}\begin{bmatrix}
	w_1(t) \\ w_2(t)      
	\end{bmatrix}\\
	+\begin{bmatrix}
	y^T_p(t) & y^T_c(t)      
	\end{bmatrix}\begin{bmatrix}
	-(\rho_p+\nu_c-\delta) & 0\\
	0 & -(\rho_c+\nu_p-\frac{1}{4})     
	\end{bmatrix}\begin{bmatrix}
	y_p(t) \\ y_c(t)      
	\end{bmatrix}+2\nu_cy^T_p(t)e_p(t)+|\nu_c|e^T_p(t)e_p(t)-\nu_ce_p^T(t)e_p(t)~~~(11)$\\

	if $\nu_c > 0$ then we have:\\ 
	
	$2\nu_cy^T_p(t)e_p(t)-\nu_ce^T_p(t)e_p(t)=-\begin{bmatrix}
	e_p(t) & y_p(t)      
	\end{bmatrix} M \begin{bmatrix}
	e_p(t) \\ y_p(t)      
	\end{bmatrix} + 2 \nu_c y_p^T(t)y_p(t)
	$   \\
	where $M=\begin{bmatrix}
	\nu_c & -\nu_c\\
	-\nu_c & 2\nu_c    
	\end{bmatrix}\geq0$, and $\nu_c e_p^T(t)e_p(t) \leq \delta \nu_c y_p^T(t)y_p(t)$ so we can simplify (11) further to have:
	
	$\dot{V}(t) \leq 2\begin{bmatrix}
	w^T_1(t) & w^T_2(t)      
	\end{bmatrix}\begin{bmatrix}
	\frac{1}{2} & \nu_p\\
	-\nu_c & \frac{1}{2}      
	\end{bmatrix}\begin{bmatrix}
	y_p(t) \\ y_c(t)      
	\end{bmatrix}+\begin{bmatrix}
	w^T_1(t) & w^T_2(t)      
	\end{bmatrix}\begin{bmatrix}
	-\nu_p & 0\\
	0 & |\nu_c|-\nu_c      
	\end{bmatrix}\begin{bmatrix}
	w_1(t) \\ w_2(t)      
	\end{bmatrix}\\
	+\begin{bmatrix}
	y^T_p(t) & y^T_c(t)      
	\end{bmatrix}\begin{bmatrix}
	-(\rho_p-\nu_c-\delta(1+\nu_c)) & 0\\
	0 & -(\rho_c+\nu_p-\frac{1}{4})     
	\end{bmatrix}\begin{bmatrix}
	y_p(t) \\ y_c(t)   
	\end{bmatrix}$
	
	if $\nu_c = 0$ then we have: \\
	
	$\dot{V}(t) \leq 2\begin{bmatrix}
	w^T_1(t) & w^T_2(t)      
	\end{bmatrix}\begin{bmatrix}
	\frac{1}{2} & \nu_p\\
	-\nu_c & \frac{1}{2}      
	\end{bmatrix}\begin{bmatrix}
	y_p(t) \\ y_c(t)      
	\end{bmatrix}+\begin{bmatrix}
	w^T_1(t) & w^T_2(t)      
	\end{bmatrix}\begin{bmatrix}
	-\nu_p & 0\\
	0 & |\nu_c|-\nu_c      
	\end{bmatrix}\begin{bmatrix}
	w_1(t) \\ w_2(t)      
	\end{bmatrix}\\
	+\begin{bmatrix}
	y^T_p(t) & y^T_c(t)      
	\end{bmatrix}\begin{bmatrix}
	-(\rho_p-\delta) & 0\\
	0 & -(\rho_c+\nu_p-\frac{1}{4})     
	\end{bmatrix}\begin{bmatrix}
	y_p(t) \\ y_c(t)   
	\end{bmatrix}$
	
	if $\nu_c < 0$, given that $2\nu_cy^T_p(t)e_p(t) \leq |\nu_c|y_p^T(t)y_p(t)+|\nu_c|e_p^T(t)e_p(t)$, $|\nu_c|e_p^T(t)e_p(t) \leq \delta|\nu_c| y_p^T(t)y_p(t)$ and $-\nu_c e_p^T(t)e_p(t) \leq \delta |\nu_c| y_p^T(t)y_p(t)$ then we have: \\
	
	$\dot{V}(t) \leq 2\begin{bmatrix}
	w^T_1(t) & w^T_2(t)      
	\end{bmatrix}\begin{bmatrix}
	\frac{1}{2} & \nu_p\\
	-\nu_c & \frac{1}{2}      
	\end{bmatrix}\begin{bmatrix}
	y_p(t) \\ y_c(t)      
	\end{bmatrix}+\begin{bmatrix}
	w^T_1(t) & w^T_2(t)      
	\end{bmatrix}\begin{bmatrix}
	-\nu_p & 0\\
	0 & |\nu_c|-\nu_c      
	\end{bmatrix}\begin{bmatrix}
	w_1(t) \\ w_2(t)      
	\end{bmatrix}\\
	+\begin{bmatrix}
	y^T_p(t) & y^T_c(t)      
	\end{bmatrix}\begin{bmatrix}
	-(\rho_p+2\nu_c-\delta(1-3\nu_c)) & 0\\
	0 & -(\rho_c+\nu_p-\frac{1}{4})     
	\end{bmatrix}\begin{bmatrix}
	y_p(t) \\ y_c(t)   
	\end{bmatrix}$
\end{proof}

Hence, we have shown that:

\begin{align*}
	&\dot{V}(t) \leq w^T(t)Rw(t)+2w^T(t)Sy(t)+y^T(t)Qy(t)
\end{align*}

where, $w(t)=\begin{bmatrix}
w_1(t) \\ w_2(t)   
\end{bmatrix}$,
$y(t)=\begin{bmatrix}
y_p(t) \\ y_c(t)   
\end{bmatrix}$ and:\\

$S=\begin{bmatrix}
\frac{1}{2}I & \nu_pI\\
-\nu_cI & \frac{1}{2}I      
\end{bmatrix}$, $R=\begin{bmatrix}
-\nu_pI & 0I\\
0I & -(\nu_c-|\nu_c|)I      
\end{bmatrix}$, and $Q=\begin{bmatrix}
-\beta(\nu_c)I & 0I\\
0I & -(\rho_c+\nu_p-\frac{1}{4})I\end{bmatrix}$

and:\\

\[ \beta(\nu_c) =
\begin{cases}
\rho_p-\nu_c-\delta(1+\nu_c)      & \quad \text{if } \nu_c > 0\\
\rho_p-\delta       & \quad \text{if } \nu_c = 0\\
\rho_p+2\nu_c-\delta(1-3\nu_c)  & \quad \text{if } \nu_c < 0\\
\end{cases}
\]
\subsubsection{Simulation Examples for \emph{QSR-dissipativity} for NCS (Event-triggering condition on the Plant's Output - Figure \ref{fig:ETNCSPS})}
\begin{example}
	The plant in figure \ref{fig:NCSYpEpEX1} is defined by the following model:	
	
	\begin{figure}[!t]
		\centering
		\includegraphics[scale = .65]{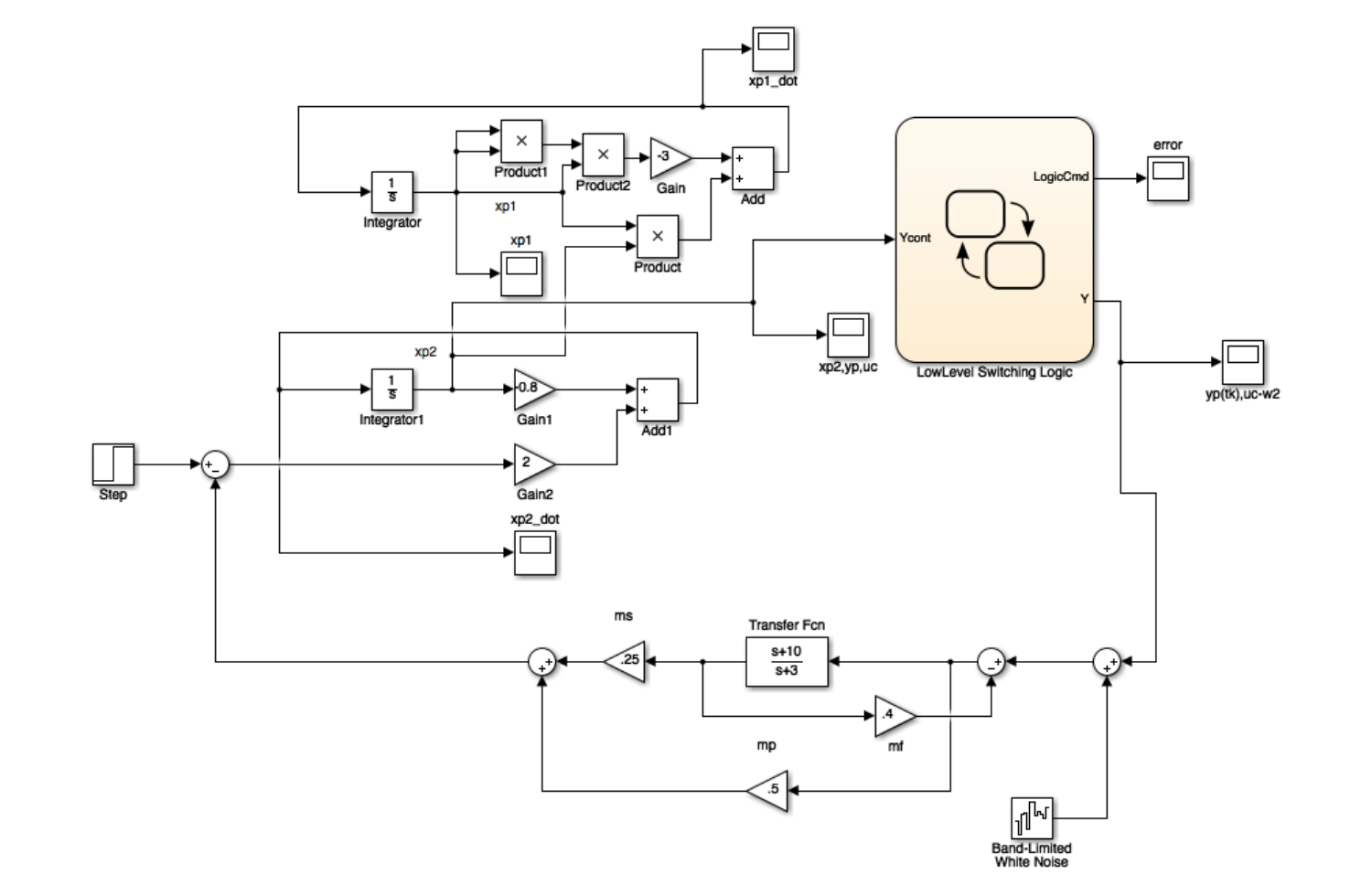}
		\caption{Simulink model for example 1.}
		\label{fig:NCSYpEpEX1}
	\end{figure}
	
	\begin{align*} 
		&\dot{x}_{p1}(t)=-3x^3_{p1}(t)+x_{p1}(t)x_{p2}(t)\\
		&\dot{x}_{p2}(t)=-0.8x_{p2}(t)+2u_p(t)\\
		&y_p(t)=x_{p2}(t),
	\end{align*}
	
	where $\rho_p=0.4$ and $\nu_p=0$. And the model for the controller accompanied with our passivation method (M matrix) is the following:
	
	\begin{align*} 
		&\dot{x}_{c}(t)=-3x_{c}(t)+u_{c}(t)\\
		&y_c(t)=7x_{c}(t)+u_c(t),
	\end{align*}
	
	after adding the M-matrix, $\rho_c=1.8$ and $\nu_c=0$. The interconnection satisfies $\rho_c+\nu_p-\frac{1}{4}>0$, and by picking $\delta=0.3$, $\rho_p-\delta >0$ is satisfied, $w1$ is a step function, and $w2$ is white noise with power $0.02$, the simulation results are given in: figures  \ref{fig:NCSYpEpEX1XP1}, \ref{fig:NCSYpEpEX1XP2}, \ref{fig:NCSYpEpEX1Y}.
	\begin{figure}[!t]
		\centering
		\includegraphics[scale = .5]{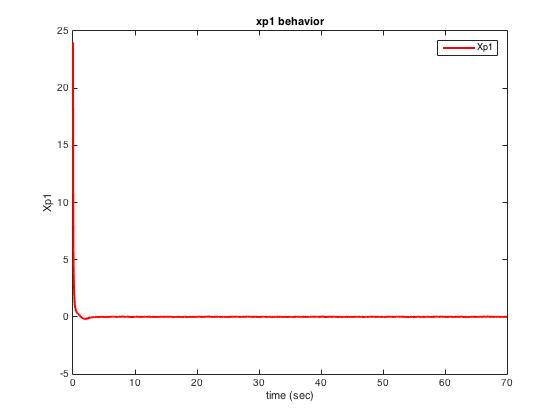}
		\caption{Simulation results for example 1.}
		\label{fig:NCSYpEpEX1XP1}
	\end{figure}
	\begin{figure}[!t]
		\centering
		\includegraphics[scale = .5]{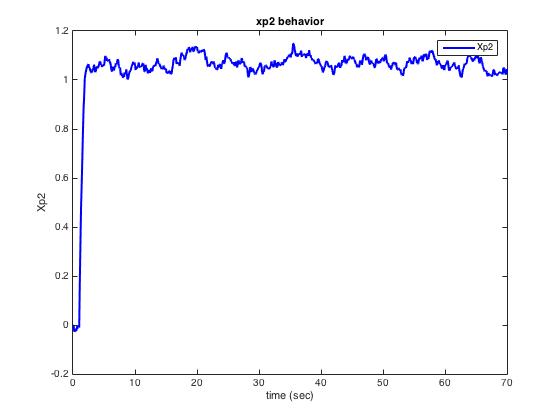}
		\caption{Simulation results for example 1.}
		\label{fig:NCSYpEpEX1XP2}
	\end{figure}
	\begin{figure}[!t]
		\centering
		\includegraphics[scale = .5]{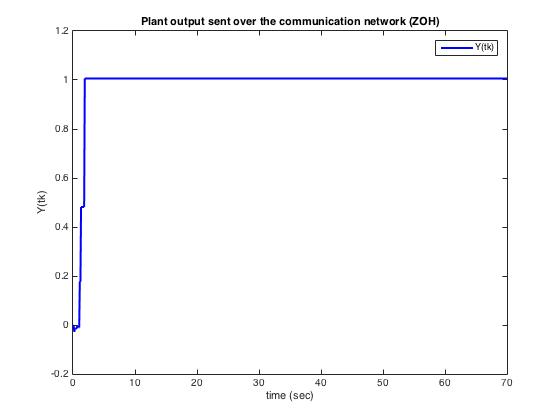}
		\caption{Simulation results for example 1.}
		\label{fig:NCSYpEpEX1Y}
	\end{figure}
\end{example}
\begin{example}
	The plant is defined by the following model:	
	\begin{align*} 
		&\dot{x}_{p1}(t)=x_{p2}(t)\\
		&\dot{x}_{p2}(t)=-0.5x^3_{p1}(t)-x_{p2}(t)+u_p(t)\\
		&y_p(t)=x_{p2}(t),
	\end{align*}
	
	where $\rho_p=1$ and $\nu_p=0$. And the model for the controller is the following:
	
	\begin{align*} 
		&\dot{x}_{c1}(t)=-2x_{c1}(t)-x_{c2}(t)+u_{c}(t)\\
		&\dot{x}_{c2}(t)=-3x_{c1}(t)-5x_{c2}(t)+2u_{c}(t)\\
		&y_c(t)=x_{c1}(t)+x_{c2}(t)+u_c(t),
	\end{align*}
	
	where $\rho_c=0.5$ and $\nu_c=0.3$. The interconnection satisfies $\rho_c+\nu_p-\frac{1}{4}>0$, and by picking $\delta=0.5$, $\rho_p-\nu_c-\delta(1+\nu_c)>0$ is satisfied, $w_1=sin(\frac{5\pi}{2}t)$ is a sinusoidal signal, and $w2$ is white noise with power $0.0001$, the simulation results are given in: figures  \ref{fig:NCSYpEpEX2Ytk}, \ref{fig:NCSYpEpEX2Y}.

	\begin{figure}[!t]
		\centering
		\includegraphics[scale = .5]{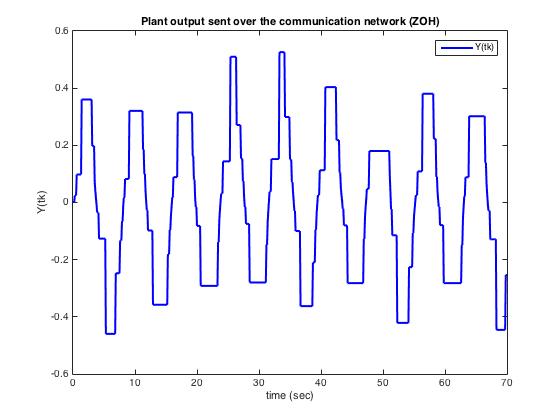}
		\caption{Simulation results for example 2.}
		\label{fig:NCSYpEpEX2Ytk}
	\end{figure}
	\begin{figure}[!t]
		\centering
		\includegraphics[scale = .5]{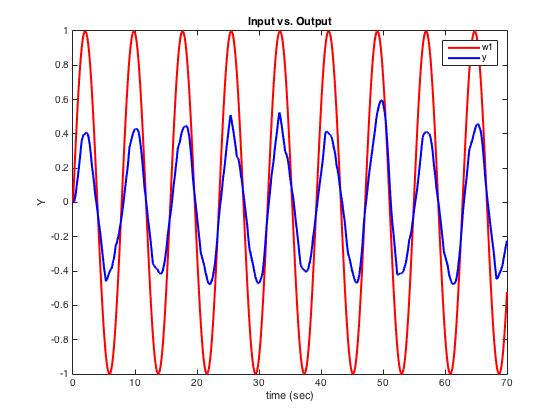}
		\caption{Simulation results for example 2.}
		\label{fig:NCSYpEpEX2Y}
	\end{figure}
\end{example} 	
\subsubsection{Calculating Passivity indices for Figure \ref{fig:ETNCSPS} (Event-triggering condition on Plant's Output)}

\begin{theorem}
	The networked control system given in figure \ref{fig:ETNCSPS} where $G_p$ and $G_c$ have passivity indices $\nu_p$, $\rho_p$, $\nu_c$, and $\rho_c$ and the triggering instance $t_k$ is determined by the triggering condition $||e_p(t)||_2^2>\delta||y_p(t)||_2^2$ and $\delta  \in(0,1]$, is passive from the inputs 	$\begin{bmatrix}
	w_1(t) \\ w_2(t)      
	\end{bmatrix}$ to the outputs
	$\begin{bmatrix}
	y_p(t) \\ y_c(t)      
	\end{bmatrix}$ with input passivity index $\epsilon_0$ and output passivity index $\delta_0$:
	
	$\dot{V}(t) \leq 
	\begin{bmatrix}
	w^T_1(t) & w^T_2(t)      
	\end{bmatrix}
	\begin{bmatrix}
	y_p(t) \\ y_c(t)      
	\end{bmatrix}
	- \epsilon_0 
	\begin{bmatrix}
	w^T_1(t) & w^T_2(t)      
	\end{bmatrix}
	\begin{bmatrix}
	w_1(t) \\ w_2(t)      
	\end{bmatrix}
	-\delta_0 
	\begin{bmatrix}
	y_p^T(t) & y_c^T(t)      
	\end{bmatrix}
	\begin{bmatrix}
	y_p(t) \\ y_c(t)      
	\end{bmatrix}$,
	
	where:
	\begin{align*}
		&\epsilon_0 < \min(\nu_p,\nu_c-|\nu_c|)\\
		&\delta_0 < \min(\beta(\nu_c)-\frac{\nu_c^2}{\nu_c-|\nu_c|-\epsilon_0},\rho_c+\nu_p-\frac{1}{4}-\frac{\nu_p^2}{\nu_p-\epsilon_0})
	\end{align*}

\end{theorem}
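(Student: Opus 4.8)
The plan is to start directly from the QSR-dissipativity inequality established in the preceding theorem, namely $\dot V(t) \le w^T R w + 2 w^T S y + y^T Q y$ with the $R$, $S$, $Q$ given there, and to re-cast its right-hand side into the IF-OFP supply rate $w^T y - \epsilon_0 w^T w - \delta_0 y^T y$. The first step is to expand the cross term. Because $S$ carries $\tfrac12 I$ on its diagonal, the product $2 w^T S y$ produces exactly the passive pairing $w_1^T y_p + w_2^T y_c = w^T y$ together with the two off-diagonal contributions $2\nu_p\, w_1^T y_c$ and $-2\nu_c\, w_2^T y_p$. Hence the entire surplus that must be dominated by $-\epsilon_0 w^T w - \delta_0 y^T y$ is
\[
-\nu_p w_1^T w_1 + (|\nu_c|-\nu_c) w_2^T w_2 - \beta(\nu_c) y_p^T y_p - (\rho_c+\nu_p-\tfrac14) y_c^T y_c + 2\nu_p w_1^T y_c - 2\nu_c w_2^T y_p .
\]

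The key structural observation I would exploit is that this expression \emph{decouples} into two independent quadratic forms, one in the pair $(w_1, y_c)$ and one in the pair $(w_2, y_p)$, since $w_1$ couples only to $y_c$ and $w_2$ only to $y_p$. Establishing the claimed bound therefore reduces to two separate $2\times2$ matrix inequalities. For the $(w_1,y_c)$ block I would require
\[
\begin{bmatrix} -(\nu_p-\epsilon_0) & \nu_p \\ \nu_p & -(\rho_c+\nu_p-\tfrac14-\delta_0) \end{bmatrix} \le 0 ,
\]
and for the $(w_2,y_p)$ block the analogous symmetric matrix with diagonal entries $-(\nu_c-|\nu_c|-\epsilon_0)$ and $-(\beta(\nu_c)-\delta_0)$ and off-diagonal entry $-\nu_c$. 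Each inequality is then handled by the standard negative-semidefiniteness test for a symmetric $2\times2$ matrix (equivalently, by completing the square, i.e.\ a Schur-complement argument): nonpositive diagonal entries together with a nonnegative determinant.

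Carrying out these two determinant conditions is where the stated bounds emerge. Nonpositivity of the diagonal entries forces $\epsilon_0 < \nu_p$ from the first block and $\epsilon_0 < \nu_c-|\nu_c|$ from the second, whose intersection is precisely $\epsilon_0 < \min(\nu_p,\,\nu_c-|\nu_c|)$. With these in force the denominators $\nu_p-\epsilon_0$ and $\nu_c-|\nu_c|-\epsilon_0$ are strictly positive, so solving each determinant inequality $ac-b^2\ge 0$ for $\delta_0$ yields $\delta_0 < \rho_c+\nu_p-\tfrac14-\tfrac{\nu_p^2}{\nu_p-\epsilon_0}$ and $\delta_0 < \beta(\nu_c)-\tfrac{\nu_c^2}{\nu_c-|\nu_c|-\epsilon_0}$ respectively; taking the smaller of the two reproduces the claimed admissible range for $\delta_0$.

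I expect the only delicate point to be the bookkeeping around the sign of $\nu_c$: the $|\nu_c|$ entries mean that $\nu_c-|\nu_c|$ vanishes when $\nu_c\ge 0$ (so that $\epsilon_0$ is forced nonpositive there) and equals $2\nu_c$ when $\nu_c<0$, and one must verify that the chosen $\epsilon_0$ keeps $\nu_c-|\nu_c|-\epsilon_0>0$ so that the division producing the $\delta_0$ bound is legitimate. Everything else is the routine $2\times2$ semidefiniteness computation, and since $\beta(\nu_c)$ is inherited unchanged from the previous theorem, no new case analysis on its three branches is required here.
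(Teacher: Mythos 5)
Your proposal is correct and follows essentially the same route as the paper: both start from the QSR inequality of the preceding theorem, peel off the $w^T y$ term from $2w^T S y$, split the remainder into the two decoupled quadratic forms in $(w_1,y_c)$ and $(w_2,y_p)$ (the paper's matrices $M$ and $N$), and read off the bounds on $\epsilon_0$ and $\delta_0$ from the diagonal-sign and determinant conditions for negative semidefiniteness. Your remark about $\nu_c-|\nu_c|$ vanishing for $\nu_c\ge 0$ is a point the paper leaves implicit, but it does not change the argument.
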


\begin{proof}

	We want to calculate the passivity indices for the setup in figure \ref{fig:ETNCSPS} from inputs $[w_1(t)~ w_2(t)]^T$ to outputs $[y_p(t)~ y_c(t)]^T$. We know that the setup is \emph{QSR-disspative} such that:
	
	\begin{align*}
		&\dot{V}(t) \leq w^T(t)Rw(t)+2w^T(t)Sy(t)+y^T(t)Qy(t)
	\end{align*}
	
	where $w(t)=\begin{bmatrix}
	w_1(t) \\ w_2(t)   
	\end{bmatrix}$,
	$y(t)=\begin{bmatrix}
	y_p(t) \\ y_c(t)   
	\end{bmatrix}$ and:\\
	
	$S=\begin{bmatrix}
	\frac{1}{2}I & \nu_pI\\
	-\nu_cI & \frac{1}{2}I      
	\end{bmatrix}$, $R=\begin{bmatrix}
	-\nu_pI & 0I\\
	0I & -(\nu_c-|\nu_c|)I      
	\end{bmatrix}$, and $Q=\begin{bmatrix}
	-\beta(\nu_c)I & 0I\\
	0I & -(\rho_c+\nu_p-\frac{1}{4})I\end{bmatrix}$
	where:\\
	
	\[\beta(\nu_c) =
	\begin{cases}
	\rho_p-\nu_c-\delta(1+\nu_c)      & \quad \text{if } \nu_c > 0\\
	\rho_p-\delta       & \quad \text{if } \nu_c = 0\\
	\rho_p+2\nu_c-\delta (1-3\nu_c)  & \quad \text{if } \nu_c < 0\\
	\end{cases}
	\]
	we need to show that: \\

	$\dot{V}(t) \leq w^T(t)Rw(t)+2w^T(t)Sy(t)+y^T(t)Qy(t)\\~~~~~~~~~~~~ \leq 
	\begin{bmatrix}
	w^T_1(t) & w^T_2(t)      
	\end{bmatrix}
	\begin{bmatrix}
	y_p(t) \\ y_c(t)      
	\end{bmatrix}
	- \epsilon_0 
	\begin{bmatrix}
	w^T_1(t) & w^T_2(t)      
	\end{bmatrix}
	\begin{bmatrix}
	w_1(t) \\ w_2(t)      
	\end{bmatrix}
	-\delta_0 
	\begin{bmatrix}
	y_p^T(t) & y_c^T(t)      
	\end{bmatrix}
	\begin{bmatrix}
	y_p(t) \\ y_c(t)      
	\end{bmatrix}$~~~(12)

	and calculate the passivity indices $\epsilon_0$, and $\delta_0$.\\
	
	Simplifying (12) and moving the terms to one side we have: \\
	
	$\dot{V}(t)\leq  (\epsilon_0-\nu_p)w_1^T(t)w_1(t)-(\nu_c-|\nu_c|-\epsilon_0) w_2^T(t)w_2(t)-(\beta(\nu_c)-\delta_0)y^T_p(t)y_p(t)\\~~~~~~~~~~~~-(\rho_c+\nu_p-\frac{1}{4}-\delta_0)y^T_c(t)y_c(t)+2\nu_pw^T_1(t)y_c(t)-2\nu_cw^T_2(t)y_p(t)\\~~~~~~~~~~~\leq 0 $.\\
	
	For this to hold we should have:\\
	
	$\begin{bmatrix}
	w^T_1(t) & y_c^T(t)      
	\end{bmatrix} M
	\begin{bmatrix}
	w_1(t) \\ y_c(t)      
	\end{bmatrix} + \begin{bmatrix}
	w^T_2(t) & y^T_p(t)      \end{bmatrix} N \begin{bmatrix}
	w_2(t) \\ y_p(t)      \end{bmatrix} \leq 0$,\\
	
	where  $M=\begin{bmatrix}
	(\epsilon_0-\nu_p) & \nu_p\\
	\nu_p & -(\rho_c+\nu_p-\frac{1}{4}-\delta_0)
	\end{bmatrix}$ and  $N=\begin{bmatrix}
	-(\nu_c-|\nu_c|-\epsilon_0) & -\nu_c\\
	-\nu_c&-(\beta(\nu_c)-\delta_0) \end{bmatrix}$.\\ 
	
	For matrices M and N to be negative semi-definite, they need to meet the following conditions:
	
	\begin{align*}
		\epsilon_0 < \nu_p\\
		\epsilon_0 <  \nu_c-|\nu_c|\\
		\delta_0 < \beta(\nu_c)\\
		\delta_0 < \rho_c+\nu_p-\frac{1}{4}\\
		\nu_p^2\leq -(\epsilon_0-\nu_p)(\rho_c+\nu_p-\frac{1}{4}-\delta_0)\\
		\nu_c^2\leq (\beta(\nu_c)-\delta_0)(\nu_c-|\nu_c|-\epsilon_0)
	\end{align*}
	
	which also prove the theorem.
\end{proof}
\subsubsection{Passivity for Figure \ref{fig:ETNCSPS} from $w_1 \to y_p$ (Event-triggering condition on the Plant's Output)}
\begin{theorem}	
	The networked control system given in figure \ref{fig:ETNCSPS} where $G_p$ and $G_c$ have passivity indices $\nu_p$, $\rho_p$, $\nu_c$, and $\rho_c$ and the triggering instance $t_k$ is explicitly determined by the condition $||e_p(t)||_2^2 > \delta ||y_p(t)||_2^2$ with $\delta  \in(0,1]$, and $w_2=0$, is passive from $w_1 \to y_p$ meaning:\\
	\begin{align*}
		\dot{V}(t) \leq w_1^T(t)y_p(t),
	\end{align*}
	
	if:
	\begin{align*}
		\beta(\nu_c) \geq 0\\
		\nu_p > 0\\
		\rho_c > \frac{1}{4}\\
	\end{align*}	 
\end{theorem}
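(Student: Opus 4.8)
The plan is to start from the \emph{QSR}-dissipativity inequality already established in the first theorem of this section, specialize it to the case $w_2 = 0$, and then verify that the resulting bound on $\dot V(t)$ collapses to the passivity inequality $\dot V(t) \le w_1^T(t)y_p(t)$ under the three stated hypotheses. Concretely, I would substitute $w(t) = [\,w_1^T(t)\ \ 0\,]^T$ into $\dot V(t) \le w^T R w + 2 w^T S y + y^T Q y$ and expand each quadratic form using the explicit $R$, $S$, $Q$ from that theorem. Since the second block of $w$ vanishes, only the top-left entry of $R$ and the top row of $S$ survive, giving
\[
\dot V(t) \le w_1^T y_p - \nu_p\, w_1^T w_1 + 2\nu_p\, w_1^T y_c - \beta(\nu_c)\, y_p^T y_p - \bigl(\rho_c+\nu_p-\tfrac14\bigr) y_c^T y_c .
\]
The goal then reduces to showing that every term other than $w_1^T y_p$ sums to something nonpositive.

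The residual splits naturally into a term depending only on $y_p$ and a quadratic form coupling $w_1$ and $y_c$. The $y_p$ contribution is simply $-\beta(\nu_c)\, y_p^T y_p$, which is nonpositive precisely because of the hypothesis $\beta(\nu_c) \ge 0$; this disposes of that piece immediately. The crux is the cross term $2\nu_p w_1^T y_c$, which cannot be discarded and must be balanced against $-\nu_p w_1^T w_1$ and $-(\rho_c+\nu_p-\frac14) y_c^T y_c$.

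I expect the cleanest route is to complete the square in $w_1$: using $-\nu_p w_1^T w_1 + 2\nu_p w_1^T y_c = -\nu_p\|w_1 - y_c\|_2^2 + \nu_p\|y_c\|_2^2$, the $\nu_p\|y_c\|_2^2$ term cancels against part of the $y_c$ coefficient, leaving $-\nu_p\|w_1-y_c\|_2^2 - (\rho_c - \frac14)\|y_c\|_2^2$. Each of these is nonpositive provided $\nu_p \ge 0$ and $\rho_c \ge \frac14$, which are guaranteed (with room to spare) by the hypotheses $\nu_p > 0$ and $\rho_c > \frac14$. Adding back the $-\beta(\nu_c)\|y_p\|_2^2$ term then yields $\dot V(t) \le w_1^T y_p$ as required. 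As an alternative to completing the square, I could instead write the $(w_1, y_c)$ quadratic form as a symmetric $2\times 2$ block matrix with diagonal blocks $-\nu_p I$ and $-(\rho_c+\nu_p-\frac14)I$ and off-diagonal $\nu_p I$, and check negative semidefiniteness via its leading principal minors, whose determinant condition reduces to $\nu_p(\rho_c - \frac14) \ge 0$ and recovers the same requirements. The only genuine obstacle is the handling of the $w_1$--$y_c$ coupling; once the square is completed the inequality is immediate, so I would focus the write-up there and treat the sign bookkeeping as routine.
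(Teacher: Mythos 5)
Your proposal is correct and follows essentially the same route as the paper: both specialize the QSR-dissipativity bound to $w_2=0$, peel off the $-\beta(\nu_c)\,y_p^Ty_p$ term using $\beta(\nu_c)\ge 0$, and show the remaining $(w_1,y_c)$ quadratic form is nonpositive, which reduces to $\nu_p\ge 0$ and $\nu_p(\rho_c-\tfrac14)\ge 0$. The paper phrases this as negative semidefiniteness of the $2\times 2$ block matrix (your stated alternative), while your completion of the square is an equivalent, slightly more explicit verification of the same condition.
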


\begin{proof}
	
	We know that the setup is \emph{QSR-disspative} such that:
	
	\begin{align*}
		&\dot{V}(t) \leq w^T(t)Rw(t)+2w^T(t)Sy(t)+y^T(t)Qy(t)
	\end{align*}
	
	where $w(t)=\begin{bmatrix}
	w_1(t) \\ w_2(t)   
	\end{bmatrix}$,
	$y(t)=\begin{bmatrix}
	y_p(t) \\ y_c(t)   
	\end{bmatrix}$ and:\\
	
	$S=\begin{bmatrix}
	\frac{1}{2}I & \nu_pI\\
	-\nu_cI & \frac{1}{2}I      
	\end{bmatrix}$, $R=\begin{bmatrix}
	-\nu_pI & 0I\\
	0I & -(\nu_c-|\nu_c|)I      
	\end{bmatrix}$, and $Q=\begin{bmatrix}
	-\beta(\nu_c)I & 0I\\
	0I & -(\rho_c+\nu_p-\frac{1}{4})I\end{bmatrix}$
	and:\\
	
	\[\beta(\nu_c) =
	\begin{cases}
	\rho_p-\nu_c-\delta(1+\nu_c)      & \quad \text{if } \nu_c > 0\\
	\rho_p-\delta       & \quad \text{if } \nu_c = 0\\
	\rho_p+2\nu_c-\delta(1-3\nu_c)  & \quad \text{if } \nu_c < 0\\
	\end{cases}
	\]
	
	Given that $w_2=0$, we need to show that \\

	$\dot{V}(t)	\leq  w_1^T(t)y_p(t)-\nu_pw_1^T(t)w_1(t)-\beta(\nu_c)y^T_p(t)y_p(t)-(\rho_c+\nu_p-\frac{1}{4})y^T_c(t)y_c(t)+2\nu_pw^T_1(t)y_c(t)\\~~~~~~~~~~~~ \leq w_1^T(t)y_p(t).~~~~~~~~~~~~~~~~~~~~~~~~~~~~~~~~~~~~~~~~~~~~~~~~~~~~~~~~~~~~~~~~~~~~~~~~~~~~~~~~~~~(13)\\$
	
	Simplifying (13) and moving the terms to one side we have: \\
	
	$\dot{V}(t)\leq  -\nu_pw_1^T(t)w_1(t)-\beta(\nu_c)y^T_p(t)y_p(t)-(\rho_c+\nu_p-\frac{1}{4})y^T_c(t)y_c(t)+2\nu_pw^T_1(t)y_c(t)\\~~~~~~~~~~~\leq 0 $\\
	
	We need to show that\\
	
	$-\beta(\nu_c)y^T_p(t)y_p(t) + \begin{bmatrix}
	w^T_1(t) & y_c^T(t)      
	\end{bmatrix} M
	\begin{bmatrix}
	w_1(t) \\ y_c(t)      
	\end{bmatrix} \leq 0$,
	
	where $M=\begin{bmatrix}
	-\nu_p & \nu_p\\
	\nu_p &-(\rho_c+\nu_p-\frac{1}{4})
	\end{bmatrix}$ 
	it is easy to see that the system is passive if:
	\begin{align*}
		\beta(\nu_c) \geq 0\\
		\nu_p > 0\\
		\rho_c > \frac{1}{4}
	\end{align*}	 
\end{proof}
\subsubsection{Passivity and Passivity indices for Figure \ref{fig:ETNCSPS} from $w_1 \to y_p$ (Event-triggering condition on Plant's Output)}
\begin{theorem}	
	The networked control system given in figure \ref{fig:ETNCSPS} where $G_p$ and $G_c$ have passivity indices $\nu_p$, $\rho_p$, $\nu_c$, and $\rho_c$ and the triggering instance $t_{k}$ is explicitly determined by the condition $||e_p(t)||_2^2>\delta ||y_p(t)||_2^2$ with $\delta  \in(0,1]$, and $w_2=0$, is passive from $w_1 \to y_p$ with input passivity index $\epsilon_0$ and output passivity index $\delta_0$ meaning:\\
	\begin{align*}
		\dot{V}(t) \leq w_1^T(t)y_p(t)- \epsilon_0 w^T_1(t)w_1(t) -\delta_0 y_p^T(t)y_p(t),
	\end{align*}
	
	where
	\begin{align*}
		\epsilon_0 < \min(\nu_p, \frac{\nu_p(\rho_c-\frac{1}{4})}{\rho_c+\nu_p-\frac{1}{4}})\\
		0 \leq \delta_0 \leq \beta(\nu_c)\\
	\end{align*}
	
\end{theorem}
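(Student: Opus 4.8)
The plan is to reuse the QSR-dissipativity certificate already established for the interconnection of Figure \ref{fig:ETNCSPS} and specialize it to $w_2 = 0$, exactly as in the preceding passivity theorem, but this time carrying two free scalars $\epsilon_0$ and $\delta_0$ through the estimate so that the computation yields explicit passivity indices rather than bare passivity.

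First I would substitute $w(t) = [\,w_1^T(t)\ \ 0\,]^T$ into $\dot V(t) \leq w^T R w + 2 w^T S y + y^T Q y$. Only the $(1,1)$ entry of $R$, the first row of $S$, and the full $Q$ survive, which gives
\begin{align*}
\dot V(t) \leq w_1^T y_p - \nu_p w_1^T w_1 + 2\nu_p w_1^T y_c - \beta(\nu_c) y_p^T y_p - \left(\rho_c + \nu_p - \tfrac{1}{4}\right) y_c^T y_c .
\end{align*}
Proving the claim then amounts to subtracting the target $w_1^T y_p - \epsilon_0 w_1^T w_1 - \delta_0 y_p^T y_p$ and verifying that the remainder
\begin{align*}
-(\nu_p - \epsilon_0) w_1^T w_1 + 2\nu_p w_1^T y_c - (\beta(\nu_c) - \delta_0) y_p^T y_p - \left(\rho_c + \nu_p - \tfrac{1}{4}\right) y_c^T y_c
\end{align*}
is nonpositive.

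The key structural observation is that $y_p$ decouples from the pair $(w_1, y_c)$. The isolated term $-(\beta(\nu_c) - \delta_0) y_p^T y_p$ is nonpositive exactly when $\delta_0 \leq \beta(\nu_c)$, which, together with $\delta_0 \geq 0$ so that it is a genuine output index, gives the stated bound on $\delta_0$. Everything else collects into the quadratic form $[\,w_1^T\ \ y_c^T\,]\,M\,[\,w_1\ \ y_c\,]^T$ with
\begin{align*}
M = \begin{bmatrix} -(\nu_p - \epsilon_0) & \nu_p \\ \nu_p & -\left(\rho_c + \nu_p - \tfrac{1}{4}\right) \end{bmatrix},
\end{align*}
so the remainder is nonpositive as soon as $M$ is negative semi-definite. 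Imposing this through the diagonal entry gives $\epsilon_0 < \nu_p$, and through the determinant condition $(\nu_p - \epsilon_0)(\rho_c + \nu_p - \tfrac14) - \nu_p^2 \geq 0$ gives, after rearrangement, $\epsilon_0 \leq \nu_p(\rho_c - \tfrac14)/(\rho_c + \nu_p - \tfrac14)$; taking $\epsilon_0$ strictly below the minimum of the two makes $M$ negative definite and finishes the proof.

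The only step needing care is that final determinant manipulation, and the main obstacle is verifying which of the two $\epsilon_0$-bounds is binding: the rearrangement is legitimate only when $\rho_c + \nu_p - \tfrac14 > 0$, which is guaranteed by the inherited hypotheses $\rho_c > \tfrac14$ and $\nu_p > 0$, and under these the determinant bound is precisely the smaller, hence active, constraint.
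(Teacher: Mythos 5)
Your proposal is correct and follows essentially the same route as the paper: specialize the QSR certificate to $w_2=0$, subtract the target supply rate, split off the decoupled $-(\beta(\nu_c)-\delta_0)y_p^Ty_p$ term, and impose negative semi-definiteness of the same $2\times 2$ matrix $M$ in $(w_1,y_c)$, whose diagonal and determinant conditions yield exactly the stated bounds $\epsilon_0<\nu_p$ and $\nu_p(\rho_c-\tfrac14)\geq\epsilon_0(\rho_c+\nu_p-\tfrac14)$. Your closing observation that the determinant bound is the active one when $\nu_p\geq 0$ and $\rho_c+\nu_p-\tfrac14>0$ is a small correct addition not made explicit in the paper.
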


\begin{proof}
	
	We know that the setup is \emph{QSR-disspative} such that:
	
	\begin{align*}
		&\dot{V}(t) \leq w^T(t)Rw(t)+2w^T(t)Sy(t)+y^T(t)Qy(t)
	\end{align*}
	
	where $w(t)=\begin{bmatrix}
	w_1(t) \\ w_2(t)   
	\end{bmatrix}$,
	$y(t)=\begin{bmatrix}
	y_p(t) \\ y_c(t)   
	\end{bmatrix}$ and:\\
	
	$S=\begin{bmatrix}
	\frac{1}{2}I & \nu_pI\\
	-\nu_cI & \frac{1}{2}I      
	\end{bmatrix}$, $R=\begin{bmatrix}
	-\nu_pI & 0I\\
	0I & -(\nu_c-|\nu_c|)I      
	\end{bmatrix}$, and $Q=\begin{bmatrix}
	-\beta(\nu_c)I & 0I\\
	0I & -(\rho_c+\nu_p-\frac{1}{4})I\end{bmatrix}$
	and:\\
	
	\[\beta(\nu_c) =
	\begin{cases}
	\rho_p-\nu_c-\delta(1+\nu_c)      & \quad \text{if } \nu_c > 0\\
	\rho_p-\delta       & \quad \text{if } \nu_c = 0\\
	\rho_p+2\nu_c-\delta(1-3\nu_c)  & \quad \text{if } \nu_c < 0\\
	\end{cases}
	\]
	
	Given that $w_2=0$, we need to show that \\
	
	$\dot{V}(t)	\leq  w_1^T(t)y_p(t)-\nu_pw_1^T(t)w_1(t)-\beta(\nu_c)y^T_p(t)y_p(t)-(\rho_c+\nu_p-\frac{1}{4})y^T_c(t)y_c(t)+2\nu_pw^T_1(t)y_c(t)\\~~~~~~~~~~~~~~ \leq w_1^T(t)y_p(t)- \epsilon_0 w^T_1(t)w_1(t) -\delta_0 y_p^T(t)y_p(t). $~~~~~~~~~~~~~~~~~~~~~~~~~~~~~~~~~~~~~~~~~~~~~~~~~~~~~~~~~~~~~~~~~~~~~~~~~~~~(14)\\
	
	Simplifying (14) and moving the terms to one side we have: \\
	
	$\dot{V}(t)\leq  (\epsilon_0-\nu_p)w_1^T(t)w_1(t)+(\delta_0-\beta(\nu_c))y^T_p(t)y_p(t)-(\rho_c+\nu_p-\frac{1}{4})y^T_c(t)y_c(t)+2\nu_pw^T_1(t)y_c(t)\\~~~~~~~~~~~\leq 0 $.\\
	
	We need to show that
	
	$(\delta_0-\beta(\nu_c))y^T_p(t)y_p(t) + \begin{bmatrix}
	w^T_1(t) & y_c^T(t)      
	\end{bmatrix} M
	\begin{bmatrix}
	w_1(t) \\ y_c(t)      
	\end{bmatrix} \leq 0$,
	
	where $M=\begin{bmatrix}
	\epsilon_0-\nu_p & \nu_p\\
	\nu_p &-(\rho_c+\nu_p-\frac{1}{4})
	\end{bmatrix}$ 
	it is easy to see that for the relation to hold, the followings should be met:
	\begin{align*}
		\beta(\nu_c) \geq \delta_0\\
		\nu_p > \epsilon_0\\
		\nu_p(\rho_c-\frac{1}{4}) \geq \epsilon_0(\rho_c+\nu_p-\frac{1}{4})\\
	\end{align*}	
	
	hence we can conclude that if $\epsilon_0$ and $\delta_0$ are determined as mentioned in the theorem then we have:
	
	$\dot{V}(t)	\leq w_1^T(t)y_p(t)- \epsilon_0 w^T_1(t)w_1(t) -\delta_0 y_p^T(t)y_p(t)$, \\ 
	
	for $\forall w_1$ and $\forall y_p$
\end{proof}

\begin{example}
	For the feedback interconnection given in example 2, we know that $\rho_p=1$, $\nu_p=0$, $\rho_c=0.5$ and $\nu_c=0.3$. By choosing $\delta = 0.30$, all conditions given in theorem 4 hold. Accordingly we can calculate that $\epsilon_0=0$, and $\delta_0 \leq 0.3$. Figure \ref{fig:PassivityYpEp} shows that the system is output passive with output passivity index $\delta_0=0.3$ and the relation $\int_{0}^{T}(w_1^T(t)y_p(t)-0.3y_p^T(t)y_p(t)) dt$ holds.
	
	\begin{figure}[!t]
		\centering
		\includegraphics[scale = .5]{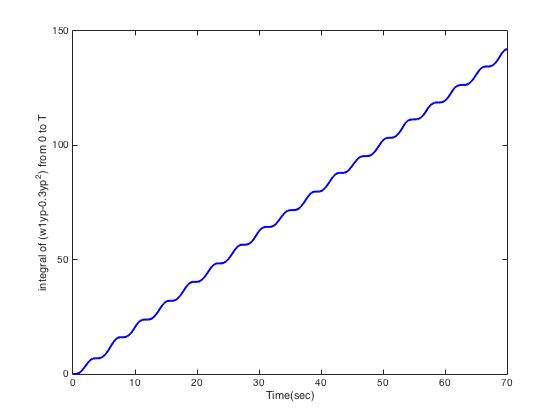}
		\caption{Simulation result for example 3 where $w_1=sin(\frac{5\pi}{2}t)$ .}
		\label{fig:PassivityYpEp}
	\end{figure}
\end{example}
\subsection{\emph{QSR-dissipativity} and Passivity Analysis - Event-triggering condition on the Controller's side - Figure \ref{fig:ETNCSCS})} 
\subsubsection{\emph{QSR-dissipativity} for NCS (Event-triggering condition on Controller's Output - Figure \ref{fig:ETNCSCS})}

\begin{theorem}
	Consider the feedback interconnection of two systems $G_p$ and $G_c$ in figure \ref{fig:ETNCSCS} with respective passivity indices of $\nu_p$, $\rho_p$, $\nu_c$ and $\rho_c$. If the event instance $t_k$ is explicitly determined by the triggering condition $||e_c(t)||_2^2>\delta||y_c(t)||_2^2$ where $\delta  \in(0,1]$, then the event-triggered networked control system is QSR-dissipative with respect to the inputs $w(t)=\begin{bmatrix}
	w_1(t) \\ w_2(t)   
	\end{bmatrix}$, and the outputs
	$y(t)=\begin{bmatrix}
	y_p(t) \\ y_c(t)   
	\end{bmatrix}$, and satisfies the relation:
	\begin{align*}
		&\dot{V}(t) \leq w^T(t)Rw(t)+2w^T(t)Sy(t)+y^T(t)Qy(t)
	\end{align*}
	where,\\
	
	$S=\begin{bmatrix}
	\frac{1}{2}I & \nu_pI\\
	-\nu_cI & \frac{1}{2}I      
	\end{bmatrix}$, $R=\begin{bmatrix}
	-(\nu_p-|\nu_p|)I & 0I\\
	0I & -\nu_cI      
	\end{bmatrix}$, and $Q=\begin{bmatrix}
	-(\rho_p+\nu_c-\frac{1}{4})I& 0I\\
	0I & -\beta(\nu_p)I\end{bmatrix}$
	and:\\
	
	\[\beta(\nu_p) =
	\begin{cases}
	\rho_c-\nu_p-\delta(1+\nu_p)      & \quad \text{if } \nu_p > 0\\
	\rho_c-\delta       & \quad \text{if } \nu_p = 0\\
	\rho_c+2\nu_p-\delta(1-3\nu_p)  & \quad \text{if } \nu_p < 0\\
	\end{cases}
	\]
	
	Additionally if $Q<0$ meaning $\beta(\nu_p)>0$ and $\rho_p+\nu_c-\frac{1}{4}>0$ then the interconnection is $L_2$-stable.
	
\end{theorem}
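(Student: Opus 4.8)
The plan is to mirror the proof of Theorem 1 almost line for line, swapping the roles of plant and controller ($p \leftrightarrow c$, $w_1 \leftrightarrow w_2$) to reflect that the sampler now sits on the controller's output. First I would start from the two IF-OFP supply-rate inequalities $\dot V_p \le u_p^T y_p - \nu_p u_p^T u_p - \rho_p y_p^T y_p$ and $\dot V_c \le u_c^T y_c - \nu_c u_c^T u_c - \rho_c y_c^T y_c$, take $V = V_p + V_c$, and substitute the interconnection relations read off from Figure \ref{fig:ETNCSCS}, namely $u_c(t) = w_2(t) + y_p(t)$ and $u_p(t) = w_1(t) - y_c(t_k) = w_1(t) - y_c(t) + e_c(t)$ with $e_c(t) = y_c(t) - y_c(t_k)$. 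After expanding, the direct-feedthrough terms $-y_c^T y_p$ (from $u_p^T y_p$) and $+y_p^T y_c$ (from $u_c^T y_c$) cancel, leaving a residual cross term $+e_c^T y_p$ together with the error-dependent contributions $-2\nu_p w_1^T e_c + 2\nu_p y_c^T e_c - \nu_p e_c^T e_c$ arising from $-\nu_p u_p^T u_p$.

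The three technical steps that turn this raw inequality into the claimed QSR form are inherited from Theorem 1. I would first apply Young's inequality $e_c^T y_p \le e_c^T e_c + \tfrac14 y_p^T y_p$ (which follows from $\|e_c - \tfrac12 y_p\|_2^2 \ge 0$); the term $+\tfrac14 y_p^T y_p$ shifts the $y_p$ diagonal from $-(\rho_p + \nu_c)$ to $-(\rho_p + \nu_c - \tfrac14)$, reproducing the top-left block of $Q$. Second, I would bound the indefinite coupling by $-2\nu_p w_1^T e_c \le |\nu_p| w_1^T w_1 + |\nu_p| e_c^T e_c$; adding $|\nu_p| w_1^T w_1$ to the existing $-\nu_p w_1^T w_1$ yields the entry $-(\nu_p - |\nu_p|)$ in $R$, while the $w_2$ channel keeps $-\nu_c$ because $u_c$ carries no error term. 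Third, I would invoke the triggering condition $e_c^T e_c \le \delta\, y_c^T y_c$ to absorb the remaining $e_c$-dependent terms into the $y_c^T y_c$ penalty.

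The heaviest bookkeeping, and the step I expect to be the main obstacle, is the case split on the sign of $\nu_p$ used to collapse $2\nu_p y_c^T e_c - \nu_p e_c^T e_c$ (plus the leftover $e_c^T e_c$ terms) into the single coefficient $\beta(\nu_p)$. For $\nu_p > 0$ I would use the identity $2\nu_p y_c^T e_c - \nu_p e_c^T e_c = 2\nu_p\, y_c^T y_c - \begin{bmatrix} e_c^T & y_c^T\end{bmatrix} M \begin{bmatrix} e_c \\ y_c \end{bmatrix}$ with $M = \begin{bmatrix}\nu_p & -\nu_p\\ -\nu_p & 2\nu_p\end{bmatrix} \succeq 0$, together with $\nu_p e_c^T e_c \le \delta \nu_p y_c^T y_c$, to obtain $\beta(\nu_p) = \rho_c - \nu_p - \delta(1+\nu_p)$; the case $\nu_p = 0$ is immediate and gives $\beta(\nu_p) = \rho_c - \delta$; and for $\nu_p < 0$ I would bound each error term by the sign-insensitive estimates $2\nu_p y_c^T e_c \le |\nu_p|(y_c^T y_c + e_c^T e_c)$ and $|\nu_p| e_c^T e_c,\, -\nu_p e_c^T e_c \le \delta|\nu_p| y_c^T y_c$, yielding $\beta(\nu_p) = \rho_c + 2\nu_p - \delta(1 - 3\nu_p)$.

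The subtlety worth flagging is that this is \emph{not} a literal relabeling of Theorem 1: because the loop feeds $-y_c$ into the plant but $+y_p$ into the controller, the residual cross term now appears with a $+$ sign ($+e_c^T y_p$ rather than $-e_p^T y_c$) and the $w_1$–$e_c$ coupling flips sign as well. These sign changes are harmless, however, since both terms are controlled by absolute-value Young bounds that are insensitive to orientation, so the diagonal coefficients come out identically. Collecting the bilinear $w$–$y$ terms into $2w^T S y$ reproduces the stated $S$, and reading off the $w$- and $y$-quadratics gives $R$ and $Q$. Finally, when $Q < 0$, i.e. $\beta(\nu_p) > 0$ and $\rho_p + \nu_c - \tfrac14 > 0$, finite-gain $L_2$-stability follows immediately from the QSR-dissipativity $\Rightarrow$ $L_2$-stability theorem quoted in Section \ref{sec:qsrpassivty}.
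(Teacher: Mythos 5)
Your proposal is correct and follows essentially the same route as the paper's proof: the same interconnection substitutions $u_c = w_2 + y_p$, $u_p = w_1 + e_c - y_c$, the same completing-the-square bound $y_p^T e_c \le e_c^T e_c + \tfrac14 y_p^T y_p$, the same absolute-value Young bound on $-2\nu_p w_1^T e_c$, and the identical three-way case split on the sign of $\nu_p$ (including the matrix $M = \begin{bmatrix}\nu_p & -\nu_p\\ -\nu_p & 2\nu_p\end{bmatrix}$) to arrive at $\beta(\nu_p)$. Your remark that the sign flips relative to Theorem 1 are absorbed by the sign-insensitive bounds is accurate and matches how the paper's computation plays out.
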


\begin{proof} 
	\label{thm:NCSYcEc}
	Given the systems $G_p$ and $G_c$ with passivity indices $\nu_p$, $\rho_p$, $\nu_c$ and $\rho_c$, there exists $V_p(t)$ and $V_c(t)$ such that:
	
	\begin{align*}
		& \dot{V}_p(t) \leq u^T_p(t)y_p(t)-\nu_pu^T_p(t)u_p(t)-\rho_p y^T_p(t)y_p(t) 
		\\&\dot{V}_c(t) \leq u^T_c(t)y_c(t)-\nu_c u^T_c(t)u_c(t)-\rho_c y^T_c(t)y_c(t)\\
	\end{align*}
	
	Additionally, according to the setup portrayed in figure \ref{fig:ETNCSCS}, the following relationships stand for $t \in [t_k,t_{k+1})$:
	
	\begin{align*}
		&  u_c(t)=w_2(t)+y_p(t)
		\\&e_c(t)=y_c(t)-y_c(t_k)
		\\&u_p(t)=w_1(t)-y_c(t_k)=w_1(t)+e_c(t)-y_c(t)
	\end{align*}
	
	and we design the triggering condition based on the following rule ($||e_c(t)||_2^2>\delta||y_c(t)||_2^2 $):
	
	\begin{align*}
		& \langle e_c,e_c\rangle>\delta \langle y_c,y_c\rangle, ~ 0 < \delta \leq 1
	\end{align*} 
	
	We consider the following storage function for the interconnection:
	
	\begin{align*}
		& V(t) = V_p(t) + V_c(t)
	\end{align*} 
	
	hence, we have:
	\begin{align*}
		& \dot{V}(t)=\dot{V}_p(t)+\dot{V}_c(t)\\&~~~~~~ \leq u^T_p(t)y_p(t)-\nu_pu^T_p(t)u_p(t)-\rho_p y^T_p(t)y_p(t)+u^T_c(t)y_c(t)-\nu_c u^T_c(t)u_c(t)-\rho_c y^T_c(t)y_c(t)
		\\
	\end{align*} 
	
	We know that $u_c(t)=w_2(t)+y_p(t)$, $u_p(t)=w_1(t)+e_c(t)-y_c(t)$, as a result for any $t \in [t_k,t_{k+1})$ we have:
	
	\begin{align*}
		&\dot{V}(t) \leq (w_1(t)+e_c(t)-y_c(t))^T y_p(t)-\nu_p(w_1(t)+e_c(t)-y_c(t))^T(w_1(t)+e_c(t)-y_c(t))-\rho_p y^T_p(t)y_p(t)\\&+(w_2(t)+y_p(t))^Ty_c(t)-\nu_c (w_2(t)+y_p(t))^T(w_2(t)+y_p(t))-\rho_c y^T_c(t)y_c(t)
		\\&= w_1^T(t)y_p(t)+w_2^T(t)y_c(t)-\nu_pw_1^T(t)w_1(t)-\nu_cw_2^T(t)w_2(t)-(\rho_p+\nu_c)y^T_p(t)y_p(t)-(\rho_c+\nu_p)y^T_c(t)y_c(t)\\&+2\nu_pw^T_1(t)y_c(t)-2\nu_cw^T_2(t)y_p(t)+2\nu_py^T_c(t)e_c(t)-2\nu_pw^T_1(t)e_c(t)+y^T_p(t)e_c(t)-\nu_pe_c^T(t)e_c(t).
	\end{align*} 
	
	Given that $-2\nu_pw^T_1(t)e_c(t) \leq |\nu_p|w_1^T(t)w_1(t)+|\nu_p|e_c^T(t)e_c(t)$ we have:
	
	$\dot{V}(t) \leq 2\begin{bmatrix}
	w^T_1(t) & w^T_2(t)      
	\end{bmatrix}\begin{bmatrix}
	\frac{1}{2} & \nu_p\\
	-\nu_c & \frac{1}{2}      
	\end{bmatrix}\begin{bmatrix}
	y_p(t) \\ y_c(t)      
	\end{bmatrix}+\begin{bmatrix}
	w^T_1(t) & w^T_2(t)      
	\end{bmatrix}\begin{bmatrix}
	|\nu_p|-\nu_p & 0\\
	0 & -\nu_c      
	\end{bmatrix}\begin{bmatrix}
	w_1(t) \\ w_2(t)      
	\end{bmatrix}\\
	+\begin{bmatrix}
	y^T_p(t) & y^T_c(t)      
	\end{bmatrix}\begin{bmatrix}
	-(\rho_p+\nu_c) & 0\\
	0 & -(\rho_c+\nu_p)     
	\end{bmatrix}\begin{bmatrix}
	y_p(t) \\ y_c(t)      
	\end{bmatrix}+2\nu_py^T_c(t)e_c(t)+|\nu_p|e_c^T(t)e_c(t)+y^T_p(t)e_c(t)\\-\nu_pe_c^T(t)e_c(t)$\\
	
	Additionally we have: $+y^T_p(t)e_c(t)=-( e_c(t) - \frac{1}{2} y_p(t))^2+ e_c^T(t)e_c(t) + 
	\frac{1}{4} y_p^T(t)y_p(t)$ , and $e_c^T(t)e_c(t) \leq \delta y_c^T(t)y_c(t)$ so we have:

	$\dot{V}(t) \leq 2\begin{bmatrix}
	w^T_1(t) & w^T_2(t)      
	\end{bmatrix}\begin{bmatrix}
	\frac{1}{2} & \nu_p\\
	-\nu_c & \frac{1}{2}      
	\end{bmatrix}\begin{bmatrix}
	y_p(t) \\ y_c(t)      
	\end{bmatrix}+\begin{bmatrix}
	w^T_1(t) & w^T_2(t)      
	\end{bmatrix}\begin{bmatrix}
	|\nu_p|-\nu_p & 0\\
	0 & -\nu_c      
	\end{bmatrix}\begin{bmatrix}
	w_1(t) \\ w_2(t)      
	\end{bmatrix}\\
	+\begin{bmatrix}
	y^T_p(t) & y^T_c(t)      
	\end{bmatrix}\begin{bmatrix}
	-(\rho_p+\nu_c-\frac{1}{4}) & 0\\
	0 & -(\rho_c+\nu_p-\delta)     
	\end{bmatrix}\begin{bmatrix}
	y_p(t) \\ y_c(t)      
	\end{bmatrix}+2\nu_py^T_c(t)e_c(t)+|\nu_p|e_c^T(t)e_c(t)-\nu_pe_c^T(t)e_c(t)~~~(15)$\\

	if $\nu_p > 0$ then we have: 
	
	$2\nu_py^T_c(t)e_c(t)-\nu_pe^T_c(t)e_c(t)=-\begin{bmatrix}
	e_c(t) & y_c(t)      
	\end{bmatrix} M \begin{bmatrix}
	e_c(t) \\ y_c(t)      
	\end{bmatrix} + 2 \nu_p y_c^T(t)y_c(t)
	$,   \\
	where $M=\begin{bmatrix}
	\nu_p & -\nu_p\\
	-\nu_p & 2\nu_p    
	\end{bmatrix}\geq0$, and $\nu_p e_c^T(t)e_c(t) \leq \delta \nu_p y_c^T(t)y_c(t)$ so we can simplify (15) further and have:
	
	$\dot{V}(t) \leq 2\begin{bmatrix}
	w^T_1(t) & w^T_2(t)      
	\end{bmatrix}\begin{bmatrix}
	\frac{1}{2} & \nu_p\\
	-\nu_c & \frac{1}{2}      
	\end{bmatrix}\begin{bmatrix}
	y_p(t) \\ y_c(t)      
	\end{bmatrix}+\begin{bmatrix}
	w^T_1(t) & w^T_2(t)      
	\end{bmatrix}\begin{bmatrix}
	|\nu_p|-\nu_p & 0\\
	0 & -\nu_c      
	\end{bmatrix}\begin{bmatrix}
	w_1(t) \\ w_2(t)      
	\end{bmatrix}\\
	+\begin{bmatrix}
	y^T_p(t) & y^T_c(t)      
	\end{bmatrix}\begin{bmatrix}
	-(\rho_p+\nu_c-\frac{1}{4})& 0\\
	0 & -(\rho_c-\nu_p-\delta(1+\nu_p))     
	\end{bmatrix}\begin{bmatrix}
	y_p(t) \\ y_c(t)   
	\end{bmatrix}$\\
	
	if $\nu_p = 0$ then we have: \\
	
	$\dot{V}(t) \leq 2\begin{bmatrix}
	w^T_1(t) & w^T_2(t)      
	\end{bmatrix}\begin{bmatrix}
	\frac{1}{2} & \nu_p\\
	-\nu_c & \frac{1}{2}      
	\end{bmatrix}\begin{bmatrix}
	y_p(t) \\ y_c(t)      
	\end{bmatrix}+\begin{bmatrix}
	w^T_1(t) & w^T_2(t)      
	\end{bmatrix}\begin{bmatrix}
	|\nu_p|-\nu_p & 0\\
	0 & -\nu_c      
	\end{bmatrix}\begin{bmatrix}
	w_1(t) \\ w_2(t)      
	\end{bmatrix}\\
	+\begin{bmatrix}
	y^T_p(t) & y^T_c(t)      
	\end{bmatrix}\begin{bmatrix}
	-(\rho_p+\nu_c-\frac{1}{4})& 0\\
	0 & -(\rho_c-\delta)     
	\end{bmatrix}\begin{bmatrix}
	y_p(t) \\ y_c(t)   
	\end{bmatrix}$\\
	
	if $\nu_p < 0$, given that $2\nu_py^T_c(t)e_c(t) \leq |\nu_p|y_c^T(t)y_c(t)+|\nu_p|e_c^T(t)e_c(t)$, $|\nu_p|e_c^T(t)e_c(t) \leq \delta|\nu_p| y_c^T(t)y_c(t)$ and $-\nu_p e_c^T(t)e_c(t) \leq \delta |\nu_p| y_c^T(t)y_c(t)$ then we have:\\ 
	
	$\dot{V}(t) \leq 2\begin{bmatrix}
	w^T_1(t) & w^T_2(t)      
	\end{bmatrix}\begin{bmatrix}
	\frac{1}{2} & \nu_p\\
	-\nu_c & \frac{1}{2}      
	\end{bmatrix}\begin{bmatrix}
	y_p(t) \\ y_c(t)      
	\end{bmatrix}+\begin{bmatrix}
	w^T_1(t) & w^T_2(t)      
	\end{bmatrix}\begin{bmatrix}
	|\nu_p|-\nu_p & 0\\
	0 & -\nu_c      
	\end{bmatrix}\begin{bmatrix}
	w_1(t) \\ w_2(t)      
	\end{bmatrix}\\
	+\begin{bmatrix}
	y^T_p(t) & y^T_c(t)      
	\end{bmatrix}\begin{bmatrix}
	-(\rho_p+\nu_c-\frac{1}{4})& 0\\
	0 & -(\rho_c+2\nu_p-\delta(1-3\nu_p))     
	\end{bmatrix}\begin{bmatrix}
	y_p(t) \\ y_c(t)   
	\end{bmatrix}$
\end{proof}

Hence, we have shown that:

\begin{align*}
	&\dot{V}(t) \leq w^T(t)Rw(t)+2w^T(t)Sy(t)+y^T(t)Qy(t)
\end{align*}

where $w(t)=\begin{bmatrix}
w_1(t) \\ w_2(t)   
\end{bmatrix}$,
$y(t)=\begin{bmatrix}
y_p(t) \\ y_c(t)   
\end{bmatrix}$ and:\\

$S=\begin{bmatrix}
\frac{1}{2}I & \nu_pI\\
-\nu_cI & \frac{1}{2}I      
\end{bmatrix}$, $R=\begin{bmatrix}
-(\nu_p-|\nu_p|)I & 0I\\
0I & -\nu_cI      
\end{bmatrix}$, and $Q=\begin{bmatrix}
-(\rho_p+\nu_c-\frac{1}{4})I& 0I\\
0I & -\beta(\nu_p)I\end{bmatrix}$
and:\\

\[\beta(\nu_p) =
\begin{cases}
\rho_c-\nu_p-\delta(1+\nu_p)      & \quad \text{if } \nu_p > 0\\
\rho_c-\delta       & \quad \text{if } \nu_p = 0\\
\rho_c+2\nu_p-\delta(1-3\nu_p)  & \quad \text{if } \nu_p < 0\\
\end{cases}
\]
\subsubsection{Simulation Examples for \emph{QSR-dissipativity} for NCS (Event-triggering condition on the Controller's Output - Figure \ref{fig:ETNCSCS}) }
 
 \begin{example}
 	The plant is defined by the following model:	
 	
 	\begin{align*} 
 	&\dot{x}_{p1}(t)=-3x^3_{p1}(t)+x_{p1}(t)x_{p2}(t)\\
 	&\dot{x}_{p2}(t)=0.2x_{p2}(t)+2u_p(t)\\
 	&y_p(t)=x_{p2}(t),
 	\end{align*}
 	
 	where $\rho_p=-0.2$ and $\nu_p=0$. And the model for the controller is the following:
 	
 	\begin{align*} 
 	&\dot{x}_{c}(t)=-3x_{c}(t)+u_{c}(t)\\
 	&y_c(t)=7x_{c}(t)+u_c(t),
 	\end{align*}
 	
 	where $\rho_c=0.3$ and $\nu_c=1$. The interconnection satisfies $\rho_p+\nu_c-\frac{1}{4}>0$, and by picking $\delta=0.2$, $\rho_c-\delta >0$ is satisfied, $w1=0$, and $w2=0$, the simulation results are given in: figures  \ref{fig:NCSYcEcEX1XP1}, \ref{fig:NCSYcEcEX1XP2}, \ref{fig:NCSYcEcEX1Y}.
 	\begin{figure}[!t]
 		\centering
 		\includegraphics[scale = .5]{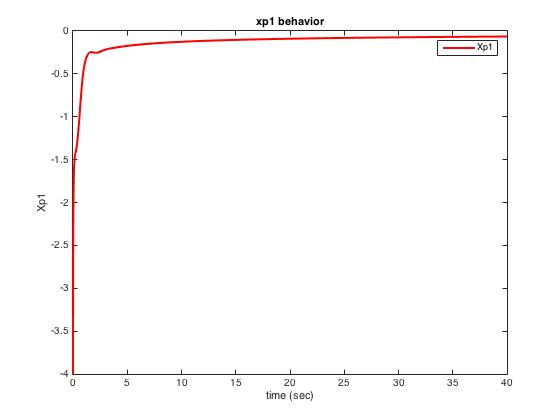}
 		\caption{Simulation results for example 4.}
 		\label{fig:NCSYcEcEX1XP1}
 	\end{figure}
 	\begin{figure}[!t]
 		\centering
 		\includegraphics[scale = .5]{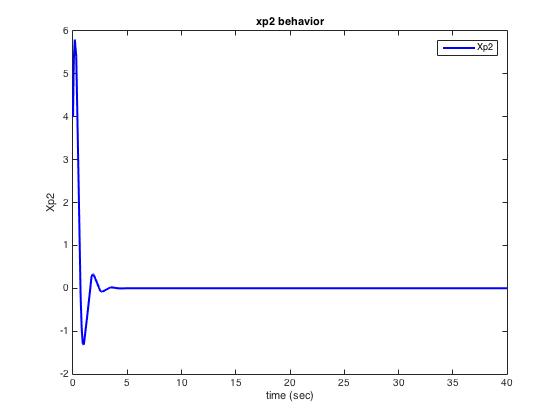}
 		\caption{Simulation results for example 4.}
 		\label{fig:NCSYcEcEX1XP2}
 	\end{figure}
 	\begin{figure}[!t]
 		\centering
 		\includegraphics[scale = .5]{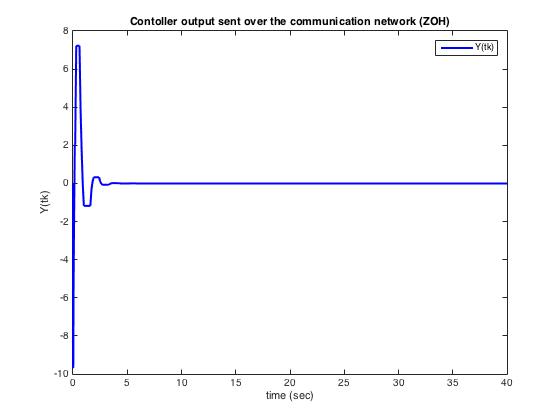}
 		\caption{Simulation results for example 4.}
 		\label{fig:NCSYcEcEX1Y}
 	\end{figure}
 \end{example}
 
 \begin{example}
 	
 	The plant in figure \ref{fig:NCSYcEcEX2} is defined by the following model:	
 	
 	\begin{figure}[!t]
 		\centering
 		\includegraphics[scale = .5]{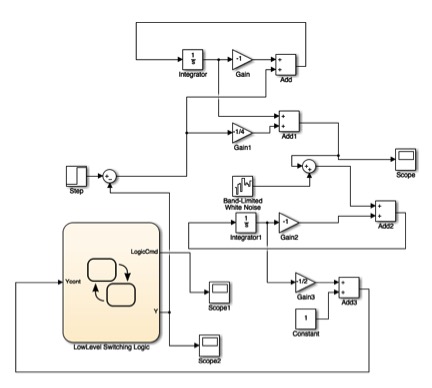}
 		\caption{Simulink model for example 5.}
 		\label{fig:NCSYcEcEX2}
 	\end{figure}

 	\begin{align*} 
 	&\dot{x}_{p1}(t)=-x_{p1}(t)+u_p(t)\\
 	&y_p(t)=x_{p1}(t)-0.25u_p(t),\\
 	\end{align*}
 	
 	where $\rho_p=2$ and $\nu_p=-.37$. And the model for the controller is following:
 	
 	\begin{align*} 
 	&\dot{x}_{c1}(t)=-x_{c1}(t)+u_c(t)\\
 	&y_c(t)=-0.5x_{c1}(t)+1,\\
 	\end{align*}
 	
 	where $\rho_c=1$ and $\nu_c=0.5$. The interconnection satisfies $\rho_p+\nu_c-\frac{1}{4}>0$, and by picking $\delta=0.1$, $\rho_c+2\nu_p-\delta(1-3\nu_p)$ is satisfied, $w1$ is a step signal, and $w2$ is white noise with power $0.08$, the simulation results are given in: figures  \ref{fig:NCSYcEcEX2Ytk}, \ref{fig:NCSYcEcEX2Y}.
 	
 	\begin{figure}[!t]
 		\centering
 		\includegraphics[scale = .5]{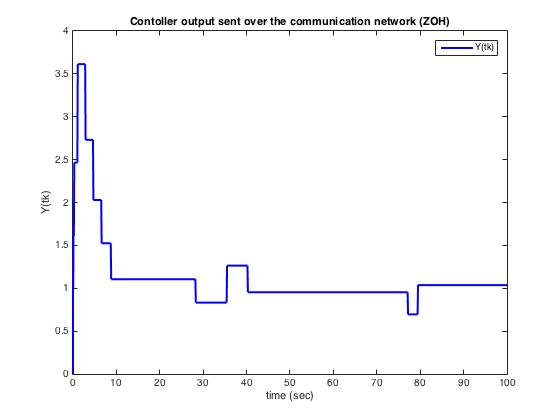}
 		\caption{Simulation results for example 5.}
 		\label{fig:NCSYcEcEX2Ytk}
 	\end{figure}
 	\begin{figure}[!t]
 		\centering
 		\includegraphics[scale = .5]{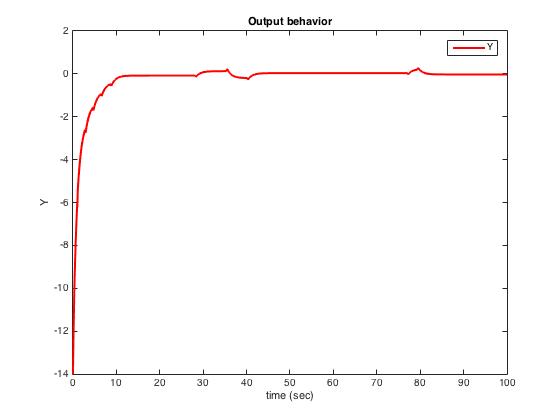}
 		\caption{Simulation results for example 5.}
 		\label{fig:NCSYcEcEX2Y}
 	\end{figure}
 \end{example}
 \subsubsection{Calculating Passivity indices for Figure \ref{fig:ETNCSCS} (Event-triggering condition on the Controller's Output)}
 
 \begin{theorem}
 	The networked control system given in figure \ref{fig:ETNCSCS} where $G_p$ and $G_c$ have passivity indices $\nu_p$, $\rho_p$, $\nu_c$, and $\rho_c$ and the triggering instance $t_k$ is determined by the triggering condition $||e_c(t)||_2^2>\delta||y_c(t)||_2^2$ and $\delta  \in(0,1]$, is passive from the inputs 	$\begin{bmatrix}
 	w_1(t) \\ w_2(t)      
 	\end{bmatrix}$ to the outputs
 	$\begin{bmatrix}
 	y_p(t) \\ y_c(t)      
 	\end{bmatrix}$ with input passivity index $\epsilon_0$ and output passivity index $\delta_0$:
 	
 	$\dot{V}(t) \leq 
 	\begin{bmatrix}
 	w^T_1(t) & w^T_2(t)      
 	\end{bmatrix}
 	\begin{bmatrix}
 	y_p(t) \\ y_c(t)      
 	\end{bmatrix}
 	- \epsilon_0 
 	\begin{bmatrix}
 	w^T_1(t) & w^T_2(t)      
 	\end{bmatrix}
 	\begin{bmatrix}
 	w_1(t) \\ w_2(t)      
 	\end{bmatrix}
 	-\delta_0 
 	\begin{bmatrix}
 	y_p^T(t) & y_c^T(t)      
 	\end{bmatrix}
 	\begin{bmatrix}
 	y_p(t) \\ y_c(t)      
 	\end{bmatrix}$
 	
 	where:
 	\begin{align*}
 	&\epsilon_0 <\min(\nu_c,\nu_p-|\nu_p|)\\
 	&\delta_0 <\min(\beta(\nu_p)-\frac{\nu_p^2}{\nu_p-|\nu_p|-\epsilon_0},\rho_p+\nu_c-\frac{1}{4}-\frac{\nu_c^2}{\nu_c-\epsilon_0})
 	\end{align*}

 \end{theorem}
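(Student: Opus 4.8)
The plan is to run exactly the mirror of the passivity-index computation already carried out for Figure \ref{fig:ETNCSPS}, with the roles of the plant and controller interchanged. I would begin from the \emph{QSR-dissipativity} inequality $\dot{V}(t)\leq w^T(t)Rw(t)+2w^T(t)Sy(t)+y^T(t)Qy(t)$ established in the preceding theorem for the setup of Figure \ref{fig:ETNCSCS}, with $R$, $S$, $Q$ and $\beta(\nu_p)$ as given there, and compare it term by term against the target IF-OFP supply rate $w^T(t)y(t)-\epsilon_0 w^T(t)w(t)-\delta_0 y^T(t)y(t)$.

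First I would expand $2w^T(t)Sy(t)$. Since the diagonal blocks of $S$ equal $\tfrac{1}{2}I$, the resulting terms $w_1^T y_p+w_2^T y_c$ reproduce precisely the bilinear part $w^T y$ required by the passive supply rate, while the off-diagonal blocks of $S$ produce the cross terms $2\nu_p w_1^T y_c$ and $-2\nu_c w_2^T y_p$. Substituting the diagonal $R$ and $Q$ and moving every term to one side, the inequality to be verified collapses to requiring a single quadratic form in $(w_1,w_2,y_p,y_c)$ to be nonpositive.

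The decisive structural observation is that this quadratic form decouples: $w_1$ is coupled only to $y_c$ (through $\nu_p$), and $w_2$ only to $y_p$ (through $\nu_c$), with no term linking the two pairs. Thus it splits as $\begin{bmatrix} w_1^T & y_c^T\end{bmatrix}M\begin{bmatrix} w_1 \\ y_c\end{bmatrix}+\begin{bmatrix} w_2^T & y_p^T\end{bmatrix}N\begin{bmatrix} w_2 \\ y_p\end{bmatrix}$, where $M=\begin{bmatrix}\epsilon_0-(\nu_p-|\nu_p|) & \nu_p \\ \nu_p & -(\beta(\nu_p)-\delta_0)\end{bmatrix}$ and $N=\begin{bmatrix}\epsilon_0-\nu_c & -\nu_c \\ -\nu_c & -(\rho_p+\nu_c-\tfrac{1}{4}-\delta_0)\end{bmatrix}$, and it suffices to make both $M$ and $N$ negative semidefinite.

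Finally I would impose the standard $2\times 2$ negative-semidefiniteness tests on $M$ and $N$. The nonpositive-diagonal requirements give $\epsilon_0<\nu_p-|\nu_p|$, $\epsilon_0<\nu_c$, $\delta_0<\beta(\nu_p)$, and $\delta_0<\rho_p+\nu_c-\tfrac{1}{4}$, i.e.\ $\epsilon_0<\min(\nu_c,\nu_p-|\nu_p|)$; the nonnegative-determinant (Schur) conditions $\nu_p^2\leq(\nu_p-|\nu_p|-\epsilon_0)(\beta(\nu_p)-\delta_0)$ and $\nu_c^2\leq(\nu_c-\epsilon_0)(\rho_p+\nu_c-\tfrac{1}{4}-\delta_0)$, solved for $\delta_0$, yield exactly the claimed bound $\delta_0<\min\!\left(\beta(\nu_p)-\frac{\nu_p^2}{\nu_p-|\nu_p|-\epsilon_0},\,\rho_p+\nu_c-\tfrac{1}{4}-\frac{\nu_c^2}{\nu_c-\epsilon_0}\right)$. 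The only real obstacle is bookkeeping: one must confirm that the decoupling is genuine and track the $|\nu_p|$ term correctly, observing that the strict inequalities on $\epsilon_0$ are precisely what make the Schur denominators $\nu_p-|\nu_p|-\epsilon_0$ and $\nu_c-\epsilon_0$ positive, so that dividing through to isolate $\delta_0$ preserves the inequality direction.
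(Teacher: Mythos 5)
Your proposal matches the paper's own proof essentially step for step: starting from the QSR-dissipativity inequality of the preceding theorem, moving all terms to one side, splitting the resulting quadratic form into the decoupled pairs $(w_1,y_c)$ and $(w_2,y_p)$ with the same matrices $M$ and $N$, and reading off the stated bounds on $\epsilon_0$ and $\delta_0$ from the negative-semidefiniteness (diagonal and determinant) conditions. No substantive differences; the argument is correct as proposed.
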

 
 \begin{proof}
 	We want to calculate the passivity indices for the setup in figure \ref{fig:ETNCSCS} from inputs $[w_1(t)~ w_2(t)]^T$ to outputs $[y_p(t)~ y_c(t)]^T$. We know that the setup is \emph{QSR-disspative} such that:
 	
 	\begin{align*}
 	&\dot{V}(t) \leq w^T(t)Rw(t)+2w^T(t)Sy(t)+y^T(t)Qy(t)
 	\end{align*}
 	
 	where $w(t)=\begin{bmatrix}
 	w_1(t) \\ w_2(t)   
 	\end{bmatrix}$,
 	$y(t)=\begin{bmatrix}
 	y_p(t) \\ y_c(t)   
 	\end{bmatrix}$ and:\\
 	
 	$S=\begin{bmatrix}
 	\frac{1}{2}I & \nu_pI\\
 	-\nu_cI & \frac{1}{2}I      
 	\end{bmatrix}$, $R=\begin{bmatrix}
 	-(\nu_p-|\nu_p|)I & 0I\\
 	0I & -\nu_cI      
 	\end{bmatrix}$, and $Q=\begin{bmatrix}
 	-(\rho_p+\nu_c-\frac{1}{4})I& 0I\\
 	0I & -\beta(\nu_p)I\end{bmatrix}$
 	and:\\
 	
 	\[\beta(\nu_p) =
 	\begin{cases}
 	\rho_c-\nu_p-\delta(1+\nu_p)      & \quad \text{if } \nu_p > 0\\
 	\rho_c-\delta       & \quad \text{if } \nu_p = 0\\
 	\rho_c+2\nu_p-\delta(1-3\nu_p)  & \quad \text{if } \nu_p < 0\\
 	\end{cases}
 	\]
 	
 	we need to show that

 	$\dot{V}(t) \leq w^T(t)Rw(t)+2w^T(t)Sy(t)+y^T(t)Qy(t)\\~~~~~~~~~~~~ \leq 
 	\begin{bmatrix}
 	w^T_1(t) & w^T_2(t)      
 	\end{bmatrix}
 	\begin{bmatrix}
 	y_p(t) \\ y_c(t)      
 	\end{bmatrix}
 	- \epsilon_0 
 	\begin{bmatrix}
 	w^T_1(t) & w^T_2(t)      
 	\end{bmatrix}
 	\begin{bmatrix}
 	w_1(t) \\ w_2(t)      
 	\end{bmatrix}
 	-\delta_0 
 	\begin{bmatrix}
 	y_p^T(t) & y_c^T(t)      
 	\end{bmatrix}
 	\begin{bmatrix}
 	y_p(t) \\ y_c(t)      
 	\end{bmatrix}$~~~(16)

 	and calculate the passivity indices $\epsilon_0$, and $\delta_0$.\\
 	
 	Simplifying (16) and moving the terms to one side we have: \\
 	
 	$\dot{V}(t)\leq  (\epsilon_0-\nu_c)w_2^T(t)w_2(t)-(\nu_p-|\nu_p|-\epsilon_0) w_1^T(t)w_1(t)-(\beta(\nu_p)-\delta_0)y^T_c(t)y_c(t)\\~~~~~~~~~~~~-(\rho_p+\nu_c-\frac{1}{4}-\delta_0)y^T_p(t)y_p(t)+2\nu_pw^T_1(t)y_c(t)-2\nu_cw^T_2(t)y_p(t)\\~~~~~~~~~~~\leq 0 $
 	
 	As a result we have to show:\\
 	
 	$\begin{bmatrix}
 	w^T_1(t) & y_c^T(t)      
 	\end{bmatrix} M
 	\begin{bmatrix}
 	w_1(t) \\ y_c(t)      
 	\end{bmatrix} + \begin{bmatrix}
 	w^T_2(t) & y^T_p(t)      \end{bmatrix} N \begin{bmatrix}
 	w_2(t) \\ y_p(t)      \end{bmatrix} \leq 0$,

 	where  $M=\begin{bmatrix}
 	-(\nu_p-|\nu_p|-\epsilon_0) & \nu_p\\
 	\nu_p &-(\beta(\nu_p)-\delta_0)
 	\end{bmatrix}$ and  $N=\begin{bmatrix}
 	(\epsilon_0-\nu_c) & -\nu_c\\
 	-\nu_c& -(\rho_p+\nu_c-\frac{1}{4}-\delta_0) \end{bmatrix}$.\\\\ 
 	For matrices M and N to be negative semi-definite, they need to meet the following conditions:
 	
 	\begin{align*}
 	\epsilon_0 < \nu_c\\
 	\epsilon_0 <  \nu_p-|\nu_p|\\
 	\delta_0 < \beta(\nu_p)\\
 	\delta_0 < \rho_p+\nu_c-\frac{1}{4}\\
 	\nu_p^2\leq (\beta(\nu_p)-\delta_0)(\nu_p-|\nu_p|-\epsilon_0)\\
 	\nu_c^2\leq -(\epsilon_0-\nu_c)(\rho_p+\nu_c-\frac{1}{4}-\delta_0)
 	\end{align*}
 	
 	which also prove the theorem.
 \end{proof}
 \subsubsection{Passivity for Figure \ref{fig:ETNCSCS} from $w_1 \to y_p$ (Event-triggering condition on the Controller's Output)}
 \begin{theorem}	
 	The networked control system given in figure \ref{fig:ETNCSCS} where $G_p$ and $G_c$ have passivity indices $\nu_p$, $\rho_p$, $\nu_c$, and $\rho_c$ and the triggering instance $t_{k}$ is explicitly determined by the condition $||e_c(t)||_2^2>\delta ||y_c(t)||_2^2$ with $\delta  \in(0,1]$, and $w_2=0$, is passive from $w_1 \to y_p$ meaning:\\
 	\begin{align*}
 	\dot{V}(t) \leq w_1^T(t)y_p(t),
 	\end{align*}
 	
 	if:
 	\begin{align*}
 	\rho_c \geq \delta\\
 	\nu_p = 0\\
 	\rho_p + \nu_c \geq \frac{1}{4}\\
 	\end{align*}	 
 \end{theorem}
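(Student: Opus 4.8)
The plan is to reuse the QSR-dissipativity characterization already established above for the controller-side triggering configuration of Figure \ref{fig:ETNCSCS}, namely
\[
\dot{V}(t) \leq w^T(t)Rw(t)+2w^T(t)Sy(t)+y^T(t)Qy(t),
\]
with the matrices $R$, $S$, $Q$ and the piecewise function $\beta(\nu_p)$ given there, and then to specialize it to the two standing hypotheses $\nu_p=0$ and $w_2=0$. No new energy balance needs to be derived; everything follows by substitution and by discarding nonnegative penalty terms.

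First I would substitute $\nu_p=0$ into the matrices. Because $\nu_p-|\nu_p|=0$, the $(1,1)$ block of $R$ vanishes, and because the off-diagonal block of $S$ equals $\nu_p I$, that coupling vanishes as well, leaving $S$ lower triangular with $-\nu_c I$ as its only surviving off-diagonal entry. Selecting the $\nu_p=0$ branch of the piecewise definition gives $\beta(\nu_p)=\rho_c-\delta$, so the $(2,2)$ block of $Q$ becomes $-(\rho_c-\delta)I$ while its $(1,1)$ block remains $-(\rho_p+\nu_c-\frac{1}{4})I$.

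Next I would impose $w_2=0$ and expand the three quadratic forms. The term $w^T(t)Rw(t)$ collapses to zero: its $w_1$ contribution is killed by $\nu_p=0$ and its $w_2$ contribution by $w_2=0$. In the cross term $2w^T(t)Sy(t)$ the only entry pairing $w_1$ with $y_c$ is the $\nu_p I$ block, now zero, so this term reduces exactly to $w_1^T(t)y_p(t)$. The output form becomes $y^T(t)Qy(t)=-(\rho_p+\nu_c-\frac{1}{4})\,y_p^T(t)y_p(t)-(\rho_c-\delta)\,y_c^T(t)y_c(t)$. Collecting these yields
\[
\dot{V}(t) \leq w_1^T(t)y_p(t)-\left(\rho_p+\nu_c-\tfrac{1}{4}\right)y_p^T(t)y_p(t)-(\rho_c-\delta)\,y_c^T(t)y_c(t).
\]

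The final step is simply to drop the two quadratic penalties. Under the stated hypotheses $\rho_c\geq\delta$ and $\rho_p+\nu_c\geq\frac{1}{4}$, both coefficients $\rho_c-\delta$ and $\rho_p+\nu_c-\frac{1}{4}$ are nonnegative, so each penalty term is $\leq 0$ and deleting them only weakens the bound, giving $\dot{V}(t)\leq w_1^T(t)y_p(t)$ as claimed. I do not expect a genuine obstacle: in contrast to the plant-side analysis, where a possibly nonzero $\nu_p$ produced a residual $2\nu_p w_1^T y_c$ coupling that had to be absorbed through negative-semidefiniteness of a $2\times 2$ matrix, the hypothesis $\nu_p=0$ is precisely what eliminates that coupling and makes the cross term collapse cleanly to $w_1^T y_p$. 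The only point requiring care is consistently using the $\nu_p=0$ branch of $\beta(\nu_p)$ throughout.
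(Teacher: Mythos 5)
Your proposal is correct and follows essentially the same route as the paper: both start from the QSR-dissipativity inequality already established for the controller-side triggering configuration, specialize to $w_2=0$, and discard the nonpositive quadratic penalties under the stated sign conditions. The only difference is cosmetic --- the paper carries a symbolic $\nu_p$ a bit longer and bounds the residual $2\nu_p w_1^T(t)y_c(t)$ coupling via $2\nu_p w_1^T(t)y_c(t)\leq|\nu_p|w_1^T(t)w_1(t)+|\nu_p|y_c^T(t)y_c(t)$ before imposing $\nu_p=0$, whereas you substitute $\nu_p=0$ at the outset so that coupling never appears.
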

 
 \begin{proof}
 	
 	We know that the setup is \emph{QSR-disspative} such that:
 	
 	Hence, we have shown that:
 	
 	\begin{align*}
 	&\dot{V}(t) \leq w^T(t)Rw(t)+2w^T(t)Sy(t)+y^T(t)Qy(t)
 	\end{align*}
 	
 	where $w(t)=\begin{bmatrix}
 	w_1(t) \\ w_2(t)   
 	\end{bmatrix}$,
 	$y(t)=\begin{bmatrix}
 	y_p(t) \\ y_c(t)   
 	\end{bmatrix}$ and:\\
 	
 	$S=\begin{bmatrix}
 	\frac{1}{2}I & \nu_pI\\
 	-\nu_cI & \frac{1}{2}I      
 	\end{bmatrix}$, $R=\begin{bmatrix}
 	-(\nu_p-|\nu_p|)I & 0I\\
 	0I & -\nu_cI      
 	\end{bmatrix}$, and $Q=\begin{bmatrix}
 	-(\rho_p+\nu_c-\frac{1}{4})I& 0I\\
 	0I & -\beta(\nu_p)I\end{bmatrix}$
 	and:\\
 	
 	\[\beta(\nu_p) =
 	\begin{cases}
 	\rho_c-\nu_p-\delta(1+\nu_p)      & \quad \text{if } \nu_p > 0\\
 	\rho_c-\delta       & \quad \text{if } \nu_p = 0\\
 	\rho_c+2\nu_p-\delta(1-3\nu_p)  & \quad \text{if } \nu_p < 0\\
 	\end{cases}
 	\]
 	
 	Given that $w_2=0$, we need to show that \\
 	
 	$\dot{V}(t)	\leq 
 	w_1^T(t)y_p(t)-(\nu_p-|\nu_p|) w_1^T(t)w_1(t)-\beta(\nu_p)y^T_c(t)y_c(t)-(\rho_p+\nu_c-\frac{1}{4})y^T_p(t)y_p(t)+2\nu_pw^T_1(t)y_c(t)\\~~~~~~~~~~~~ \leq w_1^T(t)y_p(t)$.~~~~~~~~~~~~~~~~~~~~~~~~~~~~~~~~~~~~~~~~~~~~~~~~~~~~~~~~~~~~~~~~~~~~~~~~~~~~~~~~~~~~~(17)\\
 	
 	Simplifying (17) and moving the terms to one side, and using the relation $2\nu_pw_1^T(t)y_c(t) \leq |\nu_p|y_c^T(t)y_c(t)+|\nu_p|w_1^T(t)w_1(t)$: \\
 	
 	$~~~~~\dot{V}(t)	\leq 
 	-(\nu_p-|\nu_p|) w_1^T(t)w_1(t)-\beta(\nu_p)y^T_c(t)y_c(t)-(\rho_p+\nu_c-\frac{1}{4})y^T_p(t)y_p(t)+2\nu_pw^T_1(t)y_c(t)\\~~~~~~~~~~~~~~~~~	\leq 
 	-(\nu_p-2|\nu_p|) w_1^T(t)w_1(t)-(\beta(\nu_p)-|\nu_p|)y^T_c(t)y_c(t)-(\rho_p+\nu_c-\frac{1}{4})y^T_p(t)y_p(t)\\~~~~~~~~~~~~~~~~~\leq 0 $\\

 	it is easy to see that the system is passive if:
 	\begin{align*}
 	\beta(\nu_p) \geq 0\\
 	\nu_p = 0\\
 	\rho_p + \nu_c \geq \frac{1}{4}
 	\end{align*}
 \end{proof}
 \begin{example}
 	For the feedback interconnection given in example 4, we know that $\rho_p=-0.2$, $\nu_p=0$, $\rho_c=0.3$ and $\nu_c=1$. By choosing $\delta = 0.20$, all conditions given in theorem 7 hold, and accordingly we can say that the networked control system is passive. Figure \ref{fig:PassivityYcEc} shows that the relation $\int_{0}^{T}(w_1^T(t)y_p(t)) dt$ holds.
 	
 	\begin{figure}[!t]
 		\centering
 		\includegraphics[scale = .5]{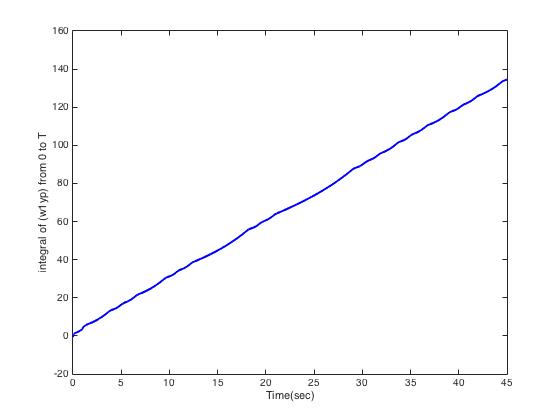}
 		\caption{Simulation result for example 6 where $w_1$ is a step function.}
 		\label{fig:PassivityYcEc}
 	\end{figure}
 \end{example}	

\subsubsection{Passivity and Passivity indices for Figure \ref{fig:ETNCSCS} from $w_1 \to y_p$ (Event-triggering condition on the Controller's Output) }
\begin{theorem}	
	The networked control system given in figure \ref{fig:ETNCSCS} where $G_p$ and $G_c$ have passivity indices $\nu_p$, $\rho_p$, $\nu_c$, and $\rho_c$ and the triggering instance $t_{k}$ is explicitly determined by the condition $||e_c(t)||_2^2>\delta ||y_c(t)||_2^2$ with $\delta  \in(0,1]$, and $w_2=0$, is passive from $w_1 \to y_p$ with input passivity index $\epsilon_0$ and output passivity index $\delta_0$ meaning:\\
	\begin{align*}
	\dot{V}(t) \leq w_1^T(t)y_p(t)- \epsilon_0 w^T_1(t)w_1(t) -\delta_0 y_p^T(t)y_p(t), 
	\end{align*}
	
	where
	\begin{align*}
	\epsilon_0 <\min(\nu_p,\nu_p-|\nu_p|-\frac{\nu_p^2}{\beta(\nu_p)}) \\
	0\leq \delta_0 \leq 	\rho_p+\nu_c-\frac{1}{4}\\\\
	\end{align*}	 
\end{theorem}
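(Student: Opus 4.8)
The plan is to reuse the controller-side QSR-dissipativity inequality already established for figure \ref{fig:ETNCSCS}, namely $\dot V(t)\le w^T(t)Rw(t)+2w^T(t)Sy(t)+y^T(t)Qy(t)$ with the $R,S,Q$ listed there, and specialize it to $w_2=0$. Setting $w_2=0$ collapses the $w^TRw$ term to $-(\nu_p-|\nu_p|)w_1^T(t)w_1(t)$, the $2w^TSy$ term to $w_1^T(t)y_p(t)+2\nu_p w_1^T(t)y_c(t)$, and leaves $y^TQy=-(\rho_p+\nu_c-\tfrac14)y_p^T(t)y_p(t)-\beta(\nu_p)y_c^T(t)y_c(t)$. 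This produces exactly the intermediate scalar bound on $\dot V(t)$ in the vectors $w_1,y_p,y_c$ that opens the proof of the preceding theorem; the difference now is that I will extract passivity indices from it rather than merely absorbing the cross term.

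Next I would subtract the target supply rate $w_1^T(t)y_p(t)-\epsilon_0 w_1^T(t)w_1(t)-\delta_0 y_p^T(t)y_p(t)$ from the bound and require the residual to be nonpositive. The genuine $w_1^T(t)y_p(t)$ cancels, and collecting terms reduces the claim to showing
\[(\epsilon_0-(\nu_p-|\nu_p|))\,w_1^Tw_1+2\nu_p\,w_1^Ty_c-\beta(\nu_p)\,y_c^Ty_c-\bigl(\rho_p+\nu_c-\tfrac14-\delta_0\bigr)y_p^Ty_p\le 0.\]
The $y_p$ contribution is disposed of immediately: any choice $0\le\delta_0\le\rho_p+\nu_c-\tfrac14$ makes its coefficient nonpositive, so that term is harmless for every $y_p$.

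The remaining terms in $w_1$ and $y_c$ form a quadratic in $\begin{bmatrix} w_1^T & y_c^T\end{bmatrix}^T$ with symmetric matrix $M=\begin{bmatrix}\epsilon_0-(\nu_p-|\nu_p|) & \nu_p\\ \nu_p & -\beta(\nu_p)\end{bmatrix}$, mirroring the device used in Theorem 5 for the plant-side case. Requiring $M\preceq 0$ reduces to the leading-minor conditions $\epsilon_0\le\nu_p-|\nu_p|$, $\beta(\nu_p)\ge 0$, and $\det M=\beta(\nu_p)\bigl((\nu_p-|\nu_p|)-\epsilon_0\bigr)-\nu_p^2\ge 0$. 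Assuming $\beta(\nu_p)>0$, the determinant inequality solves to $\epsilon_0\le\nu_p-|\nu_p|-\frac{\nu_p^2}{\beta(\nu_p)}$; since $|\nu_p|\ge 0$ and $\frac{\nu_p^2}{\beta(\nu_p)}\ge 0$ push this quantity below $\nu_p$, the diagonal condition is automatic and the stated $\min(\nu_p,\,\nu_p-|\nu_p|-\frac{\nu_p^2}{\beta(\nu_p)})$ form is recorded only for uniformity with the earlier results.

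The step I expect to require the most care is the indefinite cross term $2\nu_p w_1^T(t)y_c(t)$. Unlike Theorem 7, where it was dominated crudely by $|\nu_p|\bigl(w_1^Tw_1+y_c^Ty_c\bigr)$ at the cost of a conservative certificate, here it must be retained inside $M$ so that the determinant condition yields the sharp index $\epsilon_0$. I would also dispatch the degenerate cases separately: when $\nu_p\ge 0$ the coefficient $\nu_p-|\nu_p|$ vanishes and the bound becomes $\epsilon_0\le-\nu_p^2/\beta(\nu_p)\le 0$, so the interconnection is input-passive only in a shortage sense, while $\beta(\nu_p)=0$ forces $\nu_p=0$ and must be excluded for the division to be meaningful. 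Finally I would confirm that $M\preceq 0$ together with $\delta_0\le\rho_p+\nu_c-\tfrac14$ indeed delivers $\dot V(t)\le w_1^T(t)y_p(t)-\epsilon_0 w_1^T(t)w_1(t)-\delta_0 y_p^T(t)y_p(t)$ for all $w_1$ and all $y_p$, completing the argument.
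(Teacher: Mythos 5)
Your proposal is correct and follows essentially the same route as the paper: specialize the controller-side QSR inequality to $w_2=0$, subtract the target supply rate, absorb the $y_p$ term via $0\le\delta_0\le\rho_p+\nu_c-\tfrac14$, and enforce negative semidefiniteness of the same $2\times 2$ matrix $M$ in $(w_1,y_c)$, whose determinant condition yields $\epsilon_0\le\nu_p-|\nu_p|-\nu_p^2/\beta(\nu_p)$. Your added remarks on the redundancy of the $\min$ with $\nu_p$ and on the degenerate cases $\nu_p\ge 0$ and $\beta(\nu_p)=0$ are accurate refinements but do not change the argument.
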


\begin{proof}
	
	We know that the setup is \emph{QSR-disspative} such that:

	\begin{align*}
	&\dot{V}(t) \leq w^T(t)Rw(t)+2w^T(t)Sy(t)+y^T(t)Qy(t),
	\end{align*}
	
	where $w(t)=\begin{bmatrix}
	w_1(t) \\ w_2(t)   
	\end{bmatrix}$,
	$y(t)=\begin{bmatrix}
	y_p(t) \\ y_c(t)   
	\end{bmatrix}$ and:\\
	
	$S=\begin{bmatrix}
	\frac{1}{2}I & \nu_pI\\
	-\nu_cI & \frac{1}{2}I      
	\end{bmatrix}$, $R=\begin{bmatrix}
	-(\nu_p-|\nu_p|)I & 0I\\
	0I & -\nu_cI      
	\end{bmatrix}$, and $Q=\begin{bmatrix}
	-(\rho_p+\nu_c-\frac{1}{4})I& 0I\\
	0I & -\beta(\nu_p)I\end{bmatrix}$
	and:\\
	
	\[\beta(\nu_p) =
	\begin{cases}
	\rho_c-\nu_p-\delta(1+\nu_p)      & \quad \text{if } \nu_p > 0\\
	\rho_c-\delta       & \quad \text{if } \nu_p = 0\\
	\rho_c+2\nu_p-\delta(1-3\nu_p)  & \quad \text{if } \nu_p < 0\\
	\end{cases}
	\]
	
	Given that $w_2=0$, we need to show that \\
	
	$\dot{V}(t)	\leq  w_1^T(t)y_p(t)-(\nu_p-|\nu_p|) w_1^T(t)w_1(t)-\beta(\nu_p)y^T_c(t)y_c(t)-(\rho_p+\nu_c-\frac{1}{4})y^T_p(t)y_p(t)+2\nu_pw^T_1(t)y_c(t)\\~~~~~~~~~~~~~~ \leq w_1^T(t)y_p(t)- \epsilon_0 w^T_1(t)w_1(t) -\delta_0 y_p^T(t)y_p(t) $.~~~~~~~~~~~~~~~~~~~~~~~~~~~~~~~~~~~~~~~~~~~~~~~~~~~~~~~~~~~~~~~~~~~~~~~~~~~~(18)\\
	
	Simplifying (18) and moving the terms to one side we have: \\
	
	$\dot{V}(t)\leq  (\epsilon_0-\nu_p+|\nu_p|) w_1^T(t)w_1(t)-\beta(\nu_p)y^T_c(t)y_c(t)-(\rho_p+\nu_c-\frac{1}{4}-\delta_0)y^T_p(t)y_p(t)+2\nu_pw^T_1(t)y_c(t)\\~~~~~~~~~~~\leq 0 $.\\
	
	We need to show that
	
	$-(\rho_p+\nu_c-\frac{1}{4}-\delta_0)y^T_p(t)y_p(t) + \begin{bmatrix}
	w^T_1(t) & y_c^T(t)      
	\end{bmatrix} M
	\begin{bmatrix}
	w_1(t) \\ y_c(t)      
	\end{bmatrix} \leq 0$,
	
	where $M=\begin{bmatrix}
	(\epsilon_0-\nu_p+|\nu_p|) & \nu_p\\
	\nu_p &-\beta(\nu_p)
	\end{bmatrix}$ 
	it is easy to see that for the relation to hold, the followings should be met:
	\begin{align*}
	\beta(\nu_p) > 0\\
	\nu_p-|\nu_p| > \epsilon_0\\
	\rho_p+\nu_c-\frac{1}{4}\geq \delta_0\\
	(\nu_p-|\nu_p|-\epsilon_0)\beta(\nu_p) \geq \nu_p^2\\
	\end{align*}	
	
	hence we can conclude that if $\epsilon_0$ and $\delta_0$ are determined as mentioned in the theorem then:
	
	$\dot{V}(t)	\leq w_1^T(t)y_p(t)- \epsilon_0 w^T_1(t)w_1(t) -\delta_0 y_p^T(t)y_p(t) $ \\	
	for $\forall w_1$ and $\forall y_p$
\end{proof}
\subsection{\emph{QSR-disspativity} and Passivity Analysis - Event-triggering condition on the Plant's Side and Controller's Side - Figure \ref{fig:ETNCSPSCS})}
\subsubsection{\emph{QSR-dissipativity} for NCS (Event-triggering condition on the Plant's Output and Controller's Output - Figure \ref{fig:ETNCSPSCS})}

\begin{theorem}
	Consider the feedback interconnection of two systems $G_p$ and $G_c$ in figure \ref{fig:ETNCSPSCS} with respective passivity indices of $\nu_p$, $\rho_p$, $\nu_c$ and $\rho_c$. If the event instances $t_{p_k}$ and $t_{c_k}$ are explicitly determined by the conditions $||e_p(t)||_2^2>\delta_p ||y_p(t)||_2^2$ with $\delta_p  \in(0,1]$, and $||e_c(t)||_2^2>\delta_c ||y_c(t)||_2^2$ with $\delta_c  \in(0,1]$, then the event-triggered networked control system is \emph{QSR-dissipative} with respect to the inputs $w(t)=\begin{bmatrix}
	w_1(t) \\ w_2(t)   
	\end{bmatrix}$, and the outputs
	$y(t)=\begin{bmatrix}
	y_p(t) \\ y_c(t)   
	\end{bmatrix}$, and satisfies the relation:
	\begin{align*}
	&\dot{V}(t) \leq w^T(t)Rw(t)+ 2w^T(t)Sy(t)+y^T(t)Qy(t)
	\end{align*}
	where:\\
	
	$S=\begin{bmatrix}
	\frac{1}{2}I & \nu_pI\\
	-\nu_cI & \frac{1}{2}I      
	\end{bmatrix}$, $R=\begin{bmatrix}
	-(\nu_p-|\nu_p|)I & 0I\\
	0I & -(\nu_c-|\nu_c|)I      
	\end{bmatrix}$, and $Q=\begin{bmatrix}
	-(\beta(\nu_c)-\frac{1}{4})I & 0I\\
	0I & -(\beta(\nu_p)-\frac{1}{4})I\end{bmatrix}$
	and:\\
	
	\[\beta(\nu_p) =
	\begin{cases}
	\rho_c-\nu_p-\delta_c(1+\nu_p)      & \quad \text{if } \nu_p > 0\\
	\rho_c-\delta_c       & \quad \text{if } \nu_p = 0\\
	\rho_c+2\nu_p-\delta_c(1-3\nu_p)  & \quad \text{if } \nu_p < 0\\
	\end{cases}
	\]
	
	\[\beta(\nu_c) =
	\begin{cases}
	\rho_p-\nu_c-\delta_p(1+\nu_c)      & \quad \text{if } \nu_c > 0\\
	\rho_p-\delta_p       & \quad \text{if } \nu_c = 0\\
	\rho_p+2\nu_c-\delta_p(1-3\nu_c)  & \quad \text{if } \nu_c < 0\\
	\end{cases}
	\]
	
	Additionally if $Q<0$ meaning $\beta(\nu_c)>\frac{1}{4}$ and $\beta(\nu_p)>\frac{1}{4}$ then the interconnection is $L_2$-stable.
	
\end{theorem}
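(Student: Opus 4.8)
The plan is to follow the energy-aggregation argument of the two single-sided proofs (figures \ref{fig:ETNCSPS} and \ref{fig:ETNCSCS}), now carrying both sampling errors at once. First I would write the IF-OFP dissipation inequalities for $G_p$ and $G_c$, take the composite storage function $V(t)=V_p(t)+V_c(t)$, and substitute the two-sided interconnection relations of figure \ref{fig:ETNCSPSCS}, namely $u_p(t)=w_1(t)-y_c(t_k)=w_1(t)-y_c(t)+e_c(t)$ and $u_c(t)=w_2(t)+y_p(t_k)=w_2(t)+y_p(t)-e_p(t)$. Expanding $\dot V_p+\dot V_c$, the bilinear output terms $-y_c^Ty_p$ and $+y_p^Ty_c$ arising from $u_p^Ty_p$ and $u_c^Ty_c$ cancel, while the genuine input-output cross terms $w_1^Ty_p$, $w_2^Ty_c$, $2\nu_pw_1^Ty_c$ and $-2\nu_cw_2^Ty_p$ assemble precisely into the claimed matrix $S$.

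The next step is to dispose of the error terms, and the structural fact that makes this tractable is that the two errors never mix: since $u_p$ carries only $e_c$ and $u_c$ carries only $e_p$, expanding $-\nu_pu_p^Tu_p$ and $-\nu_cu_c^Tu_c$ produces no $e_p^Te_c$ term, so $e_c$ touches only $\{w_1,y_c,y_p\}$ and $e_p$ only $\{w_2,y_p,y_c\}$. I would bound the input-error cross terms by Young's inequality, $-2\nu_pw_1^Te_c\le|\nu_p|w_1^Tw_1+|\nu_p|e_c^Te_c$ and $2\nu_cw_2^Te_p\le|\nu_c|w_2^Tw_2+|\nu_c|e_p^Te_p$, which generates the $|\nu_p|,|\nu_c|$ entries and hence $R$. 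The two output-error cross terms are then completed as squares, $e_c^Ty_p=-(e_c-\tfrac12y_p)^2+e_c^Te_c+\tfrac14y_p^Ty_p$ and $-e_p^Ty_c=-(e_p+\tfrac12y_c)^2+e_p^Te_p+\tfrac14y_c^Ty_c$; discarding the two nonpositive squares deposits exactly one factor $\tfrac14$ onto each diagonal of $Q$, which is the source of the $-\tfrac14$ inside both diagonal blocks.

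What remains are the self-interaction bundles $2\nu_cy_p^Te_p-\nu_ce_p^Te_p$ on the plant-output channel and, by symmetry, $2\nu_py_c^Te_c-\nu_pe_c^Te_c$ on the controller-output channel, together with the leftover $|\nu_c|e_p^Te_p$ and $|\nu_p|e_c^Te_c$ pieces. Here I would run exactly the sign-based case split of the single-sided proofs, invoking the triggering bounds $e_p^Te_p\le\delta_p y_p^Ty_p$ and $e_c^Te_c\le\delta_c y_c^Ty_c$: for $\nu_c>0$ the positive-semidefinite quadratic-form identity used in the plant-side proof absorbs $2\nu_cy_p^Te_p-\nu_ce_p^Te_p$ into $2\nu_cy_p^Ty_p$, while for $\nu_c\le0$ a direct Young bound suffices, collapsing the $y_p^Ty_p$ coefficient to $-(\beta(\nu_c)-\tfrac14)$; the mirror computation on the $e_c$ channel yields $-(\beta(\nu_p)-\tfrac14)$. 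Assembling everything reproduces the stated $Q$, $R$, $S$.

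I expect the only real obstacle to be bookkeeping: running the two simultaneous case splits (on $\mathrm{sign}(\nu_c)$ for the $e_p$ bundle and on $\mathrm{sign}(\nu_p)$ for the $e_c$ bundle) without cross-contamination, and making sure the $+e_p^Te_p$ and $+e_c^Te_c$ produced by the square-completions are each charged against the correct threshold, $\delta_p$ versus $\delta_c$. The absence of an $e_p^Te_c$ term guarantees the two analyses genuinely superpose, so the combined result is the symmetric overlay of the two single-sided cases with one extra $\tfrac14$ per diagonal. Finally, the $L_2$-stability claim follows at once from the QSR criterion of \cite{hill1976stability}: with $R$ and $S$ finite, $Q<0$ is sufficient, and $Q$ is negative definite exactly when both diagonal blocks are, i.e. when $\beta(\nu_c)>\tfrac14$ and $\beta(\nu_p)>\tfrac14$.
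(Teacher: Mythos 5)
Your proposal follows the paper's proof essentially step for step: the composite storage function $V_p+V_c$, the substitutions $u_p=w_1+e_c-y_c$ and $u_c=w_2+y_p-e_p$, the two Young bounds on the input--error cross terms yielding $R$, the two square completions depositing a $\tfrac14$ on each diagonal of $Q$, and the sign-based case splits on $\nu_c$ and $\nu_p$ (which the paper itself defers to ``the same steps mentioned in previous proofs''). Your explicit observation that no $e_p^Te_c$ term arises, so the two single-sided analyses superpose cleanly, is a correct and slightly more careful articulation of what the paper does implicitly; otherwise the arguments coincide.
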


\begin{proof} 
	\label{thm:NCSYpEpYcEc}
	
	Given systems $G_p$ and $G_c$ with passivity indices $\nu_p$, $\rho_p$, $\nu_c$ and $\rho_c$, there exists $V_p(t)$ and $V_c(t)$ such that:
	
	\begin{align*}
	& \dot{V}_p(t) \leq u^T_p(t)y_p(t)-\nu_pu^T_p(t)u_p(t)-\rho_p y^T_p(t)y_p(t) 
	\\&\dot{V}_c(t) \leq u^T_c(t)y_c(t)-\nu_c u^T_c(t)u_c(t)-\rho_c y^T_c(t)y_c(t)\\
	\end{align*}
	
	Additionally, according to the setup portrayed in figure \ref{fig:ETNCSPSCS}, the following relationships stand for $t\in [t_{p_k},t_{p_{k+1}}) \cup [t_{c_k},t_{c_{k+1}})$ :
	
	\begin{align*}
	& u_p(t)=w_1(t)-y_c(t_k)\\
	& u_c(t)=w_2(t)+y_p(t_k)\\
	&e_p(t)=y_p(t)-y_p(t_k)\\
	&e_c(t)=y_c(t)-y_c(t_k)\\
	&u_c(t)=w_2(t)+y_p(t)-e_p(t)\\
	&u_p(t)=w_1(t)+e_c(t)-y_c(t)
	\end{align*}
	and we design the triggering conditions based on the following rules ($||e_p(t)||_2^2>\delta_p||y_p(t)||_2^2$, $||e_c(t)||_2^2> \delta_c||y_c(t)||_2^2$):
	
	\begin{align*}
	& \langle e_p,e_p\rangle>\delta_p \langle y_p,y_p\rangle, ~ 0 < \delta_p \leq 1\\
	& \langle e_c,e_c\rangle>\delta_c \langle y_c,y_c\rangle, ~ 0 < \delta_c \leq 1
	\end{align*} 
	We consider the following storage function for the interconnection:
	
	\begin{align*}
	& V(t) = V_p(t) + V_c(t),
	\end{align*} 
	
	hence, we have:
	\begin{align*}
	& \dot{V}(t)=\dot{V}_p(t)+\dot{V}_c(t)\\&~~~~~~ \leq u^T_p(t)y_p(t)-\nu_pu^T_p(t)u_p(t)-\rho_p y^T_p(t)y_p(t)+u^T_c(t)y_c(t)-\nu_c u^T_c(t)u_c(t)-\rho_c y^T_c(t)y_c(t).
	\\
	\end{align*} 
	
	We know that $u_p(t)=w_1(t)+e_c(t)-y_c(t)$, $u_c(t)=w_2(t)+y_p(t)-e_p(t)$, as a result for any $t \in [t_{p_k},t_{P_{k+1}})$ and $t \in [t_{c_k},t_{c_{k+1}})$ we have:
	
	\begin{align*}
	&\dot{V}(t) \leq (w_1(t)+e_c(t)-y_c(t))^T y_p(t)-\nu_p(w_1(t)+e_c(t)-y_c(t))^T(w_1(t)+e_c(t)-y_c(t))-\rho_p y^T_p(t)y_p(t)\\&+(w_2(t)+y_p(t)-e_p(t))^Ty_c(t)-\nu_c (w_2(t)+y_p(t)-e_p(t))^T(w_2(t)+y_p(t)-e_p(t))-\rho_c y^T_c(t)y_c(t)
	\\&= w_1^T(t)y_p(t)+w_2^T(t)y_c(t)-\nu_pw_1^T(t)w_1(t)-\nu_cw_2^T(t)w_2(t)-(\rho_p+\nu_c)y^T_p(t)y_p(t)-(\rho_c+\nu_p)y^T_c(t)y_c(t)\\&+2\nu_pw^T_1(t)y_c(t)-2\nu_cw^T_2(t)y_p(t)+2\nu_cy^T_p(t)e_p(t)+2\nu_cw^T_2(t)e_p(t)-y^T_c(t)e_p(t)-\nu_ce_p^T(t)e_p(t)
	\\&+2\nu_py^T_c(t)e_c(t) -2\nu_pw^T_1(t)e_c(t)+y^T_p(t)e_c(t)-\nu_pe_c^T(t)e_c(t)
	\end{align*} \\
	Given that $2\nu_cw^T_2(t)e_p(t) \leq |\nu_c|w_2^T(t)w_2(t)+|\nu_c|e_p^T(t)e_p(t)$ and $-2\nu_pw^T_1(t)e_c(t) \leq |\nu_p|w_1^T(t)w_1(t)+|\nu_p|e_c^T(t)e_c(t)$ we have:\\
	
	$\dot{V}(t) \leq 2\begin{bmatrix}
	w^T_1(t) & w^T_2(t)      
	\end{bmatrix}\begin{bmatrix}
	\frac{1}{2} & \nu_p\\
	-\nu_c & \frac{1}{2}      
	\end{bmatrix}\begin{bmatrix}
	y_p(t) \\ y_c(t)      
	\end{bmatrix}+\begin{bmatrix}
	w^T_1(t) & w^T_2(t)      
	\end{bmatrix}\begin{bmatrix}
	|\nu_p|-\nu_p & 0\\
	0 & |\nu_c|-\nu_c      
	\end{bmatrix}\begin{bmatrix}
	w_1(t) \\ w_2(t)      
	\end{bmatrix}\\
	+\begin{bmatrix}
	y^T_p(t) & y^T_c(t)      
	\end{bmatrix}\begin{bmatrix}
	-(\rho_p+\nu_c) & 0\\
	0 & -(\rho_c+\nu_p)     
	\end{bmatrix}\begin{bmatrix}
	y_p(t) \\ y_c(t)      
	\end{bmatrix}+2\nu_cy^T_p(t)e_p(t)+2\nu_cw^T_2(t)e_p(t)-y^T_c(t)e_p(t)\\-\nu_ce_p^T(t)e_p(t)+2\nu_py^T_c(t)e_c(t) -2\nu_pw^T_1(t)e_c(t)+y^T_p(t)e_c(t)-\nu_pe_c^T(t)e_c(t)$\\
	
	Additionally we have: $-y^T_c(t)e_p(t)=-( e_p(t) + \frac{1}{2} y_c(t))^2+ e_p^T(t)e_p(t) + 
	\frac{1}{4} y_c^T(t)y_c(t)$ , and $e_p^T(t)e_p(t) \leq \delta_p y_p^T(t)y_p(t)$,$+y^T_p(t)e_c(t)=-( e_c(t) - \frac{1}{2} y_p(t))^2+ e_c^T(t)e_c(t) + 
	\frac{1}{4} y_p^T(t)y_p(t)$ , and $e_c^T(t)e_c(t) \leq \delta_c y_c^T(t)y_c(t)$ so we have:

	$\dot{V}(t) \leq 2\begin{bmatrix}
	w^T_1(t) & w^T_2(t)      
	\end{bmatrix}\begin{bmatrix}
	\frac{1}{2} & \nu_p\\
	-\nu_c & \frac{1}{2}      
	\end{bmatrix}\begin{bmatrix}
	y_p(t) \\ y_c(t)      
	\end{bmatrix}+\begin{bmatrix}
	w^T_1(t) & w^T_2(t)      
	\end{bmatrix}\begin{bmatrix}
	|\nu_p|-\nu_p & 0\\
	0 & |\nu_c|-\nu_c      
	\end{bmatrix}\begin{bmatrix}
	w_1(t) \\ w_2(t)      
	\end{bmatrix}\\
	+\begin{bmatrix}
	y^T_p(t) & y^T_c(t)      
	\end{bmatrix}\begin{bmatrix}
	-(\rho_p+\nu_c-\delta_p-\frac{1}{4}) & 0\\
	0 & -(\rho_c+\nu_p-\delta_c-\frac{1}{4})     
	\end{bmatrix}\begin{bmatrix}
	y_p(t) \\ y_c(t)      
	\end{bmatrix}+2\nu_cy^T_p(t)e_p(t)+|\nu_c|e_p^T(t)e_p(t)\\-\nu_ce_p^T(t)e_p(t)+2\nu_py^T_c(t)e_c(t)+|\nu_p|e_c^T(t)e_c(t)-\nu_pe_c^T(t)e_c(t)~~~~~~~~~~~~~~~~~~~~~~~~~~~~~~~~~~~~~(19)\\$
	
	Given the fact that both $e_p^T(t)e_p(t) \leq \delta_p y_p^T(t)y_p(t)$, and $e_c^T(t)e_c(t) \leq \delta_c y_c^T(t)y_c(t)$ always hold for any $t\in [t_{p_k},t_{p_{k+1}}) \cup [t_{c_k},t_{c_{k+1}})$, if the triggering conditions are chosen based on the relations $||e_p(t)||_2^2>\delta_p ||y_p(t)||_2^2$ with $\delta_p  \in(0,1]$, and $||e_c(t)||_2^2> \delta_c ||y_c(t)||_2^2$ with $\delta_c  \in(0,1]$, and by taking the same stepped mentioned in previous proofs, one can easily see that  depending on the signs of $\nu_p$, and $\nu_c$ the entire system is \emph{QSR-dissipative}: 
	\begin{align*}
	&\dot{V}(t) \leq w^T(t)Rw(t)+2w^T(t)Sy(t)+y^T(t)Qy(t),
	\end{align*}
	where $w(t)=\begin{bmatrix}
	w_1(t) \\ w_2(t)   
	\end{bmatrix}$,
	$y(t)=\begin{bmatrix}
	y_p(t) \\ y_c(t)   
	\end{bmatrix}$ and:\\
	
	$S=\begin{bmatrix}
	\frac{1}{2}I & \nu_pI\\
	-\nu_cI & \frac{1}{2}I      
	\end{bmatrix}$, $R=\begin{bmatrix}
	-(\nu_p-|\nu_p|)I & 0I\\
	0I & -(\nu_c-|\nu_c|)I      
	\end{bmatrix}$, and $Q=\begin{bmatrix}
	-(\beta(\nu_c)-\frac{1}{4})I & 0I\\
	0I & -(\beta(\nu_p)-\frac{1}{4})I\end{bmatrix}$
	and:\\
	
	\[\beta(\nu_p) =
	\begin{cases}
	\rho_c-\nu_p-\delta_c(1+\nu_p)      & \quad \text{if } \nu_p > 0\\
	\rho_c-\delta_c       & \quad \text{if } \nu_p = 0\\
	\rho_c+2\nu_p-\delta_c(1-3\nu_p)  & \quad \text{if } \nu_p < 0\\
	\end{cases}
	\]
	
	\[\beta(\nu_c) =
	\begin{cases}
	\rho_p-\nu_c-\delta_p(1+\nu_c)      & \quad \text{if } \nu_c > 0\\
	\rho_p-\delta_p       & \quad \text{if } \nu_c = 0\\
	\rho_p+2\nu_c-\delta_p(1-3\nu_c)  & \quad \text{if } \nu_c < 0\\
	\end{cases}
	\]
\end{proof}	
\subsubsection{Simulation Examples for \emph{QSR-dissipativity} for NCS (Event-triggering condition on the Plant's Output and Controller's Output - Figure \ref{fig:ETNCSPSCS})}
\begin{example}
	The plant in figure \ref{fig:NCSYpEpYcEcEX1} is defined by the following model:	
	
	\begin{figure}[!t]
		\centering
		\includegraphics[scale = .65]{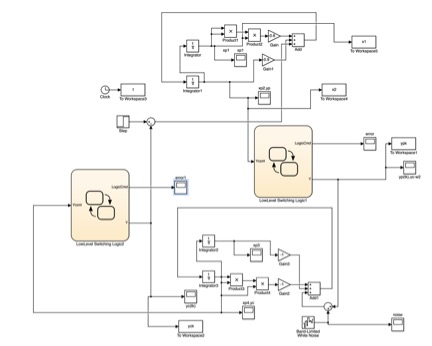}
		\caption{Simulink model for example 7.}
		\label{fig:NCSYpEpYcEcEX1}
	\end{figure}
	
	\begin{align*} 
	&\dot{x}_{p1}(t)=x_{p2}(t)\\
	&\dot{x}_{p2}(t)=-0.6x^3_{p1}(t)-0.9x_{p2}(t)+u_p(t)\\
	&y_p(t)=x_{p2}(t),
	\end{align*}
	
	where $\rho_p=0.9$ and $\nu_p=0$. And the model for the controller is the following:
	
	\begin{align*} 
	&\dot{x}_{p1}(t)=x_{p2}(t)\\
	&\dot{x}_{p2}(t)=-x_{p1}(t)-x^3_{p2}(t)+u_p(t)\\
	&y_p(t)=x_{p2}(t),
	\end{align*}
	
	where $\rho_c=1$ and $\nu_c=0$. The interconnection satisfies $\rho_c-\delta_c-\frac{1}{4}>0$, and $\rho_p-\delta_p - \frac{1}{4}>0$ by picking $\delta_p=0.6$, and $\delta_c=0.7$. $w1$ is a step signal, and $w2$ is white noise with power $0.02$, the simulation results are given in: figures  \ref{fig:NCSYpEpYcEcEX1XP1}, \ref{fig:NCSYpEpYcEcEX1XP2}, \ref{fig:NCSYpEpYcEcEX1Yc},\ref{fig:NCSYpEpYcEcEX1Yp}.
	\begin{figure}[!t]
		\centering
		\includegraphics[scale = .5]{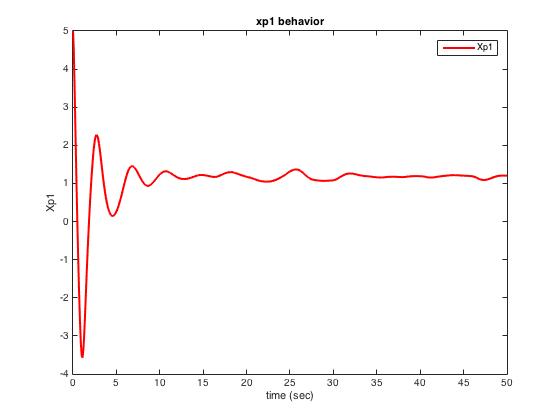}
		\caption{Simulation results for example 7.}
		\label{fig:NCSYpEpYcEcEX1XP1}
	\end{figure}
	\begin{figure}[!t]
		\centering
		\includegraphics[scale = .5]{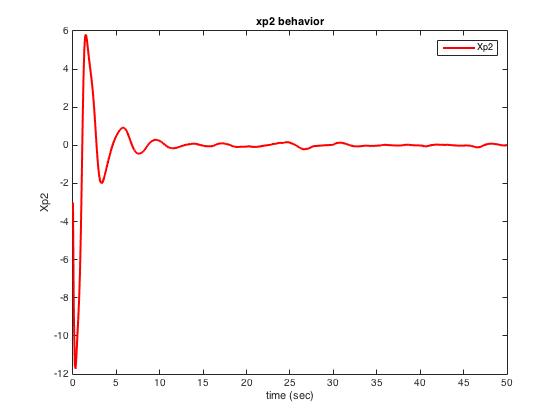}
		\caption{Simulation results for example 7.}
		\label{fig:NCSYpEpYcEcEX1XP2}
	\end{figure}
	\begin{figure}[!t]
		\centering
		\includegraphics[scale = .5]{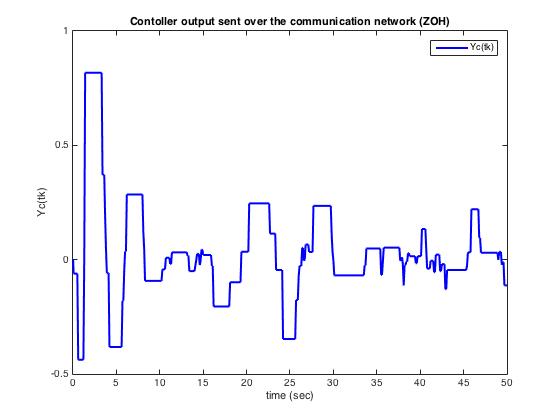}
		\caption{Simulation results for example 7.}
		\label{fig:NCSYpEpYcEcEX1Yc}
	\end{figure}
	\begin{figure}[!t]
		\centering
		\includegraphics[scale = .5]{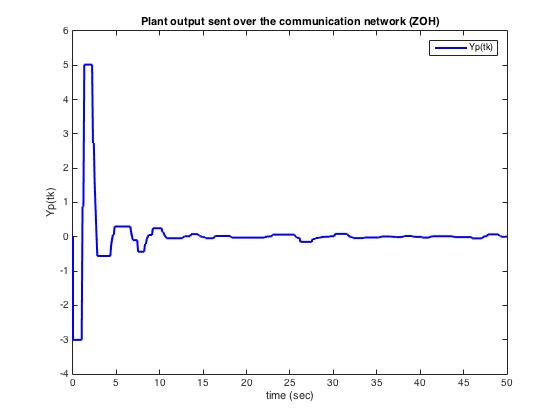}
		\caption{Simulation results for example 7.}
		\label{fig:NCSYpEpYcEcEX1Yp}
	\end{figure}
\end{example}
\begin{example}
	The plant is defined by the following transfer function:
	
	\begin{align*} 
	&G_p(s)=\frac{0.05s^2+2.1s+1.1}{s^2+2s+2}
	\end{align*}
	where $\rho_p=.80$ and $\nu_p=0.02$. And the model for the controller is the following:
	\begin{align*} 
	&\dot{x}_{c1}(t)=-x_{c1}(t)+u_c(t)\\
	&y_c(t)=-0.5x_{c1}(t)+1\\,
	\end{align*}
	where $\rho_c=1$ and $\nu_c=0.5$. The interconnection satisfies $\rho_c-\nu_p-\delta_c(1+\nu_p)-\frac{1}{4}>0$ and $\rho_p-\nu_c-\delta_p(1+\nu_c)-\frac{1}{4}>0$ by picking $\delta_p=0.02$ and $\delta_c=0.7$. $w1$ is a step signal, and $w2$ is white noise with power $0.02$, the simulation results are given in: figures  \ref{fig:NCSYpEpYcEcEX2Y}, \ref{fig:NCSYpEpYcEcEX2Yp},\ref{fig:NCSYpEpYcEcEX2Yc}.
	\begin{figure}[!t]
		\centering
		\includegraphics[scale = .5]{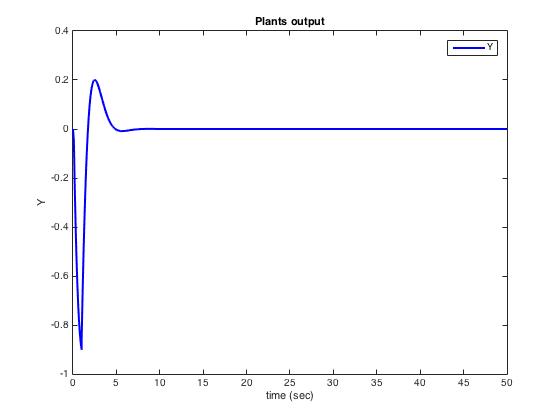}
		\caption{Simulation results for example 8.}
		\label{fig:NCSYpEpYcEcEX2Y}
	\end{figure}
	\begin{figure}[!t]
		\centering
		\includegraphics[scale = .5]{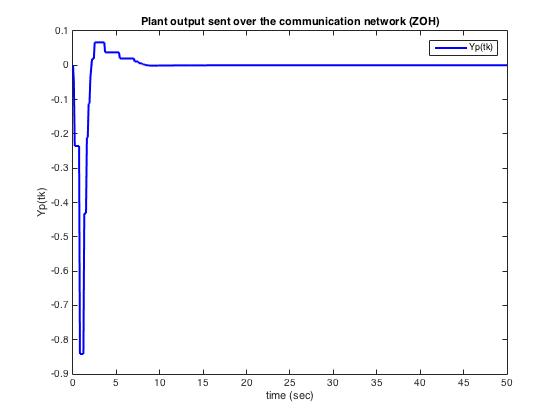}
		\caption{Simulation results for example 8.}
		\label{fig:NCSYpEpYcEcEX2Yp}
	\end{figure}
	\begin{figure}[!t]
		\centering
		\includegraphics[scale = .5]{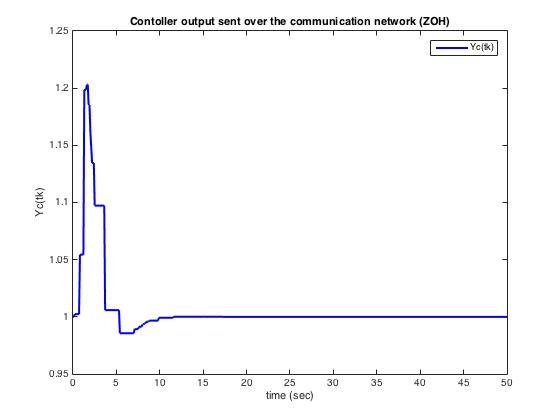}
		\caption{Simulation results for example 8.}
		\label{fig:NCSYpEpYcEcEX2Yc}
	\end{figure}
\end{example}
\subsubsection{Calculating Passivity indices for figure \ref{fig:ETNCSPSCS}  (Event-triggering condition on the Plant's Output and Controller's Output)}
\begin{theorem}

	The networked control system given in figure \ref{fig:ETNCSPSCS} where $G_p$ and $G_c$ have passivity indices $\nu_p$, $\rho_p$, $\nu_c$, and $\rho_c$ and the triggering instances $t_{p_k}$ and $t_{c_k}$ are explicitly determined by the conditions $||e_p(t)||_2^2>\delta_p ||y_p(t)||_2^2$ with $\delta_p  \in(0,1]$, and $||e_c(t)||_2^2>\delta_c ||y_c(t)||_2^2$ with $\delta_c  \in(0,1]$, is passive from the inputs $\begin{bmatrix}
	w_1(t) \\ w_2(t)      
	\end{bmatrix}$ to the outputs
	$\begin{bmatrix}
	y_p(t) \\ y_c(t)      
	\end{bmatrix}$ with input passivity index $\epsilon_0$ and output passivity index $\delta_0$:\\
	
	$\dot{V}(t) \leq 
	\begin{bmatrix}
	w^T_1(t) & w^T_2(t)      
	\end{bmatrix}
	\begin{bmatrix}
	y_p(t) \\ y_c(t)      
	\end{bmatrix}
	- \epsilon_0 
	\begin{bmatrix}
	w^T_1(t) & w^T_2(t)      
	\end{bmatrix}
	\begin{bmatrix}
	w_1(t) \\ w_2(t)      
	\end{bmatrix}
	-\delta_0 
	\begin{bmatrix}
	y_p^T(t) & y_c^T(t)      
	\end{bmatrix}
	\begin{bmatrix}
	y_p(t) \\ y_c(t)      
	\end{bmatrix}$,
	
	where:
	\begin{align*}
	&\epsilon_0 < \min(\nu_c-|\nu_c|,\nu_p-|\nu_p|)\\
	&\delta_0 < \min(\beta(\nu_c)-\frac{1}{4}-\frac{\nu_c^2}{\nu_c-|\nu_c|-\epsilon_0},\beta(\nu_p)-\frac{1}{4}-\frac{\nu_p^2}{\nu_p-|\nu_p|-\epsilon_0}).
	\end{align*}
	
\end{theorem}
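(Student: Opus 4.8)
The plan is to mirror the argument used for the single-sided index theorems (Theorems 2 and 6), but now starting from the two-sided QSR-dissipativity bound established in Theorem 8. That theorem supplies the matrices $S$, $R$, $Q$ together with the inequality $\dot V(t) \le w^T R w + 2 w^T S y + y^T Q y$, and the goal is to certify the passivity-type bound
\[
\dot V(t) \le w_1^T y_p + w_2^T y_c - \epsilon_0\,(w_1^T w_1 + w_2^T w_2) - \delta_0\,(y_p^T y_p + y_c^T y_c).
\]
First I would expand the cross term as $2 w^T S y = w_1^T y_p + w_2^T y_c + 2\nu_p w_1^T y_c - 2\nu_c w_2^T y_p$ and substitute the diagonal $R$ and $Q$ from Theorem 8. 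Subtracting the target certificate from the QSR bound cancels the $w_1^T y_p$ and $w_2^T y_c$ terms on both sides, leaving the requirement that a pure quadratic form in $(w,y)$ be nonpositive.

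The crucial structural observation is that the only surviving cross-terms are $2\nu_p w_1^T y_c$ and $-2\nu_c w_2^T y_p$, so that the quadratic form \emph{decouples} into one block in the variables $(w_1, y_c)$ and an independent block in $(w_2, y_p)$:
\[
\begin{bmatrix} w_1^T & y_c^T \end{bmatrix} M \begin{bmatrix} w_1 \\ y_c \end{bmatrix} + \begin{bmatrix} w_2^T & y_p^T \end{bmatrix} N \begin{bmatrix} w_2 \\ y_p \end{bmatrix} \le 0,
\]
where
\[
M = \begin{bmatrix} \epsilon_0 - (\nu_p - |\nu_p|) & \nu_p \\ \nu_p & -\bigl(\beta(\nu_p) - \tfrac14 - \delta_0\bigr) \end{bmatrix}, \qquad N = \begin{bmatrix} \epsilon_0 - (\nu_c - |\nu_c|) & -\nu_c \\ -\nu_c & -\bigl(\beta(\nu_c) - \tfrac14 - \delta_0\bigr) \end{bmatrix}.
\]

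Then I would impose $M \preceq 0$ and $N \preceq 0$. For each symmetric $2\times 2$ block this reduces to nonpositivity of the two diagonal entries together with a nonnegative determinant. Nonpositivity of the $(1,1)$ entries gives $\epsilon_0 \le \nu_p - |\nu_p|$ and $\epsilon_0 \le \nu_c - |\nu_c|$, hence the stated bound on $\epsilon_0$. The determinant (Schur-complement) conditions $(\nu_p - |\nu_p| - \epsilon_0)\bigl(\beta(\nu_p) - \tfrac14 - \delta_0\bigr) \ge \nu_p^2$ and $(\nu_c - |\nu_c| - \epsilon_0)\bigl(\beta(\nu_c) - \tfrac14 - \delta_0\bigr) \ge \nu_c^2$, once divided through, yield exactly the two terms inside the $\min$ defining the upper bound on $\delta_0$.

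The computation is routine; the only point needing care is the sign/denominator bookkeeping, which is the same subtlety that appeared in Theorem 2. Since $\nu_p - |\nu_p| \le 0$ and $\nu_c - |\nu_c| \le 0$ (both vanishing precisely when the corresponding index is nonnegative), the admissible $\epsilon_0$ is always nonpositive, and one must keep $\epsilon_0$ \emph{strictly} below these quantities so that the Schur denominators $\nu_p - |\nu_p| - \epsilon_0$ and $\nu_c - |\nu_c| - \epsilon_0$ remain strictly positive; otherwise the division in the determinant inequality is illegitimate and the bound degenerates. No genuinely new obstacle arises beyond carrying the extra $-\tfrac14$ shifts coming from the two-sided $Q$ matrix through the determinant conditions.
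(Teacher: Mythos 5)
Your proposal follows essentially the same route as the paper's proof: start from the two-sided QSR bound, cancel the $w^T y$ terms, decouple the residual quadratic form into the $(w_1,y_c)$ and $(w_2,y_p)$ blocks, and impose negative semidefiniteness via the diagonal-entry and determinant (Schur complement) conditions. Your pairing of the determinant inequalities ($\nu_p^2$ with the $\nu_p-|\nu_p|-\epsilon_0$ and $\beta(\nu_p)$ block, $\nu_c^2$ with the $\nu_c$ block) is the consistent one and agrees with the theorem statement, whereas the paper's proof lists these two inequalities with the subscripts swapped, evidently a typo.
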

\begin{proof}
	We want to calculate the passivity indices for the setup in figure \ref{fig:ETNCSPSCS} from the inputs $[w_1(t)~ w_2(t)]^T$ to outputs $[y_p(t)~ y_c(t)]^T$. We know that the setup is \emph{QSR-disspative} such that:
	
	\begin{align*}
	&\dot{V}(t) \leq w^T(t)Rw(t)+2w^T(t)Sy(t)+y^T(t)Qy(t)
	\end{align*}
	
	where	$S=\begin{bmatrix}
	\frac{1}{2}I & \nu_pI\\
	-\nu_cI & \frac{1}{2}I      
	\end{bmatrix}$, $R=\begin{bmatrix}
	-(\nu_p-|\nu_p|)I & 0I\\
	0I & -(\nu_c-|\nu_c|)I      
	\end{bmatrix}$, and $Q=\begin{bmatrix}
	-(\beta(\nu_c)-\frac{1}{4})I & 0I\\
	0I & -(\beta(\nu_p)-\frac{1}{4})I\end{bmatrix}$
	and:\\
	
	\[\beta(\nu_p) =
	\begin{cases}
	\rho_c-\nu_p-\delta_c(1+\nu_p)      & \quad \text{if } \nu_p > 0\\
	\rho_c-\delta_c       & \quad \text{if } \nu_p = 0\\
	\rho_c+2\nu_p-\delta_c(1-3\nu_p)  & \quad \text{if } \nu_p < 0\\
	\end{cases}
	\]
	
	\[\beta(\nu_c) =
	\begin{cases}
	\rho_p-\nu_c-\delta_p(1+\nu_c)      & \quad \text{if } \nu_c > 0\\
	\rho_p-\delta_p       & \quad \text{if } \nu_c = 0\\
	\rho_p+2\nu_c-\delta_p(1-3\nu_c)  & \quad \text{if } \nu_c < 0\\
	\end{cases}
	\]\\
	
	we need to show that \\

	$\dot{V}(t) \leq w^T(t)Rw(t)+2w^T(t)Sy(t)+y^T(t)Qy(t)\\~~~~~~~~~~~~ \leq 
	\begin{bmatrix}
	w^T_1(t) & w^T_2(t)      
	\end{bmatrix}
	\begin{bmatrix}
	y_p(t) \\ y_c(t)      
	\end{bmatrix}
	- \epsilon_0 
	\begin{bmatrix}
	w^T_1(t) & w^T_2(t)      
	\end{bmatrix}
	\begin{bmatrix}
	w_1(t) \\ w_2(t)      
	\end{bmatrix}
	-\delta_0 
	\begin{bmatrix}
	y_p^T(t) & y_c^T(t)      
	\end{bmatrix}
	\begin{bmatrix}
	y_p(t) \\ y_c(t)      
	\end{bmatrix}$~~~(20)

	and calculate the passivity indices $\epsilon_0$, and $\delta_0$.\\
	
	Simplifying (20) and moving the terms to one side we have: \\
	
	$\dot{V}(t)\leq  -(\nu_c-|\nu_c|-\epsilon_0)w_2^T(t)w_2(t)-(\nu_p-|\nu_p|-\epsilon_0) w_1^T(t)w_1(t)-(\beta(\nu_p)-\frac{1}{4}-\delta_0)y^T_c(t)y_c(t)\\~~~~~~~~~~~~-(\beta(\nu_c)-\frac{1}{4}-\delta_0)y^T_p(t)y_p(t)+2\nu_pw^T_1(t)y_c(t)-2\nu_cw^T_2(t)y_p(t)\\~~~~~~~~~~~\leq 0. $\\
	
	So we need to show:
	
	$\begin{bmatrix}
	w^T_1(t) & y_c^T(t)      
	\end{bmatrix} M
	\begin{bmatrix}
	w_1(t) \\ y_c(t)      
	\end{bmatrix} + \begin{bmatrix}
	w^T_2(t) & y^T_p(t)      \end{bmatrix} N \begin{bmatrix}
	w_2(t) \\ y_p(t)      \end{bmatrix} \leq 0$\\

	where  $M=\begin{bmatrix}
	-(\nu_p-|\nu_p|-\epsilon_0) & \nu_p\\
	\nu_p & -(\beta(\nu_p)-\frac{1}{4}-\delta_0)
	\end{bmatrix}$ and  $N=\begin{bmatrix}
	-(\nu_c-|\nu_c|-\epsilon_0) & -\nu_c\\
	-\nu_c&-(\beta(\nu_c)-\frac{1}{4}-\delta_0) \end{bmatrix}$.\\ 
	
	For matrices M and N to be negative semi-definite, they need to meet the following conditions:
	
	\begin{align*}
	\epsilon_0 < \nu_c-|\nu_c|\\
	\epsilon_0 <  \nu_p-|\nu_p|\\
	\delta_0 < \beta(\nu_p)-\frac{1}{4}\\
	\delta_0 < \beta(\nu_c)-\frac{1}{4}\\
	\nu_p^2\leq (\nu_c-|\nu_c|-\epsilon_0)(\beta(\nu_c)-\frac{1}{4}-\delta_0)\\
	\nu_c^2\leq (\nu_p-|\nu_p|-\epsilon_0)(\beta(\nu_p)-\frac{1}{4}-\delta_0)
	\end{align*}
	
	which also prove the theorem.
\end{proof}
\subsubsection{Passivity for Figure \ref{fig:ETNCSPSCS} from $w_1 \to y_p$ (Event-triggering conditions on the Plant's Output and Controller's Output)}
\begin{theorem}
	
	The networked control system given in figure \ref{fig:ETNCSPSCS} where $G_p$ and $G_c$ have passivity indices $\nu_p$, $\rho_p$, $\nu_c$, and $\rho_c$ and triggering instances $t_{p_k}$ and $t_{c_k}$ are explicitly determined by the conditions $||e_p(t)||_2^2>\delta_p ||y_p(t)||_2^2$ with $\delta_p  \in(0,1]$, and $||e_c(t)||_2^2>\delta_c ||y_c(t)||_2^2$ with $\delta_c  \in(0,1]$, and $w_2=0$, is passive from $w_1 \to y_p$ meaning:\\
	\begin{align*}
	\dot{V}(t) \leq w_1^T(t)y_p(t)
	\end{align*}
	if:
	\begin{align*}
	\beta(\nu_p) \geq \frac{1}{4}\\
	\nu_p = 0\\
	\beta(\nu_c) \geq \frac{1}{4}\\
	\end{align*}	  
\end{theorem}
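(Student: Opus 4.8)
The plan is to piggyback on the \emph{QSR}-dissipativity characterization already obtained for the two-sided triggering configuration of figure \ref{fig:ETNCSPSCS}. That preceding result gives a supply-rate bound $\dot{V}(t) \leq w^T(t)Rw(t)+2w^T(t)Sy(t)+y^T(t)Qy(t)$ with the block matrices $S$, $R$, $Q$ stated there, so I would not re-derive the dissipation inequality from the storage functions $V_p$ and $V_c$ or revisit the error substitutions; instead I would simply specialize it under the additional hypotheses.

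First I would set $w_2(t)=0$ and expand the three quadratic forms block by block. Writing $w=[w_1^T,\,0]^T$ and $y=[y_p^T,\,y_c^T]^T$, the (anti-)diagonal structure of $R$, $S$, $Q$ collapses the right-hand side to
\begin{align*}
\dot{V}(t)\leq{}& w_1^T(t)y_p(t)-(\nu_p-|\nu_p|)w_1^T(t)w_1(t)+2\nu_p w_1^T(t)y_c(t)\\
&-\Big(\beta(\nu_c)-\tfrac{1}{4}\Big)y_p^T(t)y_p(t)-\Big(\beta(\nu_p)-\tfrac{1}{4}\Big)y_c^T(t)y_c(t).
\end{align*}
The lone term $w_1^T(t)y_p(t)$ is exactly the passive supply rate we are aiming for, so the remaining task is to show that the residual collecting the other four terms is nonpositive for all $w_1$, $y_p$, $y_c$.

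Next I would invoke the three hypotheses in turn. Setting $\nu_p=0$ immediately annihilates the input term, since $\nu_p-|\nu_p|=0$, and also kills the indefinite cross term $2\nu_p w_1^T(t)y_c(t)$; this is the step that plays the role of the negative-semidefinite $M$-matrix argument used in the one-sided theorems, but here it is trivial because no coupling between $w_1$ and $y_c$ survives. What is left is $-(\beta(\nu_c)-\tfrac{1}{4})y_p^T(t)y_p(t)-(\beta(\nu_p)-\tfrac{1}{4})y_c^T(t)y_c(t)$, and the conditions $\beta(\nu_c)\geq\tfrac{1}{4}$ and $\beta(\nu_p)\geq\tfrac{1}{4}$ make each coefficient nonnegative, so both terms are nonpositive. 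Chaining these bounds yields $\dot{V}(t)\leq w_1^T(t)y_p(t)$, the claimed passivity from $w_1\to y_p$.

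I expect no genuine obstacle here: unlike the analogous one-sided passivity result for figure \ref{fig:ETNCSPS}, where the requirement $\nu_p>0$ forced a completion-of-squares / negative-semidefiniteness argument to absorb the $w_1$--$y_c$ coupling, the hypothesis $\nu_p=0$ in this two-sided setting removes that coupling outright. The only care needed is the bookkeeping of the block expansion and confirming that, once $\nu_p=0$ is imposed, no $y_c$-dependent cross term remains to spoil the sign of the residual.
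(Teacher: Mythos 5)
Your proposal is correct and follows essentially the same route as the paper: specialize the QSR-dissipativity bound for figure \ref{fig:ETNCSPSCS} to $w_2=0$, isolate the supply term $w_1^T(t)y_p(t)$, and show the residual is nonpositive under the stated hypotheses. The only cosmetic difference is that the paper first absorbs the cross term via $2\nu_p w_1^T(t)y_c(t)\leq |\nu_p|y_c^T(t)y_c(t)+|\nu_p|w_1^T(t)w_1(t)$ before imposing $\nu_p=0$, whereas you impose $\nu_p=0$ directly to kill it; both reach the same conclusion.
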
	

\begin{proof}
	We know that the setup is \emph{QSR-disspative} such that:
	
	\begin{align*}
	&\dot{V}(t) \leq w^T(t)Rw(t)+2w^T(t)Sy(t)+y^T(t)Qy(t)
	\end{align*}
	
	where $w(t)=\begin{bmatrix}
	w_1(t) \\ w_2(t)   
	\end{bmatrix}$,
	$y(t)=\begin{bmatrix}
	y_p(t) \\ y_c(t)   
	\end{bmatrix}$ and:\\
	
	$S=\begin{bmatrix}
	\frac{1}{2}I & \nu_pI\\
	-\nu_cI & \frac{1}{2}I      
	\end{bmatrix}$, $R=\begin{bmatrix}
	-(\nu_p-|\nu_p|)I & 0I\\
	0I & -(\nu_c-|\nu_c|)I      
	\end{bmatrix}$, and $Q=\begin{bmatrix}
	-(\beta(\nu_c)-\frac{1}{4})I & 0I\\
	0I & -(\beta(\nu_p)-\frac{1}{4})I\end{bmatrix}$
	and:\\
	
	\[\beta(\nu_p) =
	\begin{cases}
	\rho_c-\nu_p-\delta_c(1+\nu_p)      & \quad \text{if } \nu_p > 0\\
	\rho_c-\delta_c       & \quad \text{if } \nu_p = 0\\
	\rho_c+2\nu_p-\delta_c(1-3\nu_p)  & \quad \text{if } \nu_p < 0\\
	\end{cases}
	\]
	
	\[\beta(\nu_c) =
	\begin{cases}
	\rho_p-\nu_c-\delta_p(1+\nu_c)      & \quad \text{if } \nu_c > 0\\
	\rho_p-\delta_p       & \quad \text{if } \nu_c = 0\\
	\rho_p+2\nu_c-\delta_p(1-3\nu_c)  & \quad \text{if } \nu_c < 0\\
	\end{cases}
	\]
	Given that $w_2=0$, and using the relation $2\nu_pw_1^T(t)y_c(t) \leq |\nu_p|y_c^T(t)y_c(t)+|\nu_p|w_1^T(t)w_1(t)$:  \\
	
	$\dot{V}(t)\leq -(\nu_p-|\nu_p|) w_1^T(t)w_1(t)-(\beta(\nu_p)-\frac{1}{4})y^T_c(t)y_c(t)-(\beta(\nu_c)-\frac{1}{4})y^T_p(t)y_p(t)+2\nu_pw^T_1(t)y_c(t)\\~~~~~~~~~~~~\leq-(\nu_p-2|\nu_p|) w_1^T(t)w_1(t)-(\beta(\nu_p)-\frac{1}{4}-|\nu_p|)y^T_c(t)y_c(t)-(\beta(\nu_c)-\frac{1}{4})y^T_p(t)y_p(t)\\~~~~~~~~~~~~ \leq 0 $

	it is easy to see that the system is passive if:
	\begin{align*}
	\beta(\nu_p) \geq \frac{1}{4}\\
	\nu_p = 0\\
	\beta(\nu_c) \geq \frac{1}{4}\\
	\end{align*}	 
\end{proof}
\subsubsection{Passivity and Passivity indices for Figure \ref{fig:ETNCSPSCS} from $w_1 \to y_p$ (Event-triggering conditions on the Plant's Output and Controller's Output)}
\begin{theorem}
	
	The networked control system given in figure \ref{fig:ETNCSPSCS} where $G_p$ and $G_c$ have passivity indices $\nu_p$, $\rho_p$, $\nu_c$, and $\rho_c$ and triggering instances $t_{p_k}$ and $t_{c_k}$ are explicitly determined by the conditions $||e_p(t)||_2^2>\delta_p ||y_p(t)||_2^2$ with $\delta_p  \in(0,1]$, and $||e_c(t)||_2^2>\delta_c ||y_c(t)||_2^2$ with $\delta_c  \in(0,1]$, and $w_2=0$, is passive from $w_1 \to y_p$ if $\nu_p=0$ with input passivity index $\epsilon_0$ and output passivity index $\delta_0$ meaning:\\
	\begin{align*}
	\dot{V}(t) \leq w_1^T(t)y_p(t)- \epsilon_0 w^T_1(t)w_1(t) -\delta_0 y_p^T(t)y_p(t) 
	\end{align*}
	
	where
	\begin{align*}
	\epsilon_0 =0 \\
	0\leq \delta_0 \leq \beta(\nu_c) - \frac{1}{4}\\\\
	\end{align*}	 
\end{theorem}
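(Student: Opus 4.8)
The plan is to specialize the QSR-dissipativity inequality already proved for the two-sided configuration of figure \ref{fig:ETNCSPSCS} to the present hypotheses $w_2=0$ and $\nu_p=0$, and then to read off the passivity indices by matching the resulting bound against the target IF-OFP inequality. I may therefore take as given the relation $\dot{V}(t)\le w^T(t)Rw(t)+2w^T(t)Sy(t)+y^T(t)Qy(t)$ with the matrices $R$, $S$, $Q$ and the functions $\beta(\nu_p)$, $\beta(\nu_c)$ stated in the corresponding QSR theorem for this setup.

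First I would set $w_2=0$, so that $w(t)=[\,w_1^T(t)\;\;0\,]^T$. Expanding the three quadratic forms, the term $2w^TSy$ reduces to $w_1^T y_p+2\nu_p w_1^T y_c$, the term $w^TRw$ reduces to $-(\nu_p-|\nu_p|)w_1^Tw_1$, and $y^TQy=-(\beta(\nu_c)-\tfrac14)y_p^Ty_p-(\beta(\nu_p)-\tfrac14)y_c^Ty_c$. This produces a bound on $\dot{V}$ written purely in the three signals $w_1$, $y_p$, $y_c$.

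Next I would impose $\nu_p=0$, which is the decisive step. It annihilates the input term $(\nu_p-|\nu_p|)w_1^Tw_1=0$ and, more importantly, the coupling term $2\nu_p w_1^T y_c=0$; in the general case this cross term must be dominated via a Young-type inequality (as in the companion non-indexed passivity theorem) and is precisely what would otherwise force a nonzero input index. With $\nu_p=0$ the bound collapses to $\dot{V}(t)\le w_1^T y_p-(\beta(\nu_c)-\tfrac14)y_p^Ty_p-(\beta(\nu_p)-\tfrac14)y_c^Ty_c$, with no surviving $w_1^Tw_1$ term, which immediately suggests the choice $\epsilon_0=0$.

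Finally I would compare this with the claimed inequality $\dot{V}(t)\le w_1^T y_p-\epsilon_0 w_1^Tw_1-\delta_0 y_p^Ty_p$. Subtracting the target from the bound, I need $(\delta_0-\beta(\nu_c)+\tfrac14)y_p^Ty_p-(\beta(\nu_p)-\tfrac14)y_c^Ty_c\le 0$ for all $y_p,y_c$. Because the two quadratics are decoupled, this holds exactly when the $y_p$ coefficient is nonpositive, i.e. $\delta_0\le\beta(\nu_c)-\tfrac14$, and the $y_c$ coefficient is nonpositive, i.e. $\beta(\nu_p)\ge\tfrac14$ (the latter, inherited from the companion passivity theorem, is what lets the $y_c$ contribution be dropped). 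Adjoining $\delta_0\ge 0$ yields the stated bounds $\epsilon_0=0$ and $0\le\delta_0\le\beta(\nu_c)-\tfrac14$. The only genuine subtlety is justifying that the $y_c$ term may be discarded, which is immediate once $\beta(\nu_p)\ge\tfrac14$; the remainder is routine bookkeeping of a diagonal quadratic form.
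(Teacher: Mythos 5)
Your proposal is correct and follows essentially the same route as the paper: specialize the two-sided QSR-dissipativity bound to $w_2=0$, eliminate the $2\nu_p w_1^T y_c$ cross term (the paper bounds it by Young's inequality before imposing $\nu_p=0$, you set $\nu_p=0$ first so it vanishes outright), and read off $\epsilon_0=0$ and $0\le\delta_0\le\beta(\nu_c)-\tfrac14$ from the signs of the decoupled diagonal coefficients. You also correctly identify, as the paper's proof does, that $\beta(\nu_p)\ge\tfrac14$ is needed to discard the $y_c$ term even though it is not listed among the theorem's stated hypotheses.
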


\begin{proof}
	We know that the setup is \emph{QSR-disspative} such that:
	
	\begin{align*}
	&\dot{V}(t) \leq w^T(t)Rw(t)+2w^T(t)Sy(t)+y^T(t)Qy(t)
	\end{align*}
	
	where $w(t)=\begin{bmatrix}
	w_1(t) \\ w_2(t)   
	\end{bmatrix}$,
	$y(t)=\begin{bmatrix}
	y_p(t) \\ y_c(t)   
	\end{bmatrix}$ and:\\
	
	$S=\begin{bmatrix}
	\frac{1}{2}I & \nu_pI\\
	-\nu_cI & \frac{1}{2}I      
	\end{bmatrix}$, $R=\begin{bmatrix}
	-(\nu_p-|\nu_p|)I & 0I\\
	0I & -(\nu_c-|\nu_c|)I      
	\end{bmatrix}$, and $Q=\begin{bmatrix}
	-(\beta(\nu_c)-\frac{1}{4})I & 0I\\
	0I & -(\beta(\nu_p)-\frac{1}{4})I\end{bmatrix}$
	and:\\
	
	\[\beta(\nu_p) =
	\begin{cases}
	\rho_c-\nu_p-\delta_c(1+\nu_p)      & \quad \text{if } \nu_p > 0\\
	\rho_c-\delta_c       & \quad \text{if } \nu_p = 0\\
	\rho_c+2\nu_p-\delta_c(1-3\nu_p)  & \quad \text{if } \nu_p < 0\\
	\end{cases}
	\]
	
	\[\beta(\nu_c) =
	\begin{cases}
	\rho_p-\nu_c-\delta_p(1+\nu_c)      & \quad \text{if } \nu_c > 0\\
	\rho_p-\delta_p       & \quad \text{if } \nu_c = 0\\
	\rho_p+2\nu_c-\delta_p(1-3\nu_c)  & \quad \text{if } \nu_c < 0\\
	\end{cases}
	\]
	Given that $w_2=0$, and using the relation $2\nu_pw_1^T(t)y_c(t) \leq |\nu_p|y_c^T(t)y_c(t)+|\nu_p|w_1^T(t)w_1(t)$: \\

	$\dot{V}(t)\leq -(\nu_p-|\nu_p|-\epsilon_0) w_1^T(t)w_1(t)-(\beta(\nu_p)-\frac{1}{4})y^T_c(t)y_c(t)-(\beta(\nu_c)-\frac{1}{4}-\delta_0)y^T_p(t)y_p(t)\\~~~~~~~~~~~+2\nu_pw^T_1(t)y_c(t)\\~~~~~~~~~~~\leq-(\nu_p-2|\nu_p|-\epsilon_0) w_1^T(t)w_1(t)-(\beta(\nu_p)-\frac{1}{4}-|\nu_p|)y^T_c(t)y_c(t)-(\beta(\nu_c)-\frac{1}{4}-\delta_0)y^T_p(t)y_p(t)\\~~~~~~~~~~~ \leq 0 $
	
	which is met if:
	\begin{align*}
	\beta(\nu_p) \geq \frac{1}{4}\\
	\epsilon_0=0\\
	\nu_p =0\\
	\beta(\nu_c) - \frac{1}{4}\geq \delta_0
	\end{align*}	
	
\end{proof}
\begin{example}
	For the feedback interconnection given in example 7, we know that $\rho_p=0.9$, $\nu_p=0$ and $\rho_c=1$ and $\nu_c=0.3$. By choosing $\delta_p = 0.55$, and $\delta_c=0.55$ all conditions given in theorem 12 hold, namely $\beta(\nu_p) \geq \frac{1}{4}$, $\epsilon_0=0$ and $\nu_p =0$. Accordingly $\delta_0 \leq 0.20 $ so we can calculate $\epsilon_0=0$, and $\delta_0 = 0.2$. Figure \ref{fig:PassivityYpEpYcEc} shows that the system is passive with output passivity index $\delta_0=0.20$ and input passivity index $\epsilon_0=0$ and the relation $\int_{0}^{T}(w_1^T(t)y_p(t)-0.2y_p^T(t)y_p(t)) dt$ holds.
	
	\begin{figure}[!t]
		\centering
		\includegraphics[scale = .4]{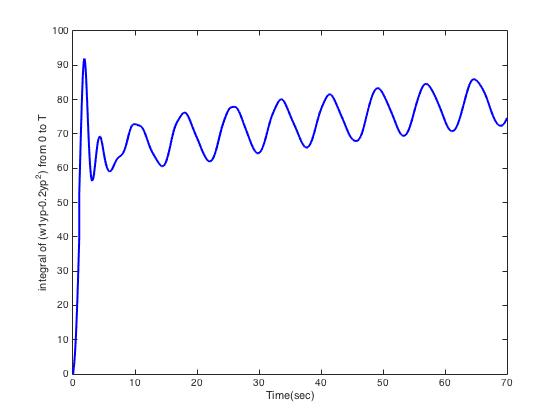}
		\caption{Simulation result for example 9 where $w_1=sin(\frac{5\pi}{2}t)+3$ .}
		\label{fig:PassivityYpEpYcEc}
	\end{figure}
\end{example}

\section{Conclusion}
\label{sec:con}
The work in this report was motivated by the belief that \emph{QSR-dissipativity} and passivity based control can address many challenges rising from the increasing demand for more scalable systematic control processes for large-scale cyber-physical systems. CPS are usually run remotely through network channels and require decentralized control methodologies rendering networked control schemes as great candidates for their design. Therefore, our work sought to bring together the control of CPS and NCS by utilizing passivity, and \emph{QSR-dissipativity} as a unifying control force. We believe that innate properties of \emph{QSR-dissipativity} and passivity such as high compositionality and their ability to bring stability to systems under simple conditions can solve challenges such as expandability, stability, robustness and reliability in networked control cyber-physical systems. Our design intended to solve one of the main issues in NCS, namely channel utilization by reducing the communication rate amongs sub-systems.  In our work, we showed \emph{QSR-dissipativity}, passivity and $L_2$-stability requirements for different event-triggered NCS platforms based on simple triggering conditions. Our work and design methodology considerably decrease the system's reliance on network channels and information exchange with controllers and other sub-units in NCS. We intend to expand our results in the future by working on passivation, data loss, and delays in NCS. Additionally, we would like to analyze networked control structures and their properties through approximating each sub-units' output and its respective event-detector.
\clearpage
\bibliographystyle{IEEEtran}
\bibliography{bibfile}	 
\end{document}